\documentclass[11pt]{amsart}

\makeatletter
\let\@wraptoccontribs\wraptoccontribs
\makeatother

\usepackage{amsmath}
\usepackage{amssymb}
\usepackage{amsthm}
\usepackage{amsfonts}
\usepackage{mathrsfs}
\usepackage{latexsym}
\usepackage{hyperref} 
\usepackage{enumerate}
\usepackage[all]{xy}  
\usepackage{color}
\usepackage{comment}

\usepackage{amssymb,amsmath,amscd,url,geometry,pdflscape}
\usepackage{verbatim,url}

\usepackage{marginnote}
\usepackage{comment} 

\setlength{\unitlength}{1cm}
\setlength{\topmargin}{0cm}
\setlength{\textheight}{22cm}
\setlength{\oddsidemargin}{1cm}
\setlength{\textwidth}{14cm}
\setlength{\voffset}{-1cm}

\numberwithin{equation}{section}

\newtheorem{thm}[equation]{Theorem}
\newtheorem{cor}[equation]{Corollary}
\newtheorem{lem}[equation]{Lemma}
\newtheorem{prop}[equation]{Proposition}
\newtheorem{defn}[equation]{Definition}

\newtheorem{rem}[equation]{Remark}

\def\LLC{\mathrm{LLC}}
\def\HII{\mathrm{HII}}

\def\bd{\mathrm{bd}}

\def\Irr{\mathrm{Irr}}

\def\cI{\mathcal{I}}
\def\cP{\mathcal{P}}

\def\cW{\mathcal{W}}

\def\fo{\mathfrak{o}}
\def\fp{\mathfrak{p}}

\def\fR{\mathfrak{R}}
\def\fe{\mathfrak{e}}
\def\fw{\mathfrak{w}}

\def\Ad{\mathrm{Ad}}
\def\ad{\mathrm{ad}}
\def\gen{\mathrm{g}}
\def\sr{\mathrm{sr}}
\def\sc{\mathrm{sc}}
\def\card{\mathrm{card}}

\def\rH{\mathrm{H}}
\def\rO{\mathrm{O}}
\def\rZ{\mathrm{Z}}
\def\Ru{{\mathrm{R}_{\mathrm u}}}

\def\dim{\mathrm{dim}}
\def\Gal{\mathrm{Gal}}
\def\Hom{\mathrm{Hom}}
\def\H{\mathrm{H}}

\def\Ind{\mathrm{Ind}}
\def\rI{\mathrm{I}}

\def\Nor{\mathrm{N}}

\def\dep{\mathrm{dep}}

\def\sep{\mathrm{s}}

\def\ur{\mathrm{ur}}

\def\enh{\mathrm{e}}
\def\fdeg{\mathrm{fdeg}}

\def\SO{\mathrm{SO}}

\newcommand{\isom}{\xrightarrow{\;\sim\;}}
\def\PGL{\mathrm{PGL}}

\def\GL{\mathrm{GL}}
\def\Aut{\mathrm{Aut}}

\def\R{\mathbb{R}}
\def\C{\mathbb{C}}

\def\Q{\mathbb{Q}}
\def\SL{\mathrm{SL}}
\def\Sh{\mathrm{Sh}}
\def\Int{\mathrm{Int}}
\def\Out{\mathrm{Out}}
\def\Id{\mathrm{Id}}
\def\diag{\mathrm{diag}}

\def\max{\mathrm{max}}

\def\sep{\mathrm{sep}}

\def\j{\mathbf{j}}

\def\h{\mathbf{h}}
\def\g{\mathbf{g}}
\def\M{\mathbf{M}}

\def\U{\mathbf{U}}

\def\tG{\widetilde{{G}}}
\def\bbG{\mathbb{G}}
\def\G{\mathbf{G}}
\def\M{\mathbf{M}}

\def\bG{\mathbf{G}}

\def\bZ{\mathbf{Z}}

\def\As{\mathrm{As}}
\def\rmA{\mathrm{A}}
\def\Stab{\mathrm{Stab}}

\def\W{\mathbf{W}}
\def\fo{\mathfrak{o}}
\def\fp{\mathfrak{p}}
\def\fR{\mathfrak{R}}

\def\TGur{\mathbf T^{\Gur}}
\def\TMur{\mathbf T^{\Mur}}
\def\Mur{\M}
\def\Gur{\G}
\def\SGur{\mathbf S^{\Gur}}
\def\SMur{\mathbf S^{\Mur}}
\def\UGur{\mathbf U^{\Gur}}
\def\UMur{\mathbf U^{\Mur}}
\def\xMur{x^{\Mur}}
\def\tM{\widetilde{M}}

\newcommand{\wt}{\widetilde}
\newcommand{\sC}{\mathscr{C}}
\newcommand{\sB}{\mathscr{B}}
\newcommand{\sA}{\mathscr{A}}

\newcommand{\sM}{\mathscr{M}}
\newcommand{\sR}{\mathscr{R}}
\newcommand{\sS}{\mathscr{S}}

\def\bij{{\mathscr{L}}}
\newcommand{\bR}{\mathbb{R}}
\def\a{\mathbf{a}}

\def\j{\mathbf{j}}

\def\h{\mathbf{h}}
\def\g{\mathbf{g}}
\def\a{\mathbf{a}}

\def\j{\mathbf{j}}

\def\h{\mathbf{h}}
\def\g{\mathbf{g}}
\def\u{\mathbf{u}}

\newcommand{\ra}{\rightarrow}


\newcommand{\<}{\left\langle}
\renewcommand{\>}{\right\rangle}

\setlength{\parindent}{0pt} 
\setlength{\parskip}{1ex plus 0.5ex minus 0.2ex}

\begin{document}

\title[Local Langlands correspondence and Weil-restricted groups]{Comparison of the depths on both sides of the local Langlands correspondence 
for Weil-restricted groups}

\author[A.-M. Aubert]{Anne-Marie Aubert}
\address{Institut de Math\'ematiques de Jussieu -- Paris Rive Gauche, CNRS, Sorbonne Universit\'e, Universit\'e de Paris, 
F-75005 Paris, France}
\email{anne-marie.aubert@imj-prg.fr}
\contrib[with an appendix by]{Jessica Fintzen} 
\address{Jessica Fintzen: Trinity College, Cambridge, CB2 1TQ, UK} \email{fintzen@umich.edu}
\author[R. Plymen]{Roger Plymen}
\address
{School of Mathematics, Alan Turing Building, Manchester University, Manchester M13 9PL, England}
\email{roger.j.plymen@manchester.ac.uk}

\keywords{Local field, depth, Weil--restricted groups, enhanced local Langlands correspondence}
\date{\today}
\maketitle

\begin{abstract}  
Let $E/F$ be a  finite and Galois extension of non-archimedean local fields. Let $G$ be a connected reductive group defined over $E$ and let $M: = \fR_{E/F}\, G$  be the reductive group over $F$ obtained by Weil restriction of scalars. 
We investigate  depth, and the enhanced local Langlands correspondence, in the transition from $G(E)$ to $M(F)$.   We obtain a depth-comparison  formula for Weil-restricted groups.  
\end{abstract}

\tableofcontents

\section{Introduction}   
Let $E/F$ be a  finite Galois extension of non-archimedean local fields.    
Let $G$ be a connected reductive group defined over $E$ and let $M: = \fR_{E/F}\, G$  be the reductive group over $F$ obtained by Weil restriction of scalars from $G$.   
We have an isomorphism of locally compact totally disconnected topological groups $\iota\colon G(E)\ra M(F)$ between the $E$-points of $G$ and the $F$-points of $M$.
We investigate the depth, and the enhanced local Langlands correspondence, in the transition from $G(E)$ to $M(F)$.

We denote by $\Pi(G,E)$ the smooth dual of $G(E)$, the set of equivalence classes of irreducible smooth representations of $G(E)$.   We denote by $\Phi(G,E)$ the set of $G^\vee$-conjugacy classes of Langlands parameters for $G(E)$, where $G^\vee$ is the complex dual group of $G$.   Similarly for $\Pi(M,F)$ and $\Phi(M,F)$.  


We assume that $G(E)$ admits a local Langlands correspondence, i.e. a surjective map
\[
\lambda_G \colon \Pi(G,E) \to \Phi(G,E)
\]
 which satisfies the conditions (desiderata) laid down in \cite[\S 10]{Bor}.   
The map $\lambda_M$ is defined to be the unique  map  for which the following diagram commutes
\[
\begin{CD} \Pi(M,F)@ > \lambda_M >> \Phi(M,F)\\
@V \iota^* VV                       @VV \Sh V\\
\Pi(G,E)@> \lambda_G >> \Phi(G,E)
\end{CD}
\]
where  $\Sh$ is the restriction to $\Phi(M,F)$ of the Shapiro isomorphism, as in \cite[8.4]{Bor}.

Each side of the correspondence $\lambda_G$ admits a numerical invariant, namely the \emph{depth}.   
The depth is defined in quite different ways on each side of the LLC (local Langlands correspondence), and, \emph{a priori}, there is no reason to expect that this numerical invariant will be preserved by the LLC.   
However, for $\GL_n$ and its inner forms, we do have preservation of depth, see \cite{ABPS}.  

In large residual characteristic, we also have the depth-preserving property of the LLC  for quasi-split classical groups 
and  for arbitrary unitary groups, \cite{Oi1}, \cite{Oi2}.   For tamely ramified tori, we have depth-preservation under the LLC, see Yu \cite{Yu1}.   

For inner forms  of $\SL_n$, we have depth-preservation under the LLC for essentially tame Langlands parameters \cite[Theorem 3.8]{ABPS}.

Mark Reeder discovered that, for a certain classical octahedral representation of $\SL_2(\Q_2)$, depth is not preserved by the LLC, see \cite[Example 3.5]{ABPS}.     

It is in this spirit that, in this article, we investigate  depth, and the local Langlands correspondence, in the transition from $G(E)$ to $M(F)$.

Returning to our commutative diagram, the map $\lambda_M$ is the composition of three maps, and there are therefore three separate opportunities for a change of depth. 
 In this article, for each of these maps, we record how the depth can change.   

In the bottom horizontal map, the depth change will vary from group to group: for $\GL_n$ we have preservation of depth \cite{ABPS}.   

In section~\ref{sec:Shapiro}, we prove  a depth-comparison result (Theorem \ref{depsh}) for the right vertical map $\Sh$.     The depth change will depend on the classical Hasse-Herbrand function $\varphi_{E/F}$.   

The Appendix, due to Jessica Fintzen,  is devoted to a depth-comparison result (Corollary \ref{ethm}) for the left vertical map $\iota^\ast$. The depth change will depend on the ramification index $e = e(E/F)$.

In  section~\ref{sec:LLC}, we recall the classical desiderata of Borel \cite{Bor} and Langlands \cite{Lan} for a local Langlands correspondence.   

An \textit{enhanced $\LLC$} $\lambda_G^\enh$ includes additional (refined) data that pin down the smooth irreducible representations of the $p$-adic group, i.e. divide each 
$L$-packet into singletons. It is a bijection 
\[\begin{matrix}\lambda_G^\enh\colon&\Pi(G,E)&\to&\Phi^\enh(G,E)\cr
&\pi&\mapsto&(\phi_\pi,\rho_\pi)
\end{matrix},
\]
such that  $\phi_\pi=\lambda_G(\pi)$, and where $\Phi^\enh(G,E)$ is the set of $G^\vee$-conjugacy classes of enhanced $L$-parameters (see Definition~\ref{defn:enh}), which 
satisfies several stringent conditions.  These extra conditions, which are  made precise  in Definition~\ref{defn:refined LLC},  comprise  Whittaker data,  the $\HII$ conjecture for square-integrable modulo center representations, extended endoscopic triples, transfer properties.


This leads to our main result in section~\ref{sec:LLC}.

\begin{thm} \label{intro 1}  Consider a local Langlands correspondence 
\[
\lambda_G\colon\Pi(G,E) \longrightarrow \Phi(G,E). 
\]
Then

\begin{enumerate}
\item[{\rm (i)}]   the map $\lambda_M$ defined above induces an enhanced $\LLC$ for $(M,F)$ if and only if $\lambda_G$ induces an enhanced $\LLC$ for $(G,E)$.
\item[{\rm (ii)}] if $\pi \in \Pi(M,F)$  and $\dep(\lambda_G(\iota^* \pi))  =  \kappa_\pi \cdot \dep(\iota^* \pi)$ then we have 
\[
 \dep(\lambda_M(\pi))  = \varphi_{E/F}(\kappa_\pi \cdot e \cdot \dep(\pi)),
 \]
where $e = e(E/F)$ is the ramification index of $E/F$ and $\varphi_{E/F}$ is the Hasse-Herbrand function. 
\end{enumerate}
\end{thm}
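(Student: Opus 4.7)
The plan is to prove both parts by leveraging the commutative diagram of the introduction, exploiting that both vertical maps are bijections (an isomorphism of topological groups on the representation side, Shapiro's isomorphism on the parameter side).

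For part (i), I would observe that since $\iota\colon G(E)\to M(F)$ is an isomorphism of locally compact totally disconnected groups, pullback along $\iota$ is a bijection $\iota^*\colon\Pi(M,F)\isom\Pi(G,E)$ which preserves every structural feature (irreducibility, supercuspidal support, Whittaker data, etc.). Similarly, the Shapiro map lifts to a bijection on enhanced parameters: the centralizer $Z_{M^\vee}(\phi_M)$ and $Z_{G^\vee}(\Sh(\phi_M))$ are canonically isomorphic (this is part of what makes Shapiro functorial), so the component groups match and the enhancements $\rho_\pi$ correspond unambiguously. The commutativity of the diagram then forces $\lambda_M^\enh$ to exist and satisfy the enhanced desiderata (HII for square-integrable modulo center representations, endoscopic transfer, compatibility with Whittaker data introduced in Definition~\ref{defn:refined LLC}) if and only if $\lambda_G^\enh$ does, since each condition translates across the isomorphisms on the two sides. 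The verification is therefore a diagram chase, modulo checking that Shapiro respects each ingredient of the enhanced structure (parabolic induction, cuspidal supports, endoscopic triples), which is standard functoriality.

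For part (ii), I would decompose $\lambda_M$ using the diagram. Given $\pi\in\Pi(M,F)$, commutativity gives $\Sh(\lambda_M(\pi))=\lambda_G(\iota^*\pi)$, so it suffices to compose three depth-comparison results in sequence. \textbf{Step 1}: By Corollary~\ref{ethm} of Fintzen's appendix,
\[
\dep(\iota^*\pi)=e\cdot\dep(\pi),
\]
where $e=e(E/F)$ is the ramification index. \textbf{Step 2}: By the standing hypothesis on $\pi$,
\[
\dep(\lambda_G(\iota^*\pi))=\kappa_\pi\cdot\dep(\iota^*\pi)=\kappa_\pi\cdot e\cdot\dep(\pi).
\]
\textbf{Step 3}: By Theorem~\ref{depsh}, depths across Shapiro differ by the Hasse–Herbrand function; specifically, for any $\phi_M\in\Phi(M,F)$ one has $\dep(\phi_M)=\varphi_{E/F}(\dep(\Sh(\phi_M)))$. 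Applying this to $\phi_M=\lambda_M(\pi)$ yields
\[
\dep(\lambda_M(\pi))=\varphi_{E/F}(\dep(\lambda_G(\iota^*\pi)))=\varphi_{E/F}(\kappa_\pi\cdot e\cdot\dep(\pi)),
\]
which is the claimed formula.

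The main obstacle is bookkeeping rather than substance: I need to be careful about the direction of the Hasse–Herbrand function, since Shapiro sends $\Phi(M,F)$ downward to $\Phi(G,E)$ and the depth of a parameter is read from the upper-numbering ramification filtration of $W_F$ versus $W_E$. The precise formulation of Theorem~\ref{depsh} must be invoked in the form that expresses $\dep(\phi_M)$ in terms of $\varphi_{E/F}$ applied to $\dep(\Sh(\phi_M))$, rather than its inverse $\psi_{E/F}$. Once the direction is pinned down, the rest of the argument is a straightforward composition of the three depth transformations, with the three factors $e$, $\kappa_\pi$, and $\varphi_{E/F}$ reflecting exactly the three separate opportunities for depth change in the commutative diagram flagged in the introduction.
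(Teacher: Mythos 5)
Your proposal is essentially the paper's proof, and part (ii) is an exact match: the paper proves assertion (ii) precisely by chaining Corollary~\ref{ethm}, the hypothesis on $\kappa_\pi$, Theorem~\ref{depsh}, and commutativity of the diagram~(\ref{Diagram}), with the direction of the Hasse--Herbrand function invoked exactly as you describe (from $\dep(\Sh(\phi)) = \psi_{E/F}(\dep(\phi))$ one inverts to $\dep(\phi) = \varphi_{E/F}(\dep(\Sh(\phi)))$ and applies it to $\phi = \lambda_M(\pi)$).

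For part (i) your strategy is the right one --- transport each condition of Definition~\ref{defn:refined LLC} across the two bijective vertical arrows --- but the phrase ``standard functoriality'' underestimates what the paper actually has to do for condition (5). The HII condition (1) is indeed handled by quoting \cite[Proposition~A.7]{FOS}, Whittaker genericity (2) by the identification of Whittaker data $\fw \leftrightarrow \fw_{E/F}$, cuspidality (3) by $\Sh^\enh$ preserving cuspidality \cite[Lemma~A4]{FOS}, and the bijection condition (4) by the canonical isomorphism $\sR_{\Sh(\phi)} \isom \sR_\phi$. But for the endoscopic transfer condition (5), the paper builds the extended endoscopic triple $\fe_{E/F} = (M_\fe, s_{E/F}, {}^L\eta_{E/F})$ for $(M,F)$ from a given $\fe = (G_\fe, s, {}^L\eta)$ for $(G,E)$ via a nontrivial construction based on \cite[\S1.2]{LW} (the cocycle $w_{G_\fe,M}$, the element $s_{E/F}$, the group $\sM_\fe$, the identification $M_\fe = \fR_{E/F}(G_\fe)$), checks commutativity of the diagram~(\ref{diag}), and invokes Waldspurger's observation \cite[Lemme~5.4]{Wal} that the Langlands--Shelstad transfer factors for $(G,E)$ and $(M,F)$ coincide, so that $f_\fe$ transfers $f$ if and only if $\iota^*_\fe f_\fe$ transfers $\iota^* f$. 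That chain --- together with Proposition~\ref{prop:HC} on matching the virtual characters $\Theta^r_\phi$ --- is the bulk of the paper's proof and is not a mere diagram chase.
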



As a special case, we have 

\begin{thm} \label{intro2}  If $\lambda_G$ is depth-preserving, then, for all $\pi \in \Pi(M,F)$,  we have
\[
\dep (\lambda_M (\pi)) =  \varphi_{E/F}(e \cdot \dep( \pi)).
\]   

In particular, we have
\begin{itemize}
\item $\dep(\lambda_M(\pi)) / \dep(\pi) \to 1 \quad \textrm{as} \quad \dep(\pi) \to \infty$ 
 \smallskip 
\item $\lambda_M$ is depth-preserving if and only if $E/F$ is tamely ramified,
\smallskip
\item if $E/F$ is wildly ramified then, for each  $\pi$ with $\dep(\pi)>0$,
 we have
\[
\dep(\lambda_M(\pi) ) > \dep(\pi).
\]
\end{itemize}
\end{thm}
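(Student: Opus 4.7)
The main formula is an immediate specialisation of Theorem \ref{intro 1}(ii). Since $\lambda_G$ is depth-preserving, for any $\pi \in \Pi(M,F)$ we have $\dep(\lambda_G(\iota^*\pi)) = \dep(\iota^*\pi)$, so one may take $\kappa_\pi = 1$ in Theorem \ref{intro 1}(ii); the degenerate case $\dep(\iota^*\pi) = 0$ is covered by $\varphi_{E/F}(0) = 0$. Substitution yields
$$\dep(\lambda_M(\pi)) = \varphi_{E/F}(e \cdot \dep(\pi)),$$
and the three bullet points thereby reduce to purely analytic statements about the Hasse--Herbrand function.

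For these three statements the plan is to work with the integral representation
$$\varphi_{E/F}(u) = \int_0^u \frac{dt}{[G_0 : G_t]} \qquad (u \ge 0),$$
where $(G_i)_{i \ge 0}$ are the lower-numbered ramification groups of $E/F$ and $|G_0| = e$. Since $[G_0 : G_t] \le e$ with equality if and only if $G_t = 1$, the integrand is bounded below by $1/e$; hence $\varphi_{E/F}(u) \ge u/e$, with equality for all $u \ge 0$ if and only if $G_1 = 1$, i.e.\ if and only if $E/F$ is tamely ramified. This proves the second bullet. When $E/F$ is wildly ramified one has $[G_0 : G_1] < e$, so the integrand strictly exceeds $1/e$ on the whole interval $(0,1]$, and therefore $\varphi_{E/F}(u) > u/e$ for every $u > 0$; applied to $u = e \cdot \dep(\pi)$ this yields the strict inequality of the third bullet.

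For the asymptotic, I would invoke the fact that the ramification filtration terminates: there is an integer $N$ with $G_i = 1$ for $i \ge N$, so
$$\varphi_{E/F}(u) = \frac{u}{e} + C \qquad \text{for all } u \ge N,$$
where $C := \varphi_{E/F}(N) - N/e$ is a non-negative constant depending only on $E/F$. Setting $u = e \cdot \dep(\pi)$ and dividing by $\dep(\pi)$ gives
$$\frac{\dep(\lambda_M(\pi))}{\dep(\pi)} = 1 + \frac{C}{\dep(\pi)} \longrightarrow 1 \quad \text{as} \quad \dep(\pi) \to \infty.$$

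None of these steps presents a real obstacle once Theorem \ref{intro 1} is in hand: the statement is essentially a dictionary translating Theorem \ref{intro 1}(ii) into classical facts about $\varphi_{E/F}$. The one point requiring small care is the substitution at $\dep(\iota^*\pi)=0$, handled trivially by $\varphi_{E/F}(0)=0$; the rest is bookkeeping with the integral formula above.
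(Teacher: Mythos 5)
Your proof is correct and follows essentially the same route as the paper's: specialize Theorem~\ref{intro 1}(ii) to $\kappa_\pi = 1$ (valid since $\lambda_G$ is depth-preserving), then appeal to elementary properties of the Hasse--Herbrand function to deduce the three bullet points. The only cosmetic difference is that the paper states these properties for the inverse function $\psi_{E/F}$ and upper-numbered ramification groups (in Section~\ref{sec:Shapiro}), whereas you work directly with $\varphi_{E/F}$ and the lower-numbered filtration $(G_i)$; these are equivalent formulations of the same facts.
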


When $G(E) = \GL_1(E)$, Theorem \ref{intro2} strengthens the  main result of \cite{MiPa} for induced tori.   For tamely ramified induced tori, we recover the depth-preservation theorem  of Yu \cite{Yu1}.    

Section~\ref{sec:App} contains some applications to automorphic induction and the Asai lift.

Let $E$ be a local field of residual characteristic $2$, let $G(E) = \SL_2(E)$.   Then $\lambda_G$ is depth-preserving if and only if $\phi$ is essentially tame \cite{AMPS}.  
Examples when $\phi$ is not essentially tame are provided in \cite{AMPS}.   They include the classical octahehedral representations, the classical tetrahedral representations, and the case when a lift of $\phi$ with minimal depth among the lifts of $\phi$ is imprimitive and totally ramified.   These examples  
all satisfy $\kappa_\pi <1$, in agreement with \cite[1.1]{AMPS}.   


We would like to thank  Masao Oi for several valuable emails which enabled us to improve sections \ref{sec:Shapiro} and \ref{sec:App}.        We would also like to express our gratitude to Jessica Fintzen for providing the appendix.

\textbf{Notation and conventions.}

Let $E/F$ be a  finite Galois extension of non-archimedean local fields, with ramification index denoted by $e(E|F)$.
We fix a separable closure $F^{\sep}=E^{\sep}$ of both $F$ and $E$. From now on all field extensions will be assumed to be contained in $F^{\sep}$. 
Let $F^{\ur}$ denote the maximal unramified extension of $F$.

Let $K$ be any non-archimedean local field contained in $F^{\sep}$.  Let $\Gamma_K$ be the absolute Galois group of $K$, let $W_K$ be the absolute Weil group of $K$, and let $W'_K$ denote the Weil-Deligne group $W_K\times\SL_2(\C)$. 

Let $H$ be a connected reductive group defined over $K$ and let $\Pi(H,K)$ denote the set of isomorphism classes of irreducible admissible representations of $H(K)$.
Let $H^\vee$ be the Langlands dual of $H$.   
Write ${}^LH:= H^\vee \rtimes W_K$.  Homomorphisms $\phi \colon W_K'\to{}^LK$ which are admissible as defined in \cite[8.2]{Bor}
are called $L$-parameters for $(K,K)$. We denote by $\Phi(H,K)$  the set of $H^\vee$-conjugacy classes of  (resp. bounded) $L$-parameters for $(H,K)$.

Let $\sB(H,K)$ be the reduced Bruhat--Tits building of $H$ over $K$. We denote by  $H(K)_{x,r}$ the Moy--Prasad filtration, where $x\in\sB(H,K)$ and $r\in\R_{\ge 0}$, as defined in \cite{MoPr1}, \cite{MoPr2}, and write $H(K)_{x,r+}:=\bigcup_{s>r} H(K)_{x,s}$.
\begin{defn} \label{defn:depth}
Let $(\pi,V_\pi)$ be an irreducible smooth representation of $H(K)$.
The depth $\dep(\pi)$ of $\pi$ the smallest nonnegative real number $r$ for which there exists an $x \in \sB(H,K)$ such that $V_\pi^{H(K)_{x,r}+}\ne 0$. 
\end{defn}
The definition of depth given in Definition~\ref{defn:depth} makes sense, see \cite[Lemma~5.2.1]{De}.

Let $G$ be a connected reductive algebraic group defined over $E$ and let $M:= \fR_{E/F}\, G$  be the reductive group over $F$ obtained by Weil restriction of scalars from $G$.  Let $G(E)$ denote the group of $E$-points of $G$, and $M(F)$ the $F$-points of $M$. 
We will denote by $\Phi(G,T)$ the root system of $G_{E^{\sep}}$ with respect to $T_{E^{\sep}}$ for $G$ some group defined over some subfield of $E^{\sep}$ and $T$ a (not necessarily maximal) torus in $G$. Let $X^*(T)$ denote the group of algebraic characters of $T$.  

The set $M(F)$ has the structure of  a variety defined over $F$, and  the set $G(E)$ has the structure of a variety defined over $E$.   The sets $M(F)$ and $G(E)$ are in bijective correspondence.   They are homeomorphic as topological spaces.   
 Once the group structure on $G(E)$ is transported to 
$M(F)$, we have an isomorphism of locally compact totally disconnected topological groups:
\begin{eqnarray}\label{RR1}
\iota\colon G(E)\ra M(F).
\end{eqnarray}
Therefore, $G(E)$ and $M(F)$ have the same representation theory. 
 Let
 \begin{eqnarray}\label{RR2}
 \iota^*\colon \Pi(G,M)\to \Pi(G,E)
 \end{eqnarray}
 be the canonical bijection.



 \section{Depth-comparison under the Shapiro isomorphism}     \label{sec:Shapiro}
Let $\cI_F$ be the inertia subgroup of $W_F$ and $\cP_F$ the wild inertia subgroup.  
Attached to a real number $r \geq -1$ is the ramification subgroup 
$W_F^r$ of $W_F$.   We use the upper numbering convention of \cite[Chap. IV]{Ser1}, so that $W_F^{-1} = W_F$.  We have the semi-continuity property
\[
W_F^r = \bigcap_{s<r} W_F^s
\]
for all $r>0$.   

One also forms the subgroup 
$\bigcup_{s>r} W_F^s$ and its closure 
\[
W_F^{r+} = \textrm{cl}(\bigcup_{s>r} W_F^s)
\]
in $W_F$.   
   One says that $r$ is a \emph{jump} of $\overline{F}/F$ if $W_F^{r+} \neq W_F^r$.   In particular,
\[
W_F^0 = \cI_F, \quad \quad W_F^{0+} = \cP_F.
\]
Each of the groups $W_F^r, W_F^{r+}$ with $r \geq 0$ is profinite, closed and normal in $W_F$.   If $r>0$ then $r$ is a jump if and only if $r \in \Q$, see
\cite[\S 2.4, Corollary]{BH}.


 \textsc{The inverse $\psi_{E/F}$ of the Hasse--Herbrand function \cite[IV \S3]{Ser1}.}  Let $e = e(E/F)$.   Let $\Gamma$ denote the Galois group $\Gal(E/F)$ and consider the upper numbering of the ramification groups.
Let $j$ be the largest jump in the upper numbering of the ramification groups of $\Gamma$.     With $x \geq j$ we have 
\begin{eqnarray*}
\psi_{E/F}(x) & = & \int_0^x(\Gamma^0 : \Gamma^w) dw \\
& = & \psi_{E/F}(j) + \int_{j}^x(\Gamma^0: 1) dw \\
& = & \psi_{E/F}(j) + (x - j) e\\
& = & \psi_{E/F}(j) - je +ex
\end{eqnarray*}
so that $\psi_{E/F}(x)$ with $x \geq j$ is simply a \emph{translate} of the linear function $x \mapsto  ex$.   For sure, we have
\[
\psi_{E/F}(x) \leq e x
\]
with equality if and only if $j = 0$ i.e. if and only if $E/F$ is tamely ramified; 
and 
\[
\psi_{E/F}(x) \sim e x
\]
as $x \to \infty$. 

The following lemma is due to Bushnell-Henniart \cite[\S1.4]{BH}.
   
\begin{lem} \label{comp}  The comparison lemma.  If $r>0$, then   
  \begin{eqnarray*}
W_F^r \cap W_E &=& W_E^{\psi_{E/F}(r)}, \\
W_F^{r+} \cap W_E &=& W_E^{\psi_{E/F}(r)+}.
 \end{eqnarray*}
  \end{lem}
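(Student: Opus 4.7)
The plan is to reduce the statement to a theorem about finite Galois extensions and then apply classical ramification theory as in \cite[Chap.~IV]{Ser1}. Fix a large finite Galois extension $L/F$ with $L\supseteq E$, and set $G:=\Gal(L/F)$, $H:=\Gal(L/E)$. The upper-numbering filtration on $W_F$ (resp.\ $W_E$) is by construction compatible under the restriction map $W_F\twoheadrightarrow G$ (resp.\ $W_E\twoheadrightarrow H$) with the upper-numbering filtration on $G$ (resp.\ $H$); passing to the inverse limit over $L$ one therefore reduces the first identity, for each $r>0$, to the finite-level assertion
\[
G^r\cap H \;=\; H^{\psi_{E/F}(r)}.
\]

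For this finite identity I would first convert to lower numbering via the defining relation $G^r=G_{\psi_{L/F}(r)}$, and then invoke the compatibility of lower numbering with subgroups (\cite[IV~\S1, Prop.~2]{Ser1}), namely $G_u\cap H=H_u$ for $u\geq -1$. This yields
\[
G^r\cap H \;=\; H_{\psi_{L/F}(r)} \;=\; H^{\varphi_{L/E}(\psi_{L/F}(r))}.
\]
The transitivity of the Herbrand functions in the tower $L\supseteq E\supseteq F$ (\cite[IV~\S3, Prop.~15]{Ser1}) reads $\psi_{L/F}=\psi_{L/E}\circ\psi_{E/F}$, and consequently
\[
\varphi_{L/E}(\psi_{L/F}(r)) \;=\; \varphi_{L/E}\bigl(\psi_{L/E}(\psi_{E/F}(r))\bigr) \;=\; \psi_{E/F}(r),
\]
which establishes the first identity.

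For the $r+$ identity, the intersection with the closed subgroup $W_E$ commutes with topological closure of a directed union of subgroups, so
\[
W_F^{r+}\cap W_E \;=\; \cl\Bigl(\bigcup_{s>r}(W_F^s\cap W_E)\Bigr) \;=\; \cl\Bigl(\bigcup_{s>r}W_E^{\psi_{E/F}(s)}\Bigr),
\]
using the first identity at each $s>r$. Since $\psi_{E/F}$ is continuous and strictly increasing, the values $\psi_{E/F}(s)$ with $s>r$ are cofinal in $(\psi_{E/F}(r),\infty)$, and the right-hand side collapses to $W_E^{\psi_{E/F}(r)+}$ by the very definition of the $+$ filtration.

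The substantive content of the lemma is just Herbrand's theorem combined with the transitivity of $\psi$; the points requiring some care are the (standard) verification that the Weil-group upper-numbering filtration agrees with the inverse limit of the upper-numbering filtrations on the finite Galois quotients, and the justification of the interchange of closure and directed union-intersection in the $r+$ step. Neither is a serious obstacle, but both are essential bookkeeping in order to deduce the Weil-group statement cleanly from its Galois-group counterpart.
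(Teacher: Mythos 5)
The paper does not give a proof of this lemma — it simply cites Bushnell--Henniart \cite[\S1.4]{BH}; there is a commented-out proof in the source that reduces to finite quotients as you do but leaves essentially all the ramification-theoretic details to ``standard material.'' Your proof fills in those details correctly: converting upper to lower numbering, applying compatibility of lower numbering with subgroups (which uses that $\Gal(L/E)$ \emph{is} a subgroup of $\Gal(L/F)$, requiring $E \subseteq L$; your hypothesis $L\supseteq E$ handles this), converting back with $\varphi_{L/E}$, and invoking the transitivity $\psi_{L/F}=\psi_{L/E}\circ\psi_{E/F}$, then passing to the inverse limit. This is the standard route and matches the content behind the paper's citation.

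One small imprecision worth correcting: in the $r+$ step you justify pulling the intersection with $W_E$ inside the closure by saying $W_E$ is a \emph{closed} subgroup. Closedness of $A$ only gives $\overline{B\cap A}\subseteq \overline{B}\cap A$; the inclusion you actually need, $\overline{B}\cap A\subseteq\overline{B\cap A}$, is a consequence of $A$ being \emph{open}. Since $W_E$ has finite index in $W_F$, it is open as well as closed, so your step is valid — but the correct justification is the openness (equivalently, that $W_E$ is clopen). The paper is explicit about this when it treats the $r=0$ case right after the lemma, citing \cite[\S1.6, Prop.~5]{Bourbaki} for open and closed $A$, and you should phrase the justification the same way.
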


  We require the second statement in Lemma \ref{comp} for $r = 0$.  For this, let $X$ be a topological space, let $A$ be an open and closed subset of $X$. Then,  for every subset $B$ of $X$,
   according to \cite[\S1.6 prop. 5]{Bourbaki}  we have
     \[
     \overline{B} \cap A = \overline{B \cap A}
     \]
  where the overline denotes closure in $X$.    Now let  $X = W_F, A = W_E$, $B = \bigcup_{s>0}W_F^s$.   We obtain 
  \begin{equation}
  W_F^{0+} \cap W_E = W_E^{0+}.
  \end{equation}

\begin{rem} {\rm The equalities in Lemma~\ref{comp} are still valid for any finite separable extension $E/F$, see \cite[Proposition~1.4]{BH}. Also, as already noticed in \cite[Lemma~1]{BH1}, if $E/F$ is a finite, tamely ramified field extension with $e = e(E|F)$, then $\cP_E = \cP_F$ and 
\begin{eqnarray*}
&W_F^r = W_E^{er}, & \text{if  $r > 0$,}\\
&W_F^{r+} = W_E^{er+}, & \text{if $r \geq 0$.}
\end{eqnarray*}
}
\end{rem}

 \textsc{Induction}.    We require the continuous noncommutative cohomology as developed in Borel-Serre \cite{BS}.    So, let $\g$ denote a topological group.   
 A $\g$-set is a discrete topological space $X$ on which $\g$ operates on the left in a continuous fashion (i.e. the isotropy subgroup of each point of $X$ is open in $\g$).     
 A $\g$-group $A$ is a group in the category of $\g$-sets, as in \cite[\S 1.2]{BS}.   The cohomology set $\H^1(\g, A)$ is defined in \cite[\S 1.2]{BS}: 
 it is constructed from continuous cocycles of $\g$ with values in $A$.   Then $\H^1(\g, A)$ is   a pointed set - the distinguished point is the class of the unit cocycle.   
 
 If $\h$ is a subgroup of $\g$ and $A$ is a $\g$-group, then the induced group $A^*$ is defined in \cite[\S 1.28]{BS}:
 \[
 A^*: = \Ind_{\h}^{\g}(A)
 \]
 It comprises all continuous $\h$-equivariant maps from $\g$ to $A$ which are constant on left cosets modulo an open subgroup of $\g$.   Then $A^*$ becomes a $\g$-group via 
 \[
 ({}^gf)(x) = f(xg)
 \]
  for all $f \in A^*$ and $g,x \in \g$.

 \begin{thm}\label{Sh0} \cite[Proposition 1.29]{BS}.   Let $\h$ be a subgroup of $\g$, let $A$ be an $\h$-group, and let $A^*$ be the corresponding induced $\g$-group.   Suppose that the open 
 normal subgroups of  $\g$ form a fundamental system of neighbourhoods of $1 \in \g$.   Then we have a canonical isomorphism of pointed sets:
 \begin{eqnarray}\label{Sh1}
  \H^1(\g, A^*) \simeq \H^1(\h, A).
  \end{eqnarray}
  \end{thm}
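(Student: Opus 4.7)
The plan is to exhibit explicit maps in both directions and verify they are mutually inverse morphisms of pointed sets, following the classical Shapiro recipe but with multiplicative bookkeeping. The forward direction is the easier one: evaluation at $1\in\g$ gives an $\h$-equivariant morphism $\epsilon\colon A^*\to A$, $f\mapsto f(1)$, because for $h\in\h$ and $f\in A^*$ the $\h$-equivariance of $f$ yields $({}^h f)(1)=f(h)={}^h f(1)$. Composing the restriction $\H^1(\g, A^*)\to \H^1(\h, A^*)$ with $\epsilon_*$ then gives
\[
\Psi\colon \H^1(\g, A^*)\longrightarrow \H^1(\h, A),\qquad [\alpha]\longmapsto \bigl[h\mapsto \alpha(h)(1)\bigr].
\]
The pointwise product in $A^*$ combined with the identity $\alpha(h')(h)={}^h\bigl(\alpha(h')(1)\bigr)$, obtained by evaluating $\h$-equivariance of $\alpha(h')$ at $1$, yields the cocycle identity for $h\mapsto\alpha(h)(1)$; coboundaries are manifestly sent to coboundaries.

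For the inverse $\Phi\colon\H^1(\h, A)\to\H^1(\g, A^*)$, fix a set $S$ of right-coset representatives for $\h\backslash\g$ with $1\in S$, so that every $x\in\g$ decomposes uniquely as $x=\eta(x)\sigma(x)$ with $\eta(x)\in\h$, $\sigma(x)\in S$. Given $\beta\in Z^1(\h, A)$, set
\[
\alpha(g)(x)\;:=\;{}^{\eta(x)}\beta\bigl(\eta(x)^{-1}\eta(xg)\bigr).
\]
The identity $\eta(hx)=h\,\eta(x)$ for $h\in\h$ shows $\alpha(g)\in A^*$, and writing $a=\eta(x)^{-1}\eta(xg)$, $b=\eta(xg)^{-1}\eta(xgg')$, both in $\h$ with $ab=\eta(x)^{-1}\eta(xgg')$, reduces the $\g$-cocycle identity $\alpha(gg')(x)=\alpha(g)(x)\cdot\alpha(g')(xg)$ to the cocycle identity $\beta(ab)=\beta(a)\cdot{}^{a}\beta(b)$ for $\beta$. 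Evaluating at $x=1$ recovers $\alpha(h)(1)=\beta(h)$, so $\Psi\circ\Phi=\mathrm{id}$, and the reverse identity follows from the same bookkeeping together with independence of $[\alpha]$ on the choice of $S$.

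The principal obstacle is reconciling the two distinct continuity requirements in play: functions in $A^*$ must be constant on cosets of some open subgroup of $\g$, while cocycles $\g\to A^*$ must themselves be continuous. The hypothesis that open normal subgroups form a fundamental system of neighbourhoods of $1$ is precisely what permits a simultaneous reduction to a common open normal $\u\lhd\g$ through which $\beta$ factors; one then takes $S$ as a lift of a transversal for $\h\backslash\g/\u$, and the normality of $\u$ in $\g$ ensures $\eta(xug)=\eta(xg)$ for $u\in\u$, so that the formulas above descend to the finite quotient $\g/\u$. Once this reduction is in place, the verification that $\Psi$ and $\Phi$ are mutually inverse and respect coboundaries is entirely routine.
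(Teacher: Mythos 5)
The paper cites this result from Borel--Serre \cite[Proposition~1.29]{BS} without proof, so there is no internal argument to compare against; what you are supplying is the standard cocycle-level Shapiro argument. Your definitions of $\Psi$ (evaluation at $1$ after restriction) and $\Phi$ (via a section of $\h\backslash\g$), the verification that $\alpha(g)$ is $\h$-equivariant, the cocycle check for $\Phi$, and the identity $\Psi\circ\Phi=\mathrm{id}$ are all correct.

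Two steps are not right as written. First, $\Phi\circ\Psi=\mathrm{id}$ is not ``the same bookkeeping together with independence of $[\alpha]$ on the choice of $S$''; one must actually exhibit the coboundary. Setting $\alpha':=\Phi(\Psi(\alpha))$, expanding $\eta(x)^{-1}\eta(xg)=\sigma(x)g\sigma(xg)^{-1}$ by the cocycle law and using the $\h$-equivariance $\alpha(g)(x)={}^{\eta(x)}\alpha(g)(\sigma(x))$ yields
\[
\alpha'(g)(x)={}^{\eta(x)}\alpha(\sigma(x))(1)\cdot\alpha(g)(x)\cdot\bigl({}^{\eta(xg)}\alpha(\sigma(xg))(1)\bigr)^{-1},
\]
i.e.\ $\alpha'(g)=(a^*)^{-1}\cdot\alpha(g)\cdot{}^{g}a^*$ with $a^*(x):=\bigl({}^{\eta(x)}\alpha(\sigma(x))(1)\bigr)^{-1}$, and one then checks $a^*\in A^*$. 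This is a genuine, if short, computation, not a formal symmetry of the other direction. Second, the continuity paragraph is misstated: a transversal $S$ for $\h\backslash\g/\u$ does not make $\eta$ well-defined (the factorization $x=hsu$ with $h\in\h$, $s\in S$, $u\in\u$ is not unique when $\h\cap s\u s^{-1}\neq 1$), and normality of $\u$ only gives $xug\in xg\u$, not $\eta(xug)=\eta(xg)$. What the hypothesis on open normal subgroups actually buys you is a locally constant section $\sigma\colon\h\backslash\g\to\g$ together with an open normal $\u\lhd\g$ on which $\beta$ and the relevant action are trivial, from which continuity of each $\alpha(g)$ and of $g\mapsto\alpha(g)$ follow. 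Both issues are repairable, but as stated they do not go through.
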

 This is the \emph{Shapiro isomorphism}, denoted $\Sh$.   
 
 Here is an important application of the Shapiro isomorphism.  Let $G^\vee$ (resp. $M^\vee$) denote the complex Langlands dual of $G$ (resp. $M$).   
  Let $\g = W_F, \h = W_E, A = G^\vee$.   Then $M^\vee$ is the induced group $\Ind_{W_E}^{W_F}(G^\vee)$.  The Weil groups $W_E$ and $W_F$ are locally profinite topological groups.
  The inertia subgroup $\cI_E$ (resp. $\cI_F$) contains open normal subgroups forming a fundamental system of neighbourhoods of the identity in $W_E$ (resp. $W_F$).  
   
   We will take $G^\vee$ in its discrete topology, so that 
  $G^\vee$ becomes a $W_E$-group;  and $M^\vee$ in its discrete topology, so that $M^\vee$ becomes a $W_F$-group. 
  We can now apply Theorem  \ref{Sh0}, and obtain the canonical isomorphism of pointed sets
 \begin{eqnarray}\label{Sh2}
 \H^1(W_F, M^\vee) \simeq \H^1(W_E, G^\vee).
 \end{eqnarray}
 
 Here is a much more specialized application, which we will need in the proof of Theorem \ref{shr}.   Let
 \[
 \h = W_E/W_E^{\psi(r)+}, \quad \g = W_F/W_F^{r+}, \quad A = (G^\vee)^{W_E^{\psi(r)+}}
 \]
 
 To simplify notation, set $G_1 = W_E$, $G_2 = W_F$, $H_1 = W_E^{\psi(r)+} $, $H_2 = W_F^{r+}$, so that  $G_1 \subset G_2$, $H_1 \subset H_2$, $H_1 = H_2 \cap G_1$ by the comparison lemma \ref{comp}.   
 We need to show that 
 the map 
 \[
\eta \colon  \h \to \g, \quad xH_1 \mapsto xH_2
 \]
  is injective.    To prove this, note that, for all $x \in G_1$ we have  
 \[
  xH_2 = H_2 \implies x \in H_2 \implies x \in H_2 \cap G_1 \implies x \in H_1 \implies xH_1 = H_1.
 \]
 Setting $x = z^{-1}y$ with $y,z \in G_1$ we infer that $yH_2 = zH_2 \implies yH_1 = zH_1$ as required.
  We identify $\h$ with its image $\eta(\h) \subset \g$, and view $\h$ as a subgroup of $\g$.      
 
 We can therefore apply Theorem \ref{Sh0} and obtain the  canonical isomorphism
 \begin{equation}\label{Sh3}
 \H^1\left(W_F/W_F^{r+}, \Ind_{W_E/W_E^{\psi(r)+}}^{W_F/W_F^{r+}}(G^\vee)^{ W_E^{\psi(r)+}}  \right ) \simeq \H^1\left(W_E/W_E^{\psi(r)+}, (G^\vee)^{W_E^{\psi(r)+}}\right).
 \end{equation}

 The following lemma is observed in \cite[Lemma 3]{MiPa}.     
  \begin{lem} \label{ABC} The submodule lemma.     Let $J$ be a group, $H$ and $A$ subgroups of $J$ with $A$ being normal in $J$.  Let $B = H\cap A$, let $M$ be an $H$-module.   Then there is a canonical isomorphism 
  of $J/A$-modules:
 \[
 (\Ind_H^J M)^A \simeq \Ind_{H/B}^{J/A} M^B.
 \]
 \end{lem}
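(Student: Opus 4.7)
My plan is to construct an explicit map $\phi \colon (\Ind_H^J M)^A \to \Ind_{H/B}^{J/A} M^B$ by descent, and show that it is an isomorphism of $J/A$-modules. Observe first that the inclusions $B \hookrightarrow A$ and $H \hookrightarrow J$ induce a homomorphism $H/B \to J/A$, which is injective precisely because $B = H \cap A$, so that the right-hand side $\Ind_{H/B}^{J/A}$ is well-defined.

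For $f \in (\Ind_H^J M)^A$, set $\phi(f)(xA) := f(x)$. This is well-posed on cosets because the $A$-action on $\Ind_H^J M$ is by right translation, so $f(xa) = f(x)$ for all $a \in A$. The key point of the proof is to verify that $f(x) \in M^B$ for every $x \in J$. For $b \in B$, the $H$-equivariance of $f$ gives $f(bx) = b \cdot f(x)$; on the other hand, the normality of $A$ in $J$ yields $x^{-1} b x \in A$, hence $bx \in xA$, so $f(bx) = f(x)$ by right $A$-invariance. Comparing the two identities gives $b \cdot f(x) = f(x)$, as required. This is the step where both the hypothesis that $A$ is normal in $J$ and the identity $B = H \cap A$ are essentially used; I expect it to be the only non-formal point of the argument.

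Once the values-in-$M^B$ check is in place, the $(J/A)$-equivariance of $\phi$ follows directly from the formula $(g \cdot f)(x) = f(xg)$ together with the definition of $\phi$, and the $(H/B)$-equivariance of $\phi(f)$ reduces to the identity $f(hx) = h \cdot f(x)$. An explicit inverse $\psi$ is obtained by lifting: for $F \in \Ind_{H/B}^{J/A} M^B$, set $\psi(F)(x) := F(xA)$. One checks that $\psi(F) \in (\Ind_H^J M)^A$ (the $B$-invariance of the values of $F$ allows the $H$-equivariance condition to reduce modulo $B$) and that $\phi \circ \psi$ and $\psi \circ \phi$ are the identities. Any continuity or local-constancy requirements in the definition of induced group transport without difficulty between the two sides, since $A$ is closed in $J$ and its action on $(\Ind_H^J M)^A$ is trivial by construction.
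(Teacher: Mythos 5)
Your proof is correct; note that the paper itself does not prove Lemma~\ref{ABC} but simply cites \cite[Lemma~3]{MiPa}, so there is no in-paper argument to compare against. Your map $\phi(f)(xA) = f(x)$, the verification that the values lie in $M^B$ (which is indeed where both normality of $A$ in $J$ and the identity $B = H\cap A$ enter), and the explicit inverse $\psi(F)(x) = F(xA)$ constitute the standard direct argument and are all in order. One small point you leave implicit: for the right-hand side even to make sense you need $B$ to be normal in $H$ (so that $H/B$ is a group and $M^B$ is an $H/B$-module); this follows immediately from $A \triangleleft J$ together with $B = H\cap A$, since $hBh^{-1} = (hHh^{-1})\cap(hAh^{-1}) = H\cap A = B$ for $h\in H$, but it is worth a sentence. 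Your closing remark about continuity and local constancy transporting cleanly is appropriate for the topological setting in which the lemma is actually applied in the paper (Theorem~\ref{shr}).
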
 
 
 We shall need this lemma in the proof of Theorem \ref{shr}.   
 
For the present, let $G^\vee$ (resp. $M^\vee$) denote the complex Langlands dual of $G$ (resp. $M$).   
As in \cite[8.2, Eqn.(1)]{Bor}), we have the canonical injective map   
\begin{equation} \label{eqn:PhiH1}
\Phi(M,F) \hookrightarrow \H^1(W'_F,M^\vee)
\end{equation}
which allows us to view $\phi \in \Phi(M(F))$  as a cocycle $a_{\phi} \in Z^1(W'_F, M^\vee)$.  The restriction of this cocycle to $W_F$ will 
 be denoted $\alpha_{\phi}$, and its cohomology class in $\H^1(W_F, M^\vee)$ will be denoted $[\alpha_{\phi}]$.

 \begin{lem}\label{union} We have a canonical isomorphism of pointed sets:
  \begin{equation*} \label{eqn:H1}
 H^1(W_F,M^\vee)=\bigcup_{r\in\R_{\geq 0}} H^1(W_F/W_F^{r+},(M^\vee)^{W_F^{r+}}).
 \end{equation*}
 \end{lem}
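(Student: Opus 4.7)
The plan is to prove the two inclusions separately. The inclusion $\bigcup_r H^1(W_F/W_F^{r+},(M^\vee)^{W_F^{r+}}) \subseteq H^1(W_F,M^\vee)$ follows at once from the non-abelian inflation--restriction exact sequence \cite[Chapter I, \S 5.8]{Ser2}: since each $W_F^{r+}$ is closed normal in $W_F$, the inflation map
\[
H^1(W_F/W_F^{r+},(M^\vee)^{W_F^{r+}}) \longrightarrow H^1(W_F,M^\vee)
\]
is injective, so each summand on the right embeds canonically into $H^1(W_F,M^\vee)$.

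For the reverse inclusion, I would fix a continuous $1$-cocycle $c \in Z^1(W_F,M^\vee)$ and aim to produce $r\geq 0$ with $c(W_F^{r+})=1$ and $c(W_F) \subseteq (M^\vee)^{W_F^{r+}}$; such an $r$ forces $c$ to descend to a cocycle on $W_F/W_F^{r+}$ with values in $(M^\vee)^{W_F^{r+}}$ whose class inflates to $[c]$. The first step is to exploit that $M^\vee = \Ind_{W_E}^{W_F}(G^\vee)$: the $W_E$-action on $G^\vee$ factors through $\Gal(\wt E/E)$ for the splitting field $\wt E$ of $G$, so after passing to the Galois closure $E'/F$, the $W_F$-action on $M^\vee$ factors through the finite group $\Gal(E'/F)$. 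Taking $r_0$ to be the largest upper-ramification break of $E'/F$, we have $W_F^{r_0+} \subseteq W_{E'}$, and this subgroup acts trivially on $M^\vee$.

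The restriction $c|_{\cI_{E'}} : \cI_{E'} \to M^\vee$ is therefore a continuous homomorphism from a profinite group to a discrete group, so its kernel $K_0$ is open and of finite index in $\cI_{E'}$. Using the triviality of the $W_{E'}$-action on $M^\vee$, one verifies that $K_0$ is normal in $W_F$: for $g \in W_F$ and $k \in K_0$,
\[
c(gkg^{-1}) = c(g)\cdot{}^{gk}c(g^{-1}) = c(g)\cdot{}^g c(g^{-1}) = c(1) = 1,
\]
since $gk$ and $g$ act identically on $M^\vee$ because $k\in W_{E'}$.

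The main obstacle is then to convert the open normal subgroup $K_0 \subseteq \cI_F$ into a ramification subgroup $W_F^{r+}$. My plan is to use that the inertia groups $\cI_L$ of finite Galois extensions $L/F$ form a cofinal family of open subgroups of $\cI_F$: the fixed field of $K_0$ has the form $N\cdot F^{\ur}$ for some finite extension $N/F$, and taking $L$ to be the Galois closure of $N\cdot E'$ over $F$ gives a finite Galois $L\supseteq E'$ with $\cI_L \subseteq K_0$. The largest upper-numbering ramification break $j$ of $L/F$ then satisfies $W_F^{j+} \subseteq W_L$, whence $W_F^{j+} \subseteq W_L \cap \cI_F = \cI_L \subseteq K_0$. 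Since $L\supseteq E'$ forces $j\geq r_0$, the choice $r=j$ simultaneously yields $c|_{W_F^{r+}}\equiv 1$ and $(M^\vee)^{W_F^{r+}} = M^\vee$, which completes the argument.
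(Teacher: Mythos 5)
Your argument is correct in substance, and the overall strategy agrees with the paper's: show the inflation maps embed each $\rH^1(W_F/W_F^{r+},(M^\vee)^{W_F^{r+}})$ into $\rH^1(W_F,M^\vee)$, then show that every continuous cocycle is trivial on $W_F^{r+}$ and takes values in $(M^\vee)^{W_F^{r+}}$ for $r$ large. Where you differ is in \emph{how} you produce the exponent $r$. The paper's proof is purely topological: it observes that the action of $W_F^0$ on the discrete group $M^\vee$ has open kernel, that the cocycle is trivial on an open subgroup of $W_F^0$ by continuity, and that $\bigcap_{r\geq 0} W_F^r=\{1\}$ together with compactness of $W_F^0$ forces $W_F^{r+}$ inside any prescribed open subgroup. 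You instead work concretely: you identify a finite Galois extension $E'/F$ through which the action factors, use the fact that on $\cI_{E'}$ the cocycle is a genuine homomorphism (so its kernel $K_0$ is open), translate the open subgroup $K_0$ into an inertia group $\cI_L$ of an explicit finite Galois extension $L/F$, and read off $r$ as an upper ramification break of $L/F$. This buys you an explicit bound for $r$ in terms of the ramification of $L/F$, at the cost of more bookkeeping; the paper's argument is shorter and needs no Galois descent of the field fixed by $K_0$. Two small points: the verification that $K_0$ is normal in $W_F$ is correct but never used in your argument (the $W_F^{r+}$ you eventually produce are automatically normal, and that is all that is needed); and if $L/F$ happens to be unramified the largest upper break is $j=-1$, so you should set $r=\max(j,0)$ to stay in the range $r\in\R_{\geq 0}$ considered in the statement (this is harmless, since $W_F^{0+}=\cP_F\subseteq\cI_L$ in that case).
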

  
 \begin{proof}   The group $W_F^{r+}$ is a normal subgroup of $W_F$ for $r \geq 0$.   According to \cite[Proposition 1.27]{BS}, we have 
 a canonical injective map
 \begin{equation}\label{inj}
 \varinjlim_{r \in \R_{\geq 0}} \H^1(W_F/W_F^{r+}, (M^\vee)^{W_F^{r+}}) \to \H^1(W_F, M^\vee).
 \end{equation}
 
 We check the surjectivity of this map.   
 
  The ramification filtration $\{W_F^r\}_{r \geq 0}$ is descending and satisfies 
 \begin{equation}\label{intersect}
 \bigcap_{r \geq 0} W_F^r = \{1\}
 \end{equation}
     
The dual group $M^\vee$ is equipped with the discrete topology, and is an $W_F^0$-group in the terminology of \cite{BS}.   That is, 
there is a continuous action of $W_F^0$ on  $M^\vee$, i.e. we have a continuous homomorphism $\rho : W_F^0 \to \Aut(M^\vee)$.   Since $\Aut(M^\vee)$ is also discrete,  
the kernel of $\rho$ is an open normal subgroup $U \subset W_F^0$.   
By (\ref{intersect}), we will have $W_F^{r+} \subset U$  for sufficiently large  $r$,  say $r \geq r_0$.     In other words, given $\alpha \in Z^1(W_F, M^\vee)$, 
the image of $\alpha$ is contained in $M^\vee = (M^\vee)^{W_F^{r+}}$ for   $r \geq r_0$. 
 
By the continuity (smoothness) of $\alpha$, $\alpha$ is trivial on some open subgroup $H$ of $W_F^0$, thus $H$ is of finite index in $W_F^0$.     
Again by (\ref{intersect}), $W_F^{r+}$ is contained in $H$ for sufficiently  large $r$, say $r \geq r_1$.      Therefore, for $r \geq \max(r_0, r_1)$,  $\alpha$ belongs to $Z^1(W_F/W_F^{r+}, (M^\vee)^{W_F^{r+}})$.   

To complete the proof, we combine this data with the injective map (\ref{inj}). 
  
 \end{proof}
 
Lemma \ref{union} allows us to present a new definition of depth.    

   
 \begin{defn} \label{Oidef} For $\phi \in \Phi(M,F)$, we define the depth of $\phi$ as
 \[
 \dep(\phi): =  \inf \{ r\in \R_{\geq 0} : [\alpha_{\phi}] \in \H^1(W_F/W_F^{r^+} ,(M^\vee)^{W_F^{r+}})\} .
 \] 
 \end{defn}

 If $M(F)$ is tamely ramified, then the wild inertia group $\cP_F = W_F^{0+}$ acts trivially on $M^\vee$.   
In particular, we may regard the restriction  $\alpha_{\phi}|_{W_F^{0+}}$ of $\alpha_{\phi}$ to $W_F^{0+}$ as a continuous homomorphism from $W_F^{0+}$ to $M^\vee$.  

For tamely ramified groups, the customary definition is as follows.

\begin{defn} \label{deftame}
For tamely ramified groups, the usual depth of $\phi$ is defined as follows:
\[
\dep(\phi) : = \inf\{r \geq 0 : \alpha_{\phi}(W_F^{r+})\; \textrm{has trivial image in } M^\vee\}.
\]
\end{defn}
Note that this is well-defined, i.e. independent of the choice of the representative $\alpha_{\phi}$ of $[\alpha_{\phi}]$. Write $\alpha = \alpha_{\phi}$ and choose another cocycle $\beta$ 
representing $[\alpha_{\phi}]$.  
Then, since $\alpha$ and $\beta$ are cohomologous in $Z^1(W_F, M^\vee)$, there exists $m \in M^\vee$  such that
\[
\beta(w) = m^{-1} \cdot \alpha(w) \cdot {}^w m
\]
for all $w \in W_F$.   By the triviality of the action of $W_F^{0+}$ on $M^\vee$, we have 
\[
\beta(w) = m^{-1} \cdot \alpha(w) \cdot m
\]
for all $w \in W_F^{0+}$.   Therefore, for $r \geq 0$, $\alpha(W_F^{r+})$ has trivial image in $M^\vee$ if and only if so does $\beta(W_F^{r+})$.   

We need to reconcile these two definitions of depth.   

\begin{lem} \label{Oi}  For tamely ramified groups,  these two definitions are equivalent: 
\end{lem}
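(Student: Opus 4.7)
The plan is to reduce everything to the observation that, in the tame case, $W_F^{r+}$ acts trivially on $M^\vee$ for every $r\ge 0$, so the coefficient module $(M^\vee)^{W_F^{r+}}$ appearing in Definition~\ref{Oidef} is simply $M^\vee$ itself. This turns the comparison into a purely formal statement about when a class in $H^1(W_F,M^\vee)$ admits a representative trivial on a normal subgroup.

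First, since $E/F$ is such that $M$ is tamely ramified, the wild inertia group $W_F^{0+}$ acts trivially on $M^\vee$, and because $W_F^{r+}\subseteq W_F^{0+}$ for every $r\ge 0$, the fixed-point group is the whole of $M^\vee$. Hence Definition~\ref{Oidef} reads
\[
\dep(\phi)=\inf\bigl\{\,r\ge 0 : [\alpha_\phi]\in H^1(W_F/W_F^{r+},M^\vee)\,\bigr\},
\]
where the latter pointed set is viewed as a subset of $H^1(W_F,M^\vee)$ through the canonical injection from \cite[Prop.~1.27]{BS} (or, equivalently, Lemma~\ref{union}).

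Next, I would unpack what that containment means cocycle-wise. A class $[\alpha_\phi]$ lies in $H^1(W_F/W_F^{r+},M^\vee)$ precisely when $[\alpha_\phi]$ admits a representative $\beta\in Z^1(W_F,M^\vee)$ with $\beta|_{W_F^{r+}}\equiv 1$, i.e.\ $\beta(W_F^{r+})=\{1\}$. To bridge this with Definition~\ref{deftame}, I invoke exactly the same argument already used in the excerpt to prove well-definedness of Definition~\ref{deftame}: two cohomologous cocycles $\alpha,\beta$ satisfy $\beta(w)=m^{-1}\cdot\alpha(w)\cdot{}^wm$ for some $m\in M^\vee$ and all $w\in W_F$, and since $W_F^{r+}$ acts trivially on $M^\vee$ this simplifies on $W_F^{r+}$ to
\[
\beta(w)=m^{-1}\,\alpha(w)\,m,\qquad w\in W_F^{r+}.
\]
Conjugation by $m$ is an isomorphism of $M^\vee$, so $\beta(W_F^{r+})=\{1\}$ if and only if $\alpha(W_F^{r+})=\{1\}$.

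Putting the pieces together, for each $r\ge 0$ we have the chain of equivalences: $[\alpha_\phi]\in H^1(W_F/W_F^{r+},M^\vee)$ $\iff$ some cocycle cohomologous to $\alpha_\phi$ is trivial on $W_F^{r+}$ $\iff$ $\alpha_\phi$ itself is trivial on $W_F^{r+}$ $\iff$ $\alpha_\phi(W_F^{r+})$ has trivial image in $M^\vee$. Taking the infimum over $r$ shows the two infima coincide, proving the lemma. There is no serious obstacle here beyond invoking the compatibility between the inductive-limit description of $H^1(W_F,M^\vee)$ in Lemma~\ref{union} and the coboundary relation on cocycles; the whole content of the proof is the triviality of the $W_F^{r+}$-action in the tame case.
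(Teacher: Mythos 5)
Your proof is correct and follows essentially the same route as the paper: reduce via tameness to $(M^\vee)^{W_F^{r+}}=M^\vee$, identify $\H^1(W_F/W_F^{r+},M^\vee)$ inside $\H^1(W_F,M^\vee)$ as the classes admitting a cocycle representative trivial on $W_F^{r+}$, and transfer triviality across cohomologous cocycles via the coboundary relation. The only difference is cosmetic: the paper defers the final equivalence to the well-definedness discussion that precedes Definition~\ref{deftame}, whereas you re-derive it inline.
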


\begin{proof} It suffices to check that the following are equivalent for $r \geq 0$:
\begin{itemize}
\item $\alpha_{\phi}(W_F^{r+})$ \; has trivial image in \; $M^\vee$,
\medskip
\item $[\alpha_{\phi}]$ belongs to $\H^1(W_F/W_F^{r+}, (M^\vee)^{W_F^{r+}})$.
\end{itemize}
Since $M(F)$ is tamely ramified, the wild inertia group $\cP_F$ acts trivially on $M^\vee$ and so the fixed set $(M^\vee)^{W_F^{r+}}$ equals $M^\vee$ for any $r \geq 0$.  
Therefore $\H^1(W_F/W_F^{r+}, (M^\vee)^{W_F^{r+}})$ is nothing but $\H^1(W_F/W_F^{r+}, M^\vee)$ which is the subset of $\H^1(W_F, M^\vee)$ consisting of cohomology classes which 
can be represented by a $1$-cocycle whose restriction to $W_F^{r+}$ is trivial.   Thus the above two conditions are equivalent.    
\end{proof}

We note that definition \ref{Oidef} is well-adapted to the proofs in \cite{MiPa}.

Lemma \ref{Oi} shows that we now have a consistent definition of depth.

\begin{rem} \label{rem:depthGL}
{\rm In the special case when $M$ is $F$-split, the group $W_F$ acts trivially on $M^\vee$, and $\alpha_\phi$ is a homomorphism, which, by definition, coincides with the restriction of $\phi$ to $W_F$.  Hence Lemma~\ref{Oi} shows that $\dep(\phi)$ coincides with the definition of the depth of $\phi$, as defined for instance in \cite[\S2.3]{ABPS}.}
\end{rem}

Our next result is a refinement of the isomorphism  (\ref{Sh2}).   

\begin{thm}\label{shr}      If $r \geq 0$, then we have a canonical isomorphism  
\begin{eqnarray*}
 \H^1\left(W_F/ W_F^{r+}, (M^\vee)^{W_F^{r+}} \right)  & \cong & \H^1\left(W_E/W_E^{\psi(r)+}, (G^\vee)^{W_E^{\psi(r)+}}\right)
 \end{eqnarray*} 
 where $\psi = \psi_{E/F}$.    
\end{thm}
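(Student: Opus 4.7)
The plan is to combine the submodule lemma (Lemma~\ref{ABC}) with the instance of the Shapiro isomorphism already established in equation~(\ref{Sh3}). Since $M^\vee$ is the induced $W_F$-group $\Ind_{W_E}^{W_F}(G^\vee)$, the two sides of the desired isomorphism are related by taking fixed points under $W_F^{r+}$ on the ``large'' $W_F$-group $M^\vee$ and then comparing $H^1$'s.

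First, I would apply Lemma~\ref{ABC} with $J = W_F$, $H = W_E$, $A = W_F^{r+}$ (a closed normal subgroup of $W_F$, as recalled at the beginning of the section), and the $W_E$-module $G^\vee$. By the comparison lemma (Lemma~\ref{comp}), we have
\[
B := H \cap A = W_E \cap W_F^{r+} = W_E^{\psi(r)+},
\]
so the submodule lemma yields a canonical isomorphism of $W_F/W_F^{r+}$-groups
\[
(M^\vee)^{W_F^{r+}} = \bigl(\Ind_{W_E}^{W_F} G^\vee\bigr)^{W_F^{r+}} \;\cong\; \Ind_{W_E/W_E^{\psi(r)+}}^{W_F/W_F^{r+}} (G^\vee)^{W_E^{\psi(r)+}}.
\]
(Strictly, Lemma~\ref{ABC} is stated for abstract groups and modules, but the same argument applies in the category of topological groups acted on continuously, since the open coset condition defining the induced group passes to the quotient.)

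Second, applying $H^1(W_F/W_F^{r+}, -)$ to the above isomorphism gives
\[
H^1\bigl(W_F/W_F^{r+},(M^\vee)^{W_F^{r+}}\bigr) \;\cong\; H^1\!\left(W_F/W_F^{r+}, \Ind_{W_E/W_E^{\psi(r)+}}^{W_F/W_F^{r+}} (G^\vee)^{W_E^{\psi(r)+}}\right).
\]
Now I invoke the Shapiro isomorphism in the form already justified in~(\ref{Sh3}): the comparison lemma was used there to check that $W_E/W_E^{\psi(r)+}$ embeds as a subgroup of $W_F/W_F^{r+}$, and the hypothesis on open normal neighbourhoods in Theorem~\ref{Sh0} is satisfied for these profinite-by-discrete quotients. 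Thus~(\ref{Sh3}) identifies the right-hand side with $H^1\bigl(W_E/W_E^{\psi(r)+},(G^\vee)^{W_E^{\psi(r)+}}\bigr)$, and concatenating with the previous display yields the asserted isomorphism.

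The main obstacle is a bookkeeping one rather than a conceptual one: one must check that the submodule lemma is compatible with the continuous structure (i.e.\ that fixed points of a continuously induced group match with the induced group of fixed points), and that the $W_F/W_F^{r+}$-action transported through Lemma~\ref{ABC} agrees with the one implicit in~(\ref{Sh3}). Both are formal once the comparison lemma identifies $W_E^{\psi(r)+}$ as the intersection $W_F^{r+}\cap W_E$. The substantive inputs, Lemma~\ref{comp}, Lemma~\ref{ABC}, and the Shapiro isomorphism~(\ref{Sh3}), have already been set up in the form needed, so the proof reduces to stringing them together.
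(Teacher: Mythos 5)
Your proposal is correct and follows essentially the same route as the paper's proof: identify $(M^\vee)^{W_F^{r+}}$ with the induced group $\Ind_{W_E/W_E^{\psi(r)+}}^{W_F/W_F^{r+}}(G^\vee)^{W_E^{\psi(r)+}}$ via the submodule lemma (Lemma~\ref{ABC}) combined with the comparison lemma (Lemma~\ref{comp}), and then conclude by the Shapiro isomorphism in the form (\ref{Sh3}). The extra remarks you make about continuity and the well-definedness of the induced $W_F/W_F^{r+}$-action are sound and simply make explicit the bookkeeping that the paper leaves implicit.
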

   
 \begin{proof}   We have the following isomorphisms of complex reductive groups:
 \begin{eqnarray*}
  (M^\vee)^{W_F^{r+}}   &\cong & (\Ind_{W_E}^{W_F} G^\vee)^{W_F^{r+}} \\
  & \cong &\Ind_{W_E/W_E \cap W_F^{r+}}^{W_F/W_F^{r+}} (G^\vee)^{W_F^{r+} \cap W_E}  \\  
  &\cong &\Ind_{W_E/W_E^{\psi(r)+}}^{W_F/W_F^{r+}} (G^\vee)^{W_E^{\psi(r)+}}.
   \end{eqnarray*}
 In this proof, we have used, successively
 \begin{itemize}
\item  the construction of $M^\vee$ as an induced group,
\item the submodule lemma \ref{ABC} with $H = W_E$, $J = W_F$, $M = G^\vee$, $A = W_F^{r+}$,
\item the comparison lemma \ref{comp}. 
 \end{itemize} 
 
Now apply the canonical  isomorphism (\ref{Sh3}). 
 \end{proof}

\begin{thm}\label{depsh}  We have $\dep(\Sh(\phi)) = \psi_{E/F} (\dep(\phi))$.   In particular, $\phi$ has depth $0$ if and only if $\Sh(\phi)$ has depth $0$.   
\end{thm}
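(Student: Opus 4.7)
The plan is to deduce the theorem from Definition~\ref{Oidef} together with Theorem~\ref{shr}, treating the isomorphism of Theorem~\ref{shr} as the ``filtered'' version of the Shapiro isomorphism~(\ref{Sh2}). First I would unpack the definition of depth: by Definition~\ref{Oidef},
\[
\dep(\phi)=\inf S,\qquad S:=\{r\ge 0:[\alpha_\phi]\in\H^1(W_F/W_F^{r+},(M^\vee)^{W_F^{r+}})\},
\]
and analogously $\dep(\Sh(\phi))=\inf T$, where $T$ is the corresponding set for $\Sh(\phi)$ with $W_F,M^\vee$ replaced by $W_E,G^\vee$ and $r$ replaced by $s$.

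Next I would verify that the canonical isomorphism of Theorem~\ref{shr} is, upon composition with the natural inclusions into the full cohomology sets, compatible with the Shapiro isomorphism~(\ref{Sh2}) that defines $\Sh$. This is essentially built into the proof of Theorem~\ref{shr}: the three successive identifications used there (the realization of $M^\vee$ as $\Ind_{W_E}^{W_F}G^\vee$, the submodule Lemma~\ref{ABC}, and the comparison Lemma~\ref{comp}) are precisely the steps that, passed to the limit in $r$ via Lemma~\ref{union}, recover the classical Shapiro isomorphism~(\ref{Sh2}). Consequently, for each $r\ge 0$, the class $[\alpha_\phi]$ lies in $\H^1(W_F/W_F^{r+},(M^\vee)^{W_F^{r+}})$ if and only if $[\alpha_{\Sh(\phi)}]$ lies in $\H^1(W_E/W_E^{\psi(r)+},(G^\vee)^{W_E^{\psi(r)+}})$; in other words, $T=\psi_{E/F}(S)$.

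Finally I would use the elementary properties of the inverse Hasse--Herbrand function recalled at the start of Section~\ref{sec:Shapiro}: $\psi_{E/F}\colon[0,\infty)\to[0,\infty)$ is continuous, strictly increasing, and satisfies $\psi_{E/F}(0)=0$. Hence $\psi_{E/F}$ commutes with infima, yielding
\[
\dep(\Sh(\phi))=\inf T=\inf\psi_{E/F}(S)=\psi_{E/F}(\inf S)=\psi_{E/F}(\dep(\phi)).
\]
The last assertion of the theorem follows immediately from $\psi_{E/F}(0)=0$ together with $\psi_{E/F}(r)>0$ for $r>0$.

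The main obstacle I expect is the compatibility claim in the second paragraph, namely that the abstract isomorphism of Theorem~\ref{shr} genuinely transports the distinguished class $[\alpha_\phi]$ to $[\alpha_{\Sh(\phi)}]$. One should check each of the three identifications used in Theorem~\ref{shr} is natural in the coefficient module and compatible with restriction, so that after taking the direct limit over $r$ one recovers precisely (\ref{Sh2}); this is routine but does require care because the ``filtered'' Shapiro isomorphism~(\ref{Sh3}) is only stated for a subgroup pair $(\h,\g)$ that has been identified with a subgroup inclusion via the comparison lemma.
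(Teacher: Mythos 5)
Your proposal is correct and takes essentially the same approach as the paper, which simply observes that the theorem ``follows immediately from Theorem~\ref{shr} and Definition~\ref{Oidef}.'' You are right to flag the compatibility of the filtered isomorphism in Theorem~\ref{shr} with the Shapiro isomorphism~(\ref{Sh2}) defining $\Sh$ as the one point that deserves scrutiny; the paper leaves this implicit, and the rest of your argument (the infimum being preserved because $\psi_{E/F}$ is continuous, strictly increasing, and fixes $0$) is the routine completion.
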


\begin{proof} This follows immediately from Theorem \ref{shr} and definition \ref{Oidef}.   
\end{proof}

\section{Depth-comparison under the local Langlands correspondence for Weil-restricted groups} \label{sec:LLC}
We assume that the $K$-group $H$ is quasi-split. Let  $\rZ(H)$ and $\rZ(H^\vee)$ denote  the center of $H$ and $H^\vee$, respectively.

\begin{defn} \label{defn:LLC}
A local Langlands correspondence (or $\LLC$) for $(H,K)$ is a surjective map
\[
\lambda_H \colon \Pi(H,K) \to \Phi(H,K),
\]
which satisfies 
the conditions laid down by Langlands in \cite[\S 3]{Lan}.  
\end{defn}

These conditions are the desiderata of Borel \cite{Bor}.
We  will recall them now. 
The parameter $\phi$ determines a character $\chi_{\phi}$ as in \cite{Lan}.  Given $\pi \in \Pi(H,K)$, an element $\alpha \in \H^1(W_K, \rZ(H^\vee))$ determines
an element $\pi_\alpha \in \Pi(H,K)$, see \cite[p.20]{Lan}.     

To every $\phi$  in $\Phi(H,K)$, the pre-image of $\phi$ via $\lambda_H$ is 
a finite but nonempty set $\Pi_\phi$ in $\Pi(H,K)$ such that the following conditions are satisfied.

(i)  If $\phi \neq \phi'$ then $\Pi_\phi$  and $\Pi_{\phi'}$ are disjoint.

(ii) If $\pi \in \Pi_\phi$  then 
\[
\pi(z) = \chi_\phi(z)I, \quad \quad z\in \rZ_H(K).
\]

(iii) If $\phi'  = \alpha \phi$ with  $\alpha \in \H^1(W_K, \rZ(H^\vee)$, then
\[
\Pi_{\phi'}  =  \{\pi_\alpha \otimes \pi | \pi \in \Pi_\phi\}.
\]

(iv) If $\eta \colon H' \to H$ has abelian kernel and cokernel, if $\phi \in \Phi(H,K)$ and $\phi' = \eta^*(\phi)$ then the pullback of any $\pi \in \Pi_\phi$ to $G'(K)$ is the direct sum of finitely many irreducible, quasi-simple representations, all of which lie in $\Pi_{\phi'}$.

(v) If $\phi \in \Phi(H,K)$ and  one element of $\Pi_\phi$  is square integrable modulo $(\rZ(H))(K)$  then all elements are. 

(vi) If $\phi \in \Phi(H,K)$ and one element of $\Pi_\phi$  is tempered then all elements are. With respect to a distinguished splitting, write 
$\phi(w) = (a(w),w)$.   The elements of $\Pi_\phi$  are tempered if and only if $\{a(w) : w \in W_K\}$  is relatively compact in $H^\vee$.   

\begin{rem} {\rm
Note that, although Langlands in \cite{Lan} is working primarily with the extension  $\C/ \R$, he explicitly writes that many of his results hold more generally for finite extensions $E/F$ of local fields,
see \cite[p.7]{Lan}.   In this generality, he proves that the map $\phi \mapsto \chi_{\phi}$ respects restriction of scalars, see \cite[Lemma 2.11]{Lan}.   
The map $\alpha \mapsto \pi_\alpha$ also respects restriction of scalars, see \cite[Lemma 2.12]{Lan}. }
\end{rem}

\begin{defn} \label{lambda}    Let $E/F$ be a finite and Galois extension.     Let $G$ be a connected reductive group defined over $E$ which admits a $\LLC$, say $\lambda_G$. 
 Then the map $\lambda_M$ is defined to be the unique  map  for which the following diagram commutes
 \begin{eqnarray} \label{Diagram}
\begin{CD} \Pi(M,F)@ > \lambda_M >> \Phi(M,F)\\
@V \iota^* VV                       @VV \Sh V\\
\Pi(G,E)@> \lambda_G >> \Phi(G,E)
\end{CD}
\end{eqnarray}
where  $\Sh$ is the restriction to $\Phi(M,F)$ under the injection~\rm{(\ref{eqn:PhiH1})} of the Shapiro isomorphism, as in \cite[8.4]{Bor}.
\end{defn}

The pre-image via $\lambda_M$ of $\Sh^{-1}(\phi)$ will be denoted $\Pi_{\Sh^{-1}\phi}$.   Since the two vertical maps are bijective, it is clear 
that we have equality of cardinalities:
\[
\card(\Pi_{\Sh^{-1}\phi}) = \card(\Pi_\phi).
\]

The map $\lambda_M$ satisfies all the above conditions, and hence it qualifies as a local Langlands correspondence.

\smallskip

A LLC for $(H,K)$ can be enhanced in the following way. 
Let $H^\vee_\sc$ be the simply connected covering of the derived group of $H^\vee$, and  $\rZ(H^\vee_\sc)$ be the center of $H^\vee_\sc$. Let $H^\vee_\ad$ be the adjoint group of $H^\vee$.
Let $\phi\in\Phi(H,K)$. We denote by $\rZ_{H^\vee}(\phi)$ denote the centralizer of $\phi(W_K')$ in $H^\vee$.
Since $\rZ_{H^\vee}(\phi) \cap \rZ(H^\vee) = \rZ(H^\vee)^{W_K}$, we get
$\rZ_{H^\vee}(\phi) / \rZ(H^\vee)^{W_K} \cong \rZ_{H^\vee}(\phi) \rZ(H^\vee) / \rZ(H^\vee)$.
The latter can be considered as a subgroup of the adjoint group $H^\vee_\ad$. Let $\rZ^1_{H^\vee_\sc}(\phi)$ be its inverse image under the
quotient map $H^\vee_\sc \to H^\vee_\ad$. 

Following Arthur \cite[(3.2)]{ArNote}, we consider the component group of $\rZ_{H^\vee}^1(\phi)$:
\begin{equation} \label{eqn:Sphi}
\sS_\phi := \pi_0 \big( \rZ^1_{H^\vee_{\sc}}(\phi) \big) .
\end{equation}
An \textit{enhancement} of $\phi$ is an irreducible representation $\rho$ of $\sS_\phi$.
Via the canonical map $\rZ(H^\vee_\sc) \to\rZ(\sS_\phi)$, every enhancement $\rho$ determines a character $\zeta_\rho$ of $\rZ(H^\vee_\sc)$.

On the other hand, the group $H$ is an inner twist of a unique quasi-split $K$-group $H^*$. The parametrization of equivalence classes of inner twists of $H^*$ by
\begin{equation} \label{eqn:Kot}
\H^1_c(W_K,H_\ad)\simeq \Irr\left(\rZ(H_\ad^\vee)^{W_K}\right)\end{equation}
provides a character $\zeta_H$ of $\rZ(H_\ad^\vee)^{W_K}$.
We choose an extension to a character $\zeta_H$ of $\rZ(H_\sc^\vee)$. (Such an extension is determined by an explicit construction of $H$ is inner twist of $H^*$.) Then we say that 
$(\phi,\rho)$ (or $\rho$)  is  $H(K)$-relevant if  $\zeta_\rho=\zeta_H^+$.

\begin{defn} \label{defn:enh}
A pair $(\phi,\rho)$, where $\phi$ is a Langlands parameter for $H(K)$ and $\rho$ is an $H(K)$-relevant irreducible representation of the group $\sS_\phi$ defined in {\rm (\ref{eqn:Sphi})}, is called an \textit{enhanced  $L$-parameter} for $(H,K)$.  We denote by $\Phi^\enh(H,K)$ the set of  $H^\vee$-orbits of enhanced $L$-parameters for $(H,K)$ for the following action of $H^\vee$:
\[h\cdot(\phi,\rho)=(h\phi h^{-1},\rho\circ\Ad(h^{-1}))\quad\text{for $h\in H^\vee$.}\]
\end{defn}

\begin{rem} {\rm
A notion of \textit{cuspidality} for enhanced  $L$-parameters was defined in \cite[Definition~6.9]{AMS}. Cuspidal $H(K)$-relevant enhanced  $L$-parameters are expected to parametrize the supercuspidal smooth irreducible representations of $H(K)$ (see \cite[Conjecture~6.10]{AMS}).}
\end{rem}

It is natural to request that  a LLC
\[
\begin{matrix}\lambda\colon& \Pi(H,K)&\to&\Phi(H,K) \cr
&\pi&\mapsto& \phi_\pi
\end{matrix} ,
\]  
may be \textit{enhanced} so that we obtain a bijection
\begin{equation} 
\label{eqn:lambdae} 
\begin{matrix}\lambda^\enh\colon& \Pi(H,K)&\to&\Phi^\enh(H,K) \cr
&\pi&\mapsto& (\phi_\pi,\rho_\pi)
\end{matrix} .
\end{equation}
For any $\phi\in\Phi(H,K)$, the elements in the $L$-packet $\Pi_{\phi}$ will then be parametrized by the set of isomorphism classes of $H(K)$-relevant  irreducible representations of the finite group $\sS_\phi$.

Feng, Opdam and Solleveld proved that the map $\Sh\colon \Phi(M,F)\to\Phi(G,E)$ extends naturally to a bijection 
\begin{equation} \label{eqn:She}
\Sh^\enh\colon\Phi^\enh(M,F)\to \Phi^\enh(G,E),
\end{equation}
and that $\Sh^\enh$ preserves cuspidality (see \cite[Lemma A4]{FOS}). 

\begin{defn} \label{defn:lambdae}  If there exists a bijection $\lambda_G^\enh\colon\Pi(G,E)\to\Phi^\enh(G,E)$,
 then the map $\lambda_M^\enh$ is defined to be the unique  map  for which the following diagram commutes
 \begin{eqnarray} \label{Diagram}
\begin{CD} \Pi(M,F)@ > \lambda_M^\enh >> \Phi^\enh(M,F)\\
@V \iota^* VV                       @VV \Sh^\enh V\\
\Pi(G,E)@> \lambda_G^\enh >> \Phi^\enh(G,E).
\end{CD}
\end{eqnarray}
\end{defn}
By construction $\lambda^\enh_M$ is a bijection and enhances the map $\lambda_M$ defined in Definition~\ref{lambda}.

\smallskip

We assume that $G(E)$ is quasi-split, that is, there is a Borel subgroup of $G$ defined over $E$.
Recall that a Whittaker datum for $G(E)$ is a $G(E)$-conjugacy class of pairs $(B,\theta)$, where $B$ is a Borel subgroup of $G$ defined over 
$E$ with unipotent radical $U$, and $\theta$ is a non-degenerate character $U(E) \to \C^\times$.    Whittaker datum $\fw = (B,\theta)$, an admissible representation
$\pi \in \Pi(G(E))$ is called $\mathfrak{w}$-generic if $\Hom_{U(E)}(\pi, \theta) \neq 0$.  

 We attempt to lift $\fw$ from $G(E)$ to $M(F)$.   Note first that the Weil-restricted group $(\mathfrak{R}_{E/F} B)$
 is a Borel subgroup of $M$, see \cite[\S 5.2]{Bor}.  Thus $M$ is a quasi-split $F$-group. We know that  $B(E)$ and $(\fR_{E/F} B)(F)$ are isomorphic topological groups. 

 We have an exact sequence $1\to U\to B\to T\to 1$, with $T$ a maximal torus. It gives an exact sequence 
 \[1\to \fR_{E/F}U\to \fR_{E/F}B\to \fR_{E/F}T\to 1,\]
 as checked for instance in \cite[A.3.2]{Oes}. Since $\fR_{E/F} T$ is maximal torus of $\fR_{E/F}B$, we have an exact sequence
 \[1\to \Ru(\fR_{E/F}B)\to \fR_{E/F}B\to \fR_{E/F}T\to 1,\]
 where $\Ru(\fR_{E/F}B)$ denotes the unipotent radical  of $\fR_{E/F} B$
 It follows that \[\fR_{E/F}U=\Ru(\fR_{E/F}B). \]
 So $\theta$ is a well-defined non-degenerate character of $(\fR_{E/F} U)(F)=U(E)$. This leads to the following definition:
\begin{equation} \label{eqn:Witt}
\fw_{E/F}: = (\fR_{E/F}B,\theta).
\end{equation}
Then $\fw_{E/F}$ is a Whittaker datum for the Weil-restricted group $M(F)$.  

Since $G(E)$ and $M(F)$ are isomorphic as topological groups, we have 
 \[\Hom_{(\fR_{E/F} U)(F)}(\pi, \theta) = \Hom_{U(E)}(\iota^* \pi, \theta).\]

Therefore, $\pi$ is $\fw_{E/F}$-generic if and only if $\iota^* \pi$ is $\fw$-generic.  
In particular, the set $\Pi_{\phi}$ contains a unique $\mathfrak{w}$-generic constituent if and only if 
the set $\Pi_{\Sh^{-1}(\phi)}$ contains a unique $\mathfrak{w}_{E/F}$-generic constituent, in conformity with 
Conjecture C in Kaletha's article \cite{Kal}.

\smallskip

We write  $\sR_\phi := \pi_0 \big( \rZ_{H^\vee}(\phi) / \rZ(H^\vee)^{W_K} \big)$. The map $H^\vee_\sc \to H^\vee_\ad$ induces a homomorphism $\sS_\phi \to \sR_\phi$ and   $\sS_\phi$ is a central extension of $\sR_\phi$ by  $\rZ(H^\vee_\sc) / \rZ(H^\vee_\sc) \cap \rZ_{H^\vee_\sc}(\phi)^\circ$ (see \cite[Lemma 1.7]{ABPSConj}).

From now on we assume that the characteristic of $E$ is zero, that $G(E)$ is quasi-split, and that a Whittaker datum $\fw=(B,\theta)$ for $G(E)$ is fixed.  
Then the expected parametrization  reduces to bijections
\begin{equation} \label{eqnLL_G}
\begin{matrix}
\bij_\phi\colon& \Pi_\phi&\to &\Irr(\sR_\phi)\cr
&\pi&\mapsto&\rho_\pi
\end{matrix},
\end{equation}
for all $\phi\in \Phi(H,K)$, where $\Irr(\sR_\phi)$ denotes the set  irreducible characters of $\sR_\phi$.

Then we can form for any $\phi\in\Phi_\bd(H,K)$ and $r\in \sR_\phi$ the virtual character
\begin{equation} \label{eqn:virtual}
\Theta_\phi^r:=\sum_{\pi\in \Pi_\phi}\,(\bij_\phi(\pi))(r)\,\Theta_{\pi},
\end{equation}
where $\Theta_{\pi}$ is the Harish-Chandra distribution character of $\pi$. 

As observed in \cite[(A.22)]{FOS},  for any $\phi\in\Phi(M,F)$, we have a canonical isomorphism
\begin{equation} \label{eqn:RR}
{}^L \iota\colon\sR_{\Sh(\phi)} \isom \sR_\phi.
\end{equation}
We define a bijection
\begin{equation} \label{eqn:Liota}
{}^L \iota^*\colon \Irr(\sR_\phi)\isom\Irr(\sR_{\Sh(\phi)}),
\end{equation}
by setting
\begin{equation} \label{eqn:equl} 
({}^L\iota^*(\rho))(r'):=\rho({}^L \iota(r')),\quad \text{for any $\rho\in \Irr(\sR_\phi)$ and any $r'\in\sR_{\Sh(\phi)}$.}
\end{equation}

\begin{prop} \label{prop:HC}
Let $\phi\in \Phi(M,F)$. We assume that there exists a bijection $\bij_\phi$ as in (\ref{eqnLL_G}). 
Then, for any $f\in\sC_c^\infty(M(F))$, we have 
\[\Theta^r_\phi(f)=\Theta^{{}^L\iota(r)}_{\Sh(\phi)}(\iota^*f), \quad \text{for any $r\in\sR_\phi$,}\]
where $\iota^*f\colon G(E)\to \C$ is the function defined by 
$(\iota^* f)(g):=f(\iota(g))$ for $g\in G(E)$.
\end{prop}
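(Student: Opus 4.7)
The plan is to verify the identity by expanding both sides term by term and matching them under the natural bijection induced by $\iota^*$ on $L$-packets. First I would write out
\[
\Theta^r_\phi(f)=\sum_{\pi\in\Pi_\phi}\bij_\phi(\pi)(r)\,\Theta_\pi(f),\qquad \Theta^{{}^L\iota(r)}_{\Sh(\phi)}(\iota^*f)=\sum_{\sigma\in\Pi_{\Sh(\phi)}}\bij_{\Sh(\phi)}(\sigma)({}^L\iota(r))\,\Theta_\sigma(\iota^*f),
\]
and reduce the proof to three compatibilities: (a) the packets $\Pi_\phi$ and $\Pi_{\Sh(\phi)}$ are identified via $\iota^*$; (b) the distribution characters transport correctly, $\Theta_\pi(f)=\Theta_{\iota^*\pi}(\iota^*f)$; and (c) the refined parametrizations are intertwined by ${}^L\iota^*$, namely $\bij_\phi(\pi)(r)=\bij_{\Sh(\phi)}(\iota^*\pi)({}^L\iota(r))$.

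Compatibility (a) is an immediate consequence of the commutative diagram in Definition~\ref{lambda}: restriction of $\iota^*$ to the fiber of $\lambda_M$ over $\phi$ gives a bijection onto the fiber of $\lambda_G$ over $\Sh(\phi)$. Compatibility (b) follows from the fact that $\iota\colon G(E)\to M(F)$ in~(\ref{RR1}) is an isomorphism of locally compact totally disconnected groups; consequently any Haar measure on $M(F)$ pulls back to one on $G(E)$, the map $f\mapsto\iota^*f$ is an isomorphism of Hecke algebras, and $\pi(f)=(\iota^*\pi)(\iota^*f)$ as operators on the common underlying space, whose trace therefore agrees. Compatibility (c) is forced by Definition~\ref{defn:lambdae}, which expresses $\lambda_M^\enh$ as $(\Sh^\enh)^{-1}\circ\lambda_G^\enh\circ\iota^*$, together with the formula~(\ref{eqn:equl}) defining ${}^L\iota^*$: it says precisely that $\bij_{\Sh(\phi)}(\iota^*\pi)={}^L\iota^*(\bij_\phi(\pi))$, which, evaluated at a point of $\sR_{\Sh(\phi)}$, yields the desired identity.

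Plugging (a), (b), (c) into the two expansions and substituting $\sigma=\iota^*\pi$ produces a term-by-term coincidence of the two sums, which proves the proposition.

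The only real subtlety, rather than obstacle, is purely notational: the isomorphism ${}^L\iota\colon\sR_{\Sh(\phi)}\isom\sR_\phi$ of~(\ref{eqn:RR}) has a preferred direction, and one must be careful to apply it (or its inverse) consistently when comparing evaluations at $r$ and at ${}^L\iota(r)$. Once the direction conventions are pinned down, the argument reduces to bookkeeping that combines the commutativity of the enhanced Langlands diagram with the elementary change-of-variables invariance of Harish-Chandra characters under the topological group isomorphism $\iota$.
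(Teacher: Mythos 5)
Your proof is essentially the paper's argument: expand both character sums, identify the packets $\Pi_\phi$ and $\Pi_{\Sh(\phi)}$ via $\iota^*$, transport Harish--Chandra characters along the topological isomorphism $\iota$, and use the compatibility $\bij_{\Sh(\phi)}(\iota^*\pi)={}^L\iota^*(\bij_\phi(\pi))$ together with~(\ref{eqn:equl}); substituting $\sigma=\iota^*\pi$ matches the two sums termwise. One small point on your compatibility (c): the proposition assumes only the existence of $\bij_\phi$ for the fixed $\phi$, not the full enhanced correspondence $\lambda_M^\enh$ of Definition~\ref{defn:lambdae}. The paper therefore \emph{defines} $\bij_{\Sh(\phi)}$ as the unique map making the diagram~(\ref{DiagramRR}) commute, i.e.\ $\bij_{\Sh(\phi)}:={}^L\iota^*\circ\bij_\phi\circ(\iota^*)^{-1}$, after which your identity holds by construction. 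Invoking $\lambda_M^\enh$ works but mildly overreaches the stated hypotheses; it does not affect the validity of the argument. You also correctly identified the only delicate point, the direction of ${}^L\iota\colon\sR_{\Sh(\phi)}\isom\sR_\phi$, which the paper itself treats a bit loosely (writing $r'={}^L\iota^{-1}(r)$ in the proof while displaying ${}^L\iota(r)$ in the statement).
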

\begin{proof}
Let $\bij_{\Sh(\phi)}\colon\Pi_{\Sh(\phi)}\to \Irr(\sR_{\Sh(\phi)})$ denote the unique map which 
makes the following diagram commutative:
\begin{eqnarray} \label{DiagramRR}
\begin{CD} \Pi_\phi@ > \bij_\phi>> \Irr(\sR_\phi)\\
@V \iota^* VV                       @VV {}^L \iota^* V\\
\Pi_{\Sh(\phi)} @ > \bij_{\Sh(\phi)} >> \Irr(\sR_{\Sh(\phi)})
\end{CD} .
\end{eqnarray}
Let $r\in\sR_\phi$. We write $r':={}^L\iota^{-1}(r)$. Then we obtain that
\[\Theta_{\Sh(\phi)}^{r'}=\sum_{\pi'\in \Pi_{\Sh(\phi)}}(\bij_{\Sh(\phi)}(\pi'))(r')\,\Theta_{\pi'}=
\sum_{\pi\in \Pi_{\phi}}(\bij_{\Sh(\phi)}(\iota^*\pi))(r')\,\Theta_{\iota*\pi}.\]
We observe that, for any $\pi\in \Pi(M,F)$, we have, for any $f\in\sC_c^\infty(M(F))$:
\begin{equation} \label{eqn:HCchar}
\Theta_{\pi}(f)=\Theta_{\iota^*\pi}(\iota^*f).
\end{equation}
Using the commutativity of the diagram (\ref{DiagramRR}), we get
\[\Theta_{\Sh(\phi)}^{r'}(\iota^* f)=\sum_{\pi\in \Pi_{\phi}}({}^L\iota^*(\bij_{\phi}(\pi)))(r')\,\Theta_{\iota^*\pi}(\iota^*f).\]
By using (\ref{eqn:equl}) and (\ref{eqn:HCchar}), we finally get that
$\Theta_{\Sh(\phi)}^{{}^L\iota(r)}(\iota^* f)=\Theta_\phi^r(f)$, for any $r\in\sR_\phi$.
\end{proof}

\smallskip

\textsc{The Hiraga Ichino Ikeda conjecture \cite{HII}.}
We fix an additive character $\psi\colon K\to \C^\times$ which is trivial on the ring of integers $\fo_K$ and endow $K$ with the Haar measure that gives its ring of integers volume $1$. 

\begin{defn} \label{defn:HII}
A  $\LLC$ correspondence $\lambda_H$ satisfies the $\HII$ conjecture for $\psi$ if for any square-integrable modulo centre representation $\omega$ of $L(K)$,  the formal degree  of $\omega$ is
\[
\fdeg
(\omega)=\dim(\rho_\omega)\,|\sR_{\phi_\omega}|^{-1}\,\gamma(0,\Ad_{H^\vee,L^\vee} \circ\phi_\omega,\psi),
\]
where $\lambda_H^\enh(\omega)=(\phi_\omega,\rho_\omega)\in\Phi^\enh(L,K)$,  and where $\Ad_{H^\vee,L^\vee}$ is the adjoint representation of the group ${}^L L$ on the quotient of the Lie algebra of $H^\vee$ by that of $\rZ(L^\vee)^{W_K}$ and $\gamma(0,\Ad_{H^\vee,L^\vee} \circ\phi_\omega,\psi)$ is the corresponding adjoint $\gamma$-factor.
\end{defn}

\smallskip

\textsc{Transfer.}
We assume from now that $H$ is a quasi-split $K$-group.
A semisimple element in $H(K)$ is called \textit{strongly regular} if its centralizer is a torus.  
We denote by $H(K)_\sr$ the open subvariety of $H(K)$ fsting of the strongly regular semisimple elements. 

Let $\gamma\in H(K)_\sr$ and let $H(K)_\gamma$ denote its centralizer  in $H(K)$.  
Let $f\in\sC_c^\infty(H(K))$. 

The orbital integral $\rO_\gamma(f)$ of $(f,\gamma)$ is 
\[\rO_\gamma(f):=\int_{H_\gamma(K)\backslash H(K)}f(x^{-1}\gamma x)d\dot x,\]
where $d\dot x$ is an invariant measure on the quotient $H(K)_\gamma\backslash H(K)$.

The stable orbital integral $\SO_\gamma(f)$ of $(f,\gamma)$ is
\[\SO_\gamma(f):=\sum_{\gamma'\in S(\gamma)} \rO_{\gamma'}(f),\]
where $S(\gamma)$ is a set of representatives for the $H(K)$-conjugacy classes of $\gamma$ in its $H(K^\sep)$-conjugacy class (so-called the \textit{stable conjugacy} class of $\gamma$).

We recall from \cite[Def.~2]{Kal} that an \textit{extended endoscopic triple} for $(H,K)$ is a triple $\fe=(H_\fe,s,{}^L\eta)$, where:
\begin{itemize}
\item[$\bullet$]
$H_\fe$ is a quasi-split connected reductive $K$-group, 
\item[$\bullet$]
$s$ is a semisimple element in $H^\vee$, 
\item[$\bullet$]
${}^L\eta\colon {}^LH_\fe\to{}^LH$ is an $L$-homomorphism of $L$-groups (as in \cite[\S 15.1]{Bor}) that restricts to an isomorphism of complex reductive groups $H_\fe^\vee\isom \rZ_{H^\vee}(s)^\circ$,  such that ${}^L\eta(h)$ commutes with $s$, for any $h\in {}^LH_{\fe}$. 
\end{itemize}

Let $\fe=(H_\fe,s,{}^L\eta)$ be an extended endoscopic triple for $(H,K)$. We have $\phi(W_K')\subset {}^L\eta(W'_K)$, since ${}^L\eta\colon{}^LH_\fe\to {}^LH$. It follows that 
$s\in\rZ_{H^\vee}(\phi)$.  We denote by $\overline s$ the image of $s$ in $\sR_\phi$. 

Let $\fw$ be a Whittaker datum for $H(K)$.  
We recall that $f_\fe\in\sC_c^\infty(H_\fe(K))$ is called a \textit{transfer} of $f\in\sC_c^\infty(H(K))$ if for all $\gamma\in H_\fe(K)_\sr$ we have
\[\SO_\gamma(f_\fe)=\sum_{\delta}\Delta[\fw,\fe](\gamma,\delta)\,\rO_\delta(f),\] 
where $\delta$ runs over the set of conjugacy classes in $H(K)_\sr$, and  where \[\Delta(\fw,\fe)\colon H_\fe(K)_\sr\times H(K)_\sr\to \C\]
is the Langlands-Shelstad \textit{transfer factor} associated to $\fw$.

\begin{defn} \label{defn:refined LLC}
An enhanced $\LLC$ for $(H,K)$ is a bijection
\[\begin{matrix}\lambda_H^\enh \colon & \Pi(H,K) &\to& \Phi^\enh(H,K)\cr
&\pi&\mapsto& (\phi_\pi,\rho_\pi)
\end{matrix}
\]
such that the map $\lambda_H\colon \pi\mapsto\phi_\pi$ is a $\LLC$, and the following extra properties hold:
\begin{enumerate}
\item[{\rm (1)}] $\lambda_H$ satisfies the $\HII$ conjecture for square integrable modulo center representations.
\item[{\rm (2)}] For any Whittaker datum $\fw$ for $(H,K)$, and all $\phi\in \Phi_\bd(H,K)$ the $L$-packet $\Pi_{\phi}:=\lambda_H^{-1}(\phi)$ contains a unique $\mathfrak{w}$-generic constituent.
\item[{\rm (3)}] $\lambda_H^\enh$ restricts  to a bijection from the set of isomorphism classes of supercuspidal irreducible representations of $H(K)$ to the set of $H^\vee$-conjugacy classes of cuspidal enhanced $L$-parameters.
\item[{\rm (4)}] For any $\phi\in \Phi(H,K)$, the map 
\[\begin{matrix}\bij_\phi\colon &\Pi_\phi&\to& \Irr(\sS_\phi)\cr
&\pi&\mapsto& \rho_\pi\end{matrix}\]
is a bijection,
\item[{\rm (5)}] 
When $H$ is quasi-split over $K$, for any extended endoscopic triple $\fe=(H_\fe,s,{}^L\eta)$ for $(H,K)$, there exists a bijection
\[\lambda_{H_\fe}^\enh \colon \Pi(H_\fe,K) \to \Phi^\enh(H_\fe,K)\] 
which satisfies the analogs of {\rm (1)}--{\rm (4)} for $(H_\fe,K)$, and a Whittaker datum $\fw_\fe$ for $(H_\fe,K)$, such  that
\begin{itemize} 
\item[(a)]  for $\phi_\fe\in \Phi_\bd(H_\fe,K)$,  the character $\rho_{\pi^\gen_\fe}$ is trivial if  $\pi_\fe^\gen$ is the $\fw_\fe$-generic constituent of $\Pi_{\phi_\fe}$,
\item[(b)]  for any pair $(f_\fe,f)\in \sC_c^\infty(H_\fe(K))\times \sC_c^\infty(H(K))$ of functions such that $f_\fe$ is a transfer of $f$, we have the equality
 \[
 \Theta_{\phi_\fe}^1(f_\fe)=\Theta_{{}^L\eta\circ\phi_\fe}^{\overline{s}}(f).
 \]
 \end{itemize}
 \end{enumerate}
 \end{defn}

\begin{thm} \label{Main1}   Consider a bijection
\[
\lambda_G^\enh\colon\Pi(G,E) \longrightarrow \Phi^\enh(G,E).
\]
Then

\begin{enumerate}
\item[{\rm (i)}] The map $\lambda_M^\enh$ defined in~{\rm (\ref{defn:lambdae})} is an enhanced $\LLC$ for $(M,F)$ if and only if $\lambda_G^\enh$ is an enhanced $\LLC$ for $(G,E)$.
\item[{\rm (ii)}] Furthermore, if $\pi \in \Pi(M,F)$  and $\dep(\lambda_G(\iota^* \pi))  =  \kappa_\pi \cdot \dep(\iota^* \pi)$ then we have 
\[
 \dep(\lambda_M(\pi))  = \varphi_{E/F}(\kappa_\pi \cdot e \cdot \dep(\pi)),
 \]
where $\varphi_{E/F}$ is the Hasse-Herbrand function. 
\end{enumerate}
\end{thm}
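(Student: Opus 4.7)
The plan is to split the argument along the dichotomy of the statement: for part (i) work through the five defining properties of an enhanced $\LLC$ from Definition \ref{defn:refined LLC} and check that each one transports through the commutative square of Definition \ref{defn:lambdae}, and for part (ii) assemble the already-proved ingredients (Theorem \ref{depsh} and Corollary \ref{ethm} of the appendix) into a three-step chain.

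First I would observe that the two vertical arrows $\iota^{*}$ and $\Sh^{\enh}$ in the diagram of Definition \ref{defn:lambdae} are \emph{both} bijections: $\iota^{*}$ because it is induced by the topological isomorphism (\ref{RR1}), and $\Sh^{\enh}$ because of \cite[Lemma A4]{FOS}. Hence $\lambda_{M}^{\enh}$ is a bijection iff $\lambda_{G}^{\enh}$ is, which takes care of the basic bijectivity in condition (4). I would then address the five conditions of Definition \ref{defn:refined LLC} in turn. For (1), the formal degree is intrinsic to the abstract locally profinite group and is preserved by $\iota$; on the Galois side the adjoint $\gamma$-factor $\gamma(0,\Ad_{M^{\vee},L^{\vee}}\circ\phi,\psi_{F})$ is computed from an induced representation, and the standard Shapiro-type identity for local $\gamma$-factors under induction (combined with $\psi_{F}=\psi_{E}\circ\Tr_{E/F}$ on a suitable choice of additive character) reduces it to the corresponding adjoint $\gamma$-factor for $\Sh(\phi)$ over $E$. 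Condition (2) is the observation already made in the body of the section: $\fw_{E/F}=(\fR_{E/F}B,\theta)$ is Whittaker datum for $M(F)$ with $\Hom_{U(E)}(\pi,\theta)=\Hom_{(\fR_{E/F}U)(F)}(\iota^{*}\pi,\theta)$, so $\fw$-genericity is preserved and uniqueness transports. Condition (3) is immediate: $\iota^{*}$ preserves supercuspidality (intrinsic to the group) and $\Sh^{\enh}$ preserves cuspidality of enhanced parameters by \cite[Lemma A4]{FOS}. For the full bijection of $L$-packets in (4), I would upgrade (\ref{eqn:RR}) to the level of component groups $\sS_{\phi}$ by noting that Weil restriction identifies the $M^{\vee}$-centralizer of $\phi$ with the $G^{\vee}$-centralizer of $\Sh(\phi)$ (via the construction of $M^{\vee}=\Ind_{W_{E}}^{W_{F}}G^{\vee}$).

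The hard condition is (5), endoscopic transfer, which I expect to be the main obstacle. I would construct a canonical bijection between extended endoscopic triples $(M_{\fe},s,{}^{L}\eta)$ for $(M,F)$ and extended endoscopic triples $(G_{\fe'},s',{}^{L}\eta')$ for $(G,E)$ by declaring $M_{\fe}=\fR_{E/F}G_{\fe'}$ and matching the semisimple elements through the Shapiro identification $M^{\vee}\cong\Ind_{W_{E}}^{W_{F}}G^{\vee}$; the Whittaker datum $\fw_{\fe}$ is defined from $\fw_{\fe',E/F}$ as in (\ref{eqn:Witt}). The normalization condition (5)(a) then transports by the same genericity argument used in (2). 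For the character identity (5)(b), I would appeal to Proposition \ref{prop:HC}: given matching functions $f_{\fe}$ and $f$ on $M_{\fe}(F)$ and $M(F)$, their pull-backs $\iota^{*}f_{\fe}$ and $\iota^{*}f$ are matching functions on $G_{\fe'}(E)$ and $G(E)$ (matching of orbital integrals and stable orbital integrals is preserved by $\iota$ because $\iota$ is a group isomorphism and the Langlands–Shelstad transfer factor depends only on data compatible with Weil restriction, cf.\ Shapiro's lemma applied to the relevant cohomology). Then $\Theta_{\phi_{\fe}}^{1}(f_{\fe})=\Theta_{\Sh(\phi_{\fe})}^{1}(\iota^{*}f_{\fe})$ and $\Theta_{{}^{L}\eta\circ\phi_{\fe}}^{\overline{s}}(f)=\Theta_{\Sh({}^{L}\eta\circ\phi_{\fe})}^{{}^{L}\iota(\overline{s})}(\iota^{*}f)$, and (5) for $(M,F)$ becomes exactly (5) for $(G,E)$.

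For part (ii) the computation is a short chain. By Fintzen's Corollary \ref{ethm}, $\dep(\iota^{*}\pi)=e\cdot\dep(\pi)$. The hypothesis gives $\dep(\lambda_{G}(\iota^{*}\pi))=\kappa_{\pi}\cdot\dep(\iota^{*}\pi)=\kappa_{\pi}\cdot e\cdot\dep(\pi)$. Commutativity of the diagram in Definition \ref{lambda} reads $\Sh(\lambda_{M}(\pi))=\lambda_{G}(\iota^{*}\pi)$, and Theorem \ref{depsh} says $\dep(\Sh(\psi))=\psi_{E/F}(\dep(\psi))$; inverting, since $\varphi_{E/F}$ is the inverse of $\psi_{E/F}$, one obtains
\[
\dep(\lambda_{M}(\pi))=\varphi_{E/F}\!\bigl(\dep(\lambda_{G}(\iota^{*}\pi))\bigr)=\varphi_{E/F}\!\bigl(\kappa_{\pi}\cdot e\cdot\dep(\pi)\bigr),
\]
which is the stated formula. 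The only subtlety here is to confirm that Theorem \ref{depsh} is indeed invertible (i.e.\ that $\psi_{E/F}$ is a strictly increasing bijection of $[0,\infty)$, which it is by its piecewise-linear description recalled before Lemma \ref{comp}); no further work is required.
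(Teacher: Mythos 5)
Your proposal follows essentially the same route as the paper: for (i) one checks the five conditions of Definition~\ref{defn:refined LLC} one by one through the commutative square of Definition~\ref{defn:lambdae}, and for (ii) one chains Corollary~\ref{ethm}, the commutativity of the diagram in Definition~\ref{lambda}, and Theorem~\ref{depsh}, using that $\varphi_{E/F}=\psi_{E/F}^{-1}$. The one place where the paper is considerably more explicit is condition~(5), where it constructs the endoscopic datum $(M_\fe, s_{E/F}, {}^L\eta_{E/F})$ from $(G_\fe,s,{}^L\eta)$ in detail following Lemaire--Waldspurger (passing a $1$-cocycle in $\cW^\vee$ through the Shapiro isomorphism to build $s_{E/F}$ and $\sM_\fe$) and invokes Waldspurger's Lemme~5.4 for the coincidence of transfer factors, whereas you leave that correspondence at the level of a sketch; and for condition~(1) the paper simply cites [FOS, Proposition~A.7] rather than re-deriving the adjoint $\gamma$-factor identity as you outline.
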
 

\begin{proof}   
It is proved in  \cite[Proposition~A.7]{FOS} that, for any finite separable field extension $E/F$,  the HII conjecture holds for $\omega$ a square-integrable modulo center irreducible representation of $M(F)$  if and only if  its holds for $\iota^*(\omega)$: it shows that $\lambda_G$ satisfies 
Definition~\ref{defn:refined LLC}~(1) if and only if $\lambda_M$ satisfies it.
  
We have already seen that Definition~\ref{defn:refined LLC}~(2)  is satisfied by $\lambda_G$ if and only if it is satisfied by $\lambda_M$.  Since $\Sh^\enh$ preserves the cuspidality, Definition~\ref{defn:refined LLC}~(3) is satisfied by $\lambda_G^\enh$ if and only if it is satisfied by $\lambda_M^\enh$.  

We write $\lambda_M^\enh(\pi)=(\phi_\pi,\rho_\pi)$ for $\pi\in \Pi(M,F)$ and $\bij_\phi(\pi)=\rho_\pi$ for $\phi=\phi_\pi$. Then Definition~\ref{defn:refined LLC}~(4) is satisfied by  $\bij_\phi$ if and only if it is satisfied by $\bij_{\Sh(\phi)}$, where
\[\bij_{\Sh(\phi)}(\iota^*\pi):={}^L\iota^*(\rho_{\pi}),\]
and ${}^L \iota^*$ is the natural bijection between $\Irr(\sS_\phi)$ and $\Irr(\sS_{\Sh(\phi)})$.

When $G$ is qusi-split over $E$, let $\fw$ be a Whittaker datum for $(G,E)$, and let $\fw_{E/F}$ be its lift to $(M,F)$ as in (\ref{eqn:Witt}). Let $\phi\in\Phi_\bd(M,F)$. Then the  diagram~(\ref{DiagramRR}) shows  that the existence of a bijection $\bij_\phi\colon \Pi_\phi\to \Irr(\sR_\phi)$ satisfying Definition~\ref{defn:refined LLC}~(3).a  is equivalent to the existence of a bijection $\bij_{\Sh(\phi)}\colon\Pi_{\Sh(\phi)}\to \Irr(\sR_{\Sh(\phi)})$ satisfying Definition~\ref{defn:refined LLC}~(5).a.  Indeed, as already observed, $\pi^\gen\in\Pi_\phi$ is $\fw_{E/F}$-generic if and only if $\iota^* \pi^\gen$ is $\fw$-generic. On the other hand, 
 the diagram~(\ref{DiagramRR}) implies that $\bij_\phi(\pi^\gen)$ is the trivial character of $\sR_\phi$ if an only if $\bij_{\Sh(\phi)}(\iota^*\pi^\gen)$ is the trivial character of $\sR_{\Sh(\phi)}$, since  (\ref{eqn:equl}) shows that ${}^L \iota^*$ maps the trivial character of $\sR_\phi$ to that of $\sR_{\Sh(\phi)}$.

We fix a $\Gamma_E$-stable Borel pair $(B^\vee,T^\vee)$ in $G^\vee$ and an extended endoscopic triple $\fe=(G_\fe,s,{}^L\eta)$ for $(G,E)$ such that $s'\in T^\vee$.
The following construction is based on \cite[\S~1.2]{LW}.  
Every Borel pair in $M^\vee$ can be written as $(I_{\Gamma_E}^{\Gamma_F}(B^\vee), I_{\Gamma_E}^{\Gamma_F}(T^\vee))$ for a well-determined Borel pair $(B^\vee,T^\vee)$ in $G^\vee$. It is $\Gamma_F$-stable if and only if the pair $(B^\vee,T^\vee)$ is $\Gamma_E$-stable.
Setting $\cW^\vee:=\Nor_{G^\vee}(T^\vee)/T^\vee$, we have 
\[\Nor_{M^\vee}(\fR_{E/F}(T^\vee))/\fR_{E/F}(T^\vee)=I_{\Gamma_E}^{\Gamma_F}(\cW^\vee).\]
We denote by $\tau\mapsto\tau_G$ the natural action of $\Gamma_E$ on $G^\vee$ and we
 set $B_\fe^\vee:=B^\vee\cap G_\fe^\vee$. Then $(B_\fe^\vee,T^\vee)$ is a Borel pair in $G_\fe$.
Let $\tau\in \Gamma_E$. For each $v'\in W_E$ in the inverse image of $\tau$ under the natural map $W_E\to\Gamma_E$, we choose an element $(g(v'),v')$ of $G^\vee_\fe\rtimes W_E={}^LG_\fe$ such that the automorphism $\Int_{g(v')}\circ\tau_G$ preserves the Borel pair $(B_\fe^\vee,T^\vee)$.
The coset  $T^\vee g (v')$ being well-determined, there is a well-determined element
$w_{G_\fe} (\tau)$ of $\cW^\vee$ such that the conjugacy action of $(g(v'),v')$ on $T^\vee$ is given by  $w_{G_\fe} (\tau)\tau_G$, where we identify 
$w_{G_\fe} (\tau)$ with the automorphism $\Int_{w_{G_\fe} (\tau)}$ of $T^\vee$. 
The map $\tau\mapsto w_{G_\fe} (\tau)$ is a $1$-cocycle. Let $\alpha\in \rH^1(\Gamma_E,\cW^\vee)$ denote its cohomology class, and
let $\Sh^{-1}(\alpha)$ be the inverse image of $\alpha$ under the Shapiro isomorphism
\[\Sh\colon \rH^1(\Gamma_F,I_{\Gamma_E}^{\Gamma_F}(\cW^\vee))\to\rH^1(\Gamma_E,\cW^\vee).\]
We choose a $1$-cocycle $w_{G_\fe,M}$ of $\Gamma_F$ with values in $I_{\Gamma_E}^{\Gamma_F}(\cW^\vee)$ which belongs to the cohomology class of $\alpha$. Up to replacing $w_{G_\fe,M}$ by a cohomologous $1$-cocycle, we may, and do, assume that
\begin{equation} \label{eqn:wG}
(w_{G_\fe,M}(\tau))(1)=w_{G_\fe}(\tau)\,\quad\text{for any $\tau\in\Gamma_E$}.
\end{equation}
Let $\sigma\mapsto \sigma_{G_\fe,M}$ be the action of $\Gamma_F$ on the torus $I_{\Gamma_E}^{\Gamma_F}(T^\vee)$ of $M^\vee$ defined by
\begin{equation} \label{eqn:action}
\sigma_{G_\fe,M}:=w_{G_\fe,M}(\sigma)\,\sigma_M,
\end{equation}
where $\sigma\mapsto\sigma_M$ is the natural action of $\Gamma_F$ on $M^\vee$. 
It allows to define an application $s_{E/F}$ from $\Gamma_F$ to $T^\vee$ by sending $\sigma$ to 
\begin{equation} \label{eqn:semisimples}
s_{E/F}(\sigma):=w_{G_\fe,M}(\sigma)(1)^{-1}\,(s).
\end{equation}
In particular,  $s_{E/F}$ belongs to $I_{\Gamma_E}^{\Gamma_F}(T^\vee)$, 
is fixed by the action $\sigma\mapsto \sigma_{G_\fe,M}$ and we have  $s_{E/F}(1)=s$. We set 
\begin{equation} \label{eqn:Me}
M_\fe^\vee:=\rZ_{M^\vee}(s_{E/F})^\circ.
\end{equation}
Then $M_\fe^\vee\cap I_{\Gamma_E}^{\Gamma_F}(B^\vee)$ is a Borel subgroup of $M^\vee_\fe$. For each $v\in W_F$ with image $\sigma$ in $\Gamma_F$, we choose a representative $\tilde w_{G_\fe,M}(v)=\tilde w_{G_\fe,M}(\sigma)$ of $w_{G_\fe,M}(\sigma)$ in $\Nor_{M^\vee}(I_{\Gamma_E}^{\Gamma_F}(T^\vee))$. 

The automorphism $\Int_{\tilde w_{G_\fe,M}(\sigma)}\circ\tau_M$ preserves the Borel pair $(M_\fe^\vee\cap I_{\Gamma_E}^{\Gamma_F}(B^\vee),I_{\Gamma_E}^{\Gamma_F}(T^\vee))$ of $M^\vee_\fe$. 
We define
\begin{equation} \label{eqn:calM}
\sM_\fe:=\left\{(m\,\tilde w_{G_\fe,M}(\sigma)(v),v)\,:\,m\in M_\fe^\vee, v\in W_F\right\}\,\subset\, {}^LM.
\end{equation}
The set $\sM_\fe$ is a group which normalizes $M_\fe^\vee$. Thus we can deduce an $L$-action of $\Gamma_F$ on  $M^\vee_\fe$, and hence form the semidirect product $M^\vee_\fe\rtimes W_F$. Then let $M_\fe$ be a quasi-split connected reductive  $F$-group which has $M^\vee_\fe\rtimes W_F$ as $L$-group. We have $M_\fe=\fR_{E/F}(G_\fe)$.
Let $\lambda_{G_\fe}$ be a LLC for $(G_\fe,E)$. Then the arguments above, applied to $(G_\fe,E)$, show that $\lambda_{G_\fe}$ satisfies the properties (1), (2) and (3).a. 

Let ${}^L\eta_{E/F} \colon {}^LM_\fe\to{}^L M$ be an $L$-homomorphism of $L$-groups such that $\fe_{E/F}=(M_\fe,s_{E/F},{}^L\eta_{E/F})$ is an extended endoscopic triple for $(M,F)$, then 
the diagram
\begin{equation} \label{diag}
\begin{CD} \Phi(M_\fe,F)@ > \Phi({}^L\eta_{E/F})>> \Phi(M,F)\\
@V \Sh VV                       @VV  \Sh V\\
\Phi(G_\fe,E) @ > \Phi({}^L\eta) >> \Phi(G,E)
\end{CD} 
\end{equation}
 where 
\[
\Phi({}^L\eta)\colon\phi_\fe\mapsto {}^L\eta\circ\phi_\fe\quad\text{and}\quad\Phi({}^L\eta_{E/F})\colon\phi_\fe'\mapsto {}^L\eta_{E/F}\circ\phi_\fe',
\]
is commutative, that is,  we have ${}^L\eta_{E/F}\circ \Sh(\phi_\fe)=\Sh({}^L\eta\circ\phi_\fe)$ for any $\phi_\fe\in \Phi(M_\fe,F)$.

Let $\iota_\fe \colon G_\fe(E)\isom M_\fe(F)$. 
The maps  $\iota$ and $\iota_\fe$ induce bijections $G(E)_\sr\isom M(F)_\sr$ and $G_\fe(E)_\sr\isom M_\fe(F)_\sr$, respectively.  Let $\delta\in G(E)_\sr$ and $\gamma\in G_\fe(E)_\sr$. 
We have
\[\iota(G(E)_\delta)\simeq M(F)_{\iota(\delta)}\quad\text{and}\quad \iota_\fe(G_\fe(E)_\gamma)\simeq M_\fe(F)_{\iota_\fe(\gamma)}.
\]
Let $(f,f_\fe)\in \sC_c^\infty(M(F))\times\sC_c^\infty(M_\fe(F))$ such that $f_\fe$ is a transfer of $f$. 
We denote by $\iota^*f\colon G(E)\to \C$ and $\iota_\fe^*f_\fe\colon G_\fe(E)\to \C$ the functions defined by 
\[(\iota^* f)(g):=f(\iota(g))\quad \text{and}\quad(\iota^*_\fe f_\fe)(g):=f_\fe(\iota_\fe(g_\fe)) \quad\text{for $g\in G(E)$ and $g_\fe\in G_\fe(E)$.}\]
We have $\iota^*f\in \sC_c^\infty(G(E))$ and $\iota_\fe^*f_\fe\in \sC_c^\infty(G_\fe(E))$. The transfer factors 
\[\Delta(\fw,\fe)\colon G_\fe(E)_\sr\times G(E)_\sr\to \C\quad\text{and}\quad\Delta(\fw_{E/F},\fe_{E/F})\colon M_\fe(F)_\sr\times M(F)_\sr\to \C\]
coincide (it was observed in \cite[Lemme~5.4]{Wal} in the Lie algebras case).
It follows that $f_\fe$  is a transfer of $f$ if and only if $\iota_\fe^*f_\fe$  is a transfer of $\iota^*f$.

Let $\phi_\fe\in\Phi(M_\fe,F)$. Then the combination of Proposition~\ref{prop:HC}  with the commutative diagram (\ref{diag}) implies that 
\[\Theta^1_{\phi_\fe}(f_\fe)=\Theta_{{}^L\eta_{E/F}\circ\phi_\fe}^{\overline s_{E/F}}(f)\quad \text{if and only if}\quad\Theta^1_{\Sh(\phi_\fe)}(\iota_\fe^*f_\fe)=\Theta_{^L\eta\circ\Sh(\phi_\fe)}^{\overline{s}}(\iota^*f),\]
that is, $\lambda_M$ satisfies Definition~\ref{defn:refined LLC}~(5).b if and only if $\lambda_G$ satisfies it.

The assertion (ii) follows from Corollary \ref{ethm},  Theorem \ref{depsh} and the commutativity of the diagram (\ref{Diagram}). 
\end{proof}

\smallskip

\smallskip

As a special case of Theorem~\ref{Main1}, we have 

\begin{thm}\label{Main2}  If $\lambda_G$ is depth-preserving, then, for all $\pi \in \Pi(M,F)$,  we have
\[
\dep (\lambda_M (\pi)) =  \varphi_{E/F}(e \cdot \dep( \pi))
\]   

In particular, we have
\begin{itemize}
\item $\dep(\lambda_M(\pi)) / \dep(\pi) \to 1 \quad \textrm{as} \quad \dep(\pi) \to \infty$ 
 \smallskip 
\item $\lambda_M$ is depth-preserving if and only if $E/F$ is tamely ramified,
\smallskip
\item if $E/F$ is wildly ramified then, for each  $\pi$ with $\dep(\pi)>0$,
 we have
\[
\dep(\lambda_M(\pi) ) > \dep(\pi).
\]
\end{itemize}
\end{thm}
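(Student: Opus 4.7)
The plan is to deduce the main identity as a direct specialization of Theorem~\ref{Main1}(ii), and then to extract the three bulleted consequences from elementary analysis of the Hasse--Herbrand function $\varphi_{E/F}$ and its inverse $\psi_{E/F}$ as they are described in the body of the paper.

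First, since $\lambda_G$ is depth-preserving, we have $\dep(\lambda_G(\iota^{*}\pi)) = \dep(\iota^{*}\pi)$ for every $\pi \in \Pi(M,F)$, i.e.\ $\kappa_\pi = 1$. Substituting $\kappa_\pi = 1$ into Theorem~\ref{Main1}(ii) yields
\[
\dep(\lambda_M(\pi)) \;=\; \varphi_{E/F}\bigl(e \cdot \dep(\pi)\bigr),
\]
which is the displayed formula. All that remains is to deduce the three bullets from quantitative properties of $\varphi_{E/F}$.

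The key is the explicit description of $\psi_{E/F}$ given in Section~\ref{sec:Shapiro}. Let $j$ denote the largest jump in the upper-numbering ramification filtration of $\Gamma = \Gal(E/F)$, and set $c := ej - \psi_{E/F}(j) \geq 0$. Then $\psi_{E/F}(x) = ex - c$ for all $x \geq j$, and inverting we obtain $\varphi_{E/F}(y) = y/e + c/e$ for all $y \geq \psi_{E/F}(j)$. Consequently, for $\dep(\pi)$ sufficiently large,
\[
\frac{\dep(\lambda_M(\pi))}{\dep(\pi)} \;=\; \frac{\varphi_{E/F}(e \cdot \dep(\pi))}{\dep(\pi)} \;=\; 1 + \frac{c}{e \cdot \dep(\pi)} \;\longrightarrow\; 1
\]
as $\dep(\pi) \to \infty$, which gives the first bullet.

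For the remaining two bullets, I would argue that $\psi_{E/F}(x) \leq ex$ on $[0,\infty)$ with equality throughout if and only if $E/F$ is tamely ramified. Indeed, if $j = 0$ then $\Gamma^w = 1$ for all $w > 0$, so the integrand $(\Gamma^0:\Gamma^w)$ equals $e$ on $(0,\infty)$ and $\psi_{E/F}(x) = ex$, so $\varphi_{E/F}(e \cdot \dep(\pi)) = \dep(\pi)$ for every $\pi$. Conversely, if $E/F$ is wildly ramified, then on the first open interval $(0, j_1)$ of constancy of the filtration we have $\Gamma^w = \Gamma^{0+} \neq 1$, so the slope $(\Gamma^0:\Gamma^w)$ is strictly less than $e$; since the slope never exceeds $e$ thereafter, $\psi_{E/F}(x) < ex$ for all $x > 0$. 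Equivalently, $\varphi_{E/F}(y) > y/e$ for $y > 0$, and hence $\dep(\lambda_M(\pi)) = \varphi_{E/F}(e \cdot \dep(\pi)) > \dep(\pi)$ whenever $\dep(\pi) > 0$. This simultaneously establishes the third bullet and the nontrivial direction of the second.

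The proof is essentially a transcription of Theorem~\ref{Main1}(ii) in the depth-preserving case together with four lines of calculus on $\psi_{E/F}$; the only point requiring care is to confirm that the slope of $\psi_{E/F}$ on the initial interval $(0, j_1)$ is genuinely less than $e$ in the wild case, which is immediate from $\Gamma^{0+} \neq 1$. Once that is in hand, no further obstacle arises.
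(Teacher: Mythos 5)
Your proof is correct and follows exactly the route the paper intends: the paper's own proof is the single sentence ``This follows from Theorem~\ref{Main1} and the properties of $\varphi_{E/F}$ recalled in \S\ref{sec:Shapiro},'' and you have simply supplied the details of both steps. The specialization $\kappa_\pi = 1$ in Theorem~\ref{Main1}(ii), the piecewise-linear form of $\psi_{E/F}$ beyond the last jump $j$, and the observation that $(\Gamma^0:\Gamma^w) < e$ on the first interval of wild ramification are all the right ingredients, and the calculus is carried out accurately.

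One microscopic point, worth a clause at most: the ``only if'' direction of the second bullet follows from the third bullet only once one notes that $\Pi(M,F)$ does contain representations of positive depth; the paper leaves this implicit and so can you, but if you wanted to be hermetic you could append a phrase to that effect.
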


\begin{proof} This follows from Theorem \ref{Main1} and the properties of $\varphi_{E/F}$ recalled  in \S\ref{sec:Shapiro}.   
\end{proof}

When $G(E) = \GL_1(E)$, Theorem \ref{Main2} strengthens the  main result of \cite{MiPa} for induced tori.   For tamely ramified induced tori, we recover the depth-preservation theorem  of Yu \cite{Yu1}.

 \section{Applications}   \label{sec:App}

\subsection{An inequality between depths} 

\begin{lem} \label{lem:ineq-depth}
Let $M$ and $\tM$ be two reductive $F$-groups such that $\tM$ is $F$-split and there exist an $L$-embedding
$u\colon  {}^LM\to{}^L\tM$ which satisfies the following property:
if $v \in W_F$ acts trivially on $M^\vee$, then we have $u(1,v) = (1,v)$. 

Then, for a given element $(m,v) \in {}^LM $: if  $u(m,v) = (1, v)$, then we have $m = 1$. In particular, for any $\phi\in\Phi(M,F)$, 
we have $u\circ\phi\in\Phi(\tM,F)$ and
\[\dep(u\circ\phi)\le\dep(\phi).\]
\end{lem}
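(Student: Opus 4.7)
Plan. Because $\tM$ is $F$-split, the $L$-group ${}^L\tM$ is the direct product $\tM^\vee \times W_F$, and every continuous $L$-homomorphism $u$ has the shape
\[
u(m, v) \;=\; \bigl(u_0(m)\, c(v),\ v\bigr),
\]
with $u_0 \colon M^\vee \to \tM^\vee$ an injective morphism of algebraic groups (since $u$ is an embedding) and $c \colon W_F \to \tM^\vee$ a continuous $1$-cocycle, which here is actually a continuous homomorphism because the $W_F$-action on $\tM^\vee$ is trivial. The multiplicativity of $u$ translates into the compatibility identity
\[
u_0({}^v m) \;=\; c(v)\, u_0(m)\, c(v)^{-1}, \qquad m \in M^\vee,\ v \in W_F,
\]
and the hypothesis of the lemma reads: $c(v) = 1$ whenever $v$ acts trivially on $M^\vee$.

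For the first assertion, I would assume $u(m, v) = (1, v)$, so $c(v) = u_0(m)^{-1}$; substituting into the compatibility identity and using injectivity of $u_0$ yields ${}^v m' = m^{-1} m' m$ for every $m' \in M^\vee$. Thus $v$ acts on $M^\vee$ as the inner automorphism induced by $m^{-1}$. Since the natural $W_F$-action on $M^\vee$ preserves the pinning furnished by the $L$-group datum, and since the only inner automorphism of a connected reductive complex algebraic group preserving a pinning is trivial (namely, conjugation by a central element), $v$ must act trivially on $M^\vee$. The hypothesis then gives $u(1, v) = (1, v)$, i.e., $c(v) = 1$, whence $u_0(m) = 1$ and $m = 1$ by injectivity of $u_0$.

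The admissibility of $u \circ \phi$ as an element of $\Phi(\tM, F)$ is inherited from $\phi$, as $u$ is continuous, respects the projection to $W_F$, and preserves semisimplicity. For the depth inequality, I set $r = \dep(\phi)$, choose a representative of $[\alpha_\phi]$ trivial on $W_F^{r+}$ with image in $(M^\vee)^{W_F^{r+}}$, and replace $\phi$ by the corresponding $M^\vee$-conjugate so that $\phi(v) = (1, v)$ for every $v \in W_F^{r+}$. Then $(u \circ \phi)(v) = u(1, v) = (c(v), v)$, and the goal is reduced to showing $c(v) = 1$ for such $v$. The mechanism for this is to invoke the first assertion: if one can locate $m_v \in M^\vee$ with $u_0(m_v) = c(v)^{-1}$, then $u(m_v, v) = (1, v)$, and part (i) forces $m_v = 1$, hence $c(v) = 1$ and $\alpha_{u\circ\phi}(W_F^{r+}) = \{1\}$, giving $\dep(u \circ \phi) \le r$. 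The principal obstacle I anticipate is verifying that $c(v)^{-1}$ belongs to $u_0(M^\vee)$ for $v \in W_F^{r+}$: I would leverage the fact that $v$ fixes the image $\alpha_\phi(W_F)$ pointwise (from the depth condition), together with the compatibility identity, to place $c(v)$ in the normalizer of $u_0(M^\vee)$ inside $\tM^\vee$, and then reuse the pinning-preservation argument from part (i) to conclude that the induced outer action is trivial, so $c(v) \in u_0(M^\vee)$.
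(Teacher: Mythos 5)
Your argument for the first assertion is correct and is essentially the paper's: both proofs decompose $u(m,v) = (u_0(m)c(v),v)$, use the multiplicativity of $u$ to reduce the hypothesis $u(m,v)=(1,v)$ to the equality $[v] = \Int(m^{-1})$ of automorphisms of $M^\vee$, invoke the fact that $[v]$ preserves a pinning to conclude $[v]=\Id$, and then use the lemma's hypothesis together with the injectivity of $u_0$ to get $m=1$.

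For the depth inequality, you have put your finger on precisely what the paper leaves unaddressed --- the paper simply asserts that the inequality ``follows from Definition~\ref{Oidef}'', which amounts to claiming that $c$ vanishes on $W_F^{r+}$ for $r=\dep(\phi)$. Your proposed remediation does not close this gap. You correctly observe that the compatibility identity places $c(v)$ in the normalizer of $u_0(M^\vee)$ and that conjugation by $c(v)$ induces, via $u_0$, the automorphism $[v]$ of $M^\vee$, which is pinned. But the step ``reuse the pinning-preservation argument from part (i) to conclude that the induced outer action is trivial'' does not follow: in part (i) the automorphism $[v]$ was simultaneously pinned \emph{and} inner (being equal to $\Int(m^{-1})$), and the intersection of pinned with inner automorphisms is trivial; here $[v]$ is only known to be pinned, and pinned automorphisms of a connected reductive group can be genuinely outer (the swap of the two factors of $\GL_n\times\GL_n$ is one). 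So $c(v)\in u_0(M^\vee)$ is not obtainable this way. Indeed, the depth of $\phi$ places no constraint at all on the action of $W_F^{r+}$ on $M^\vee$ (for instance the trivial parameter has depth $0$ while $W_F^{0+}$ can act nontrivially when $M$ is wildly ramified), so nothing forces $c|_{W_F^{r+}}$ to be trivial, and the asserted inequality genuinely requires an extra hypothesis on $u$ --- say $u(1,v)=(1,v)$ for \emph{all} $v\in W_F$, which is what actually holds for the Asai map $r_{\rmA}$, the one instance of $u$ to which the lemma is applied in the paper (where $r_{\rmA}(1,1,b)=(1\otimes 1,b)=(1,b)$ unconditionally).
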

\begin{proof} The conjugation isomorphism $\Int(1, v)$ of ${}^L\tM=M^\vee\rtimes W_F$ is trivial on the first factor $M^\vee$. On the other hand, since we have $u(m, v) = (1, v)$, it should restricts to $\Int(m,v)$ on $M^\vee$. This implies that the isomorphism $\Int(m)\circ [v]$ of $M^\vee$ is the identity, where $[v]$ denotes the action of $v \in W_F$ on $M^\vee$.
Here we recall that the action of $W_F$ on $M^\vee$ is defined by using a fixed pinning. More precisely, we have a canonical exact sequence
\[1\to \Int(M^\vee) \to \Aut(M^\vee) \to \Out(M^\vee) \to 1.\]
As $M$ is defined over $F$, we get an action of $W_F$ on its root data, hence we have a homomorphism $W_F \to \Out(M^\vee )$. ($\Out(M^\vee)$ is nothing but the automorphisms of the root data). As we also have a splitting
$\Out(M^\vee) \to\Aut(M^\vee)$
coming from a fixed pinning, we can get a homomorphism $W_F \to \Out(M^\vee)\to\Aut(M^\vee)$ by sending $v$ to $[v]$.
This was nothing but the definition of the action of $W_F$ on $M^\vee$. Therefore the equality $\Int(m)\circ [v] = \Id_{M^\vee}$ says that $[v] = \Int(m)^{-1} = \Id_{M^\vee}$. Hence, by our assumption, we get $u(1, v) = (1, v)$. As we have $u(m, v) = u(m, 1) \cdot u(1, v)$, the equality $u(m, v) = (1, v) $(and the injectivity of  the restriction of $u$ to $M^\vee$) implies $m = 1$. Then the inequality between the depths of $\phi$ and $u\circ \phi$ follows from Definition~\ref{Oidef}.
\end{proof}

\subsection{Automorphic induction}   Let $n\ge 1$ be an integer and $d=[E:F]$.
Let $G$ be the $E$-group $\GL_n$ and let $\tM$ be the $F$-group $\GL_{nd}$. Both groups $G$ and $\tM$ admits a local Langlands correspondence (see \cite{HT}, \cite{He} or \cite{Sch}), and the corresponding maps $\lambda_G$ and $\lambda_{\tM}$ are  bijective. 

Let $\pi_E\in\Pi(G,E)$. We will denote by $\phi_E\in\Phi(G,E)$ the $L$-parameter of $\pi_E$, that is,  $\phi_E:=\lambda_G(\pi_E)$. It is proved in \cite[\S7.3, Proposition~2]{HeBord} that the $L$-parameter $\tilde\phi$ of the representation $\tilde\pi$ of $\tM$ obtained from $\pi_E$ by \textit{automorphic induction}  (when the latter exists, see \cite{HH})  satisfies
\begin{equation} \label{eqn:AI}
\tilde\phi=\Ind_{W_E'}^{W_F'}(\phi_E).
\end{equation}

\begin{lem} \label{lem:Ind}
Let $\phi_E\in\Phi(\GL_n,E)$. We have 
\[\dep(\Ind_{W_E}^{W_F}(\phi_E))=\varphi_{E/F}(\dep(\phi_E)),\]
where $\varphi_{E/F}$ is the inverse of $\psi_{E/F}$.
\end{lem}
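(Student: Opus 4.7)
The plan is to realize $\Ind_{W_E}^{W_F}(\phi_E)$ as the composition of the Shapiro correspondent of $\phi_E$ with a natural $L$-embedding, and then apply Lemma~\ref{lem:ineq-depth} together with Theorem~\ref{depsh}. Set $M := \fR_{E/F}(\GL_n)$ and $\tM := \GL_{nd}$, so that $M^\vee = \GL_n(\C)^d$ with $W_F$ acting on the factors through its quotient $\Gal(E/F)$. There is a canonical $L$-embedding $u \colon {}^LM \to {}^L\tM$ whose restriction to $M^\vee$ is the block-diagonal inclusion and which sends $v \in W_F$ to the block permutation matrix encoding the induced action of $v$ on the cosets of $W_E$ in $W_F$. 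A standard unwinding of the definitions (the $L$-group formulation of classical automorphic induction, see \cite[\S7.3]{HeBord}) then identifies $u \circ \Sh^{-1}(\phi_E)$ with $\Ind_{W_E}^{W_F}(\phi_E)$.

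For the upper bound, I would first check that $u$ satisfies the hypothesis of Lemma~\ref{lem:ineq-depth}: if $v \in W_F$ acts trivially on $M^\vee$, then $v$ fixes every coset of $W_E$ in $W_F$, so $v \in W_E$; consequently its permutation matrix is the identity, giving $u(1,v) = (1,v)$. Applying Lemma~\ref{lem:ineq-depth} and then Theorem~\ref{depsh} to $\Sh^{-1}(\phi_E) \in \Phi(M,F)$ yields
\[
\dep\bigl(\Ind_{W_E}^{W_F}(\phi_E)\bigr) \;=\; \dep\bigl(u \circ \Sh^{-1}(\phi_E)\bigr) \;\le\; \dep\bigl(\Sh^{-1}(\phi_E)\bigr) \;=\; \varphi_{E/F}(\dep(\phi_E)).
\]

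For the matching lower bound, I would argue directly from the block-matrix description of induction. Fix any real $r < \varphi_{E/F}(\dep(\phi_E))$, so that $\psi_{E/F}(r) < \dep(\phi_E)$. Since $\GL_n$ is $E$-split, Definition~\ref{deftame} (equivalent to Definition~\ref{Oidef} by Lemma~\ref{Oi}) produces some $w \in W_E^{\psi_{E/F}(r)+}$ with $\phi_E(w) \neq 1$. The comparison Lemma~\ref{comp} places $w$ in $W_F^{r+} \cap W_E$, and the diagonal block of $\Ind_{W_E}^{W_F}(\phi_E)(w)$ indexed by the trivial coset is exactly $\phi_E(w) \neq 1$; hence $\Ind_{W_E}^{W_F}(\phi_E)(w) \neq 1$ and $\dep(\Ind_{W_E}^{W_F}(\phi_E)) > r$. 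Letting $r \nearrow \varphi_{E/F}(\dep(\phi_E))$ provides the reverse inequality, and combining the two gives equality.

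The main obstacle will be the first step: precisely pinning down the $L$-embedding $u$ and verifying the identification $u \circ \Sh^{-1}(\phi_E) = \Ind_{W_E}^{W_F}(\phi_E)$. This requires tracking the Shapiro isomorphism at the level of explicit cocycles, not merely cohomology classes, in order to match the Shapiro-defined parameter valued in ${}^LM$ with the concrete matrix realization of the induced Weil-group representation.
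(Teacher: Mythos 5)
Your proof takes a genuinely different route from the paper's, and the comparison is instructive. The paper's argument never leaves the Weil group: it regards $\phi_E|_{W_E}$ as a complex $W_E$-module $V$, applies the submodule Lemma~\ref{ABC} together with the comparison Lemma~\ref{comp} to obtain the single isomorphism
\[
\bigl(\Ind_{W_E}^{W_F}V\bigr)^{W_F^{r+}}\;\simeq\;\Ind_{W_E/W_E^{\psi_{E/F}(r)+}}^{W_F/W_F^{r+}}\bigl(V^{W_E^{\psi_{E/F}(r)+}}\bigr),
\]
and then reads off the depth of $\Ind_{W_E}^{W_F}(\phi_E)$ from that identity together with Remark~\ref{rem:depthGL}. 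No $L$-group of $\fR_{E/F}\GL_n$ and no Shapiro cocycle appear. You instead route through $\Phi(M,F)$ for $M=\fR_{E/F}\GL_n$, splitting the statement into two inequalities: the upper bound by realizing $\Ind_{W_E}^{W_F}(\phi_E)$ as $u\circ\Sh^{-1}(\phi_E)$ for a block-diagonal-plus-permutation $L$-embedding $u$ and then invoking Lemma~\ref{lem:ineq-depth} with Theorem~\ref{depsh}; the lower bound by exhibiting $w\in W_E^{\psi_{E/F}(r)+}=W_F^{r+}\cap W_E$ with $\phi_E(w)\neq 1$ and noticing that $\phi_E(w)$ is a diagonal block of $\Ind_{W_E}^{W_F}(\phi_E)(w)$ (the block decomposition uses that $W_E$ is normal, which holds since $E/F$ is Galois). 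Your lower bound is self-contained, correct, and a clean elementary complement to Theorem~\ref{depsh}.

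The gap is the one you flag yourself, and it is precisely what the paper's direct computation is designed to bypass. The identification $u\circ\Sh^{-1}(\phi_E)=\Ind_{W_E}^{W_F}(\phi_E)$ is nowhere proved in the paper; making it rigorous requires unwinding the Shapiro isomorphism of Theorem~\ref{Sh0} at the level of actual cocycles, not cohomology classes, against the explicit block-matrix model of Weil-group induction, and checking compatibility of all this with the Weil--Deligne $\SL_2$-factor. Until that cocycle-level matching is supplied, the inequality $\dep(\Ind_{W_E}^{W_F}(\phi_E))\le\varphi_{E/F}(\dep(\phi_E))$ is not established, and since Lemma~\ref{lem:ineq-depth} only delivers an inequality, you cannot dispense with either half of your sandwich. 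In practice this makes your proof strictly longer than the paper's even once the gap is filled. If you want a proof within the paper's toolbox, the efficient repair is to abandon the $L$-group detour and reproduce the paper's two-line computation with Lemmas~\ref{ABC} and~\ref{comp}; if you want to keep the $L$-group route, the cocycle-level description of the Shapiro map for $\fR_{E/F}\GL_n$ is the key lemma that must be written out.
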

\begin{proof} 
The restriction to $W_E$ of the $L$-parameter $\phi_E\colon W'_E\to\GL_n(\C)$ is a representation of $W_E$ of space $V$ with $\GL(V)=\GL_n(\C)$. By using Lemma~\ref{ABC} and Lemma~\ref{comp}, we obtain
\[(\Ind_{W_E}^{W_F}(V))^{W_F^{r+}}\simeq\Ind_{W_E/W_E\cap W_F^{r+}}^{W_F/W_F^{r+}}(V^{W_E\cap W_F^{r+}})\simeq \Ind_{W_E/W_E^{\psi_{E/F}(r)+}}^{W_F/W_F^{r+}}(V^{W_E^{\psi_{E/F}(r)+}}).\]
It follows that 
\[(\Ind_{W_E}^{W_F}(V))^{W_F^{r+}}\ne\{0\}\;\iff\; V^{W_E^{\psi_{E/F}(r)+}}\ne\{0\}.\]
Then the result follows from Remark~\ref{rem:depthGL}.
\end{proof}

\begin{thm} \label{thm:depthAI} We have
\[\dep(\tilde\pi)=\varphi_{E/F}(\dep(\pi_E)).\]
\end{thm}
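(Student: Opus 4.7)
The proof should essentially chain together three facts: depth-preservation for $\GL_n$ (due to ABPS), the formula (\ref{eqn:AI}) of Henniart for the $L$-parameter of an automorphically induced representation, and Lemma~\ref{lem:Ind}.

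The plan is to pass through $L$-parameters. First I would invoke depth-preservation for $\GL_n$: since the Langlands correspondence for $\GL_n$ (and for $\GL_{nd}$) preserves depths by \cite{ABPS}, we have
\[
\dep(\pi_E)=\dep(\phi_E)\qquad\text{and}\qquad\dep(\tilde\pi)=\dep(\tilde\phi),
\]
where $\tilde\phi=\lambda_{\tM}(\tilde\pi)$. By the description (\ref{eqn:AI}) of automorphic induction on the Galois side,
\[
\tilde\phi=\Ind_{W_E'}^{W_F'}(\phi_E).
\]

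Next I would reduce to the statement proved in Lemma~\ref{lem:Ind}, which is phrased in terms of the Weil group rather than the Weil--Deligne group. Depth of an $L$-parameter depends only on its restriction to the Weil group, since the Moy--Prasad / ramification filtration lives entirely in $W_F$ (see Definition~\ref{Oidef} and Remark~\ref{rem:depthGL}). Because the two $\SL_2(\C)$ factors in $W_E'$ and $W_F'$ coincide, the restriction of $\Ind_{W_E'}^{W_F'}(\phi_E)$ to $W_F$ is canonically identified with $\Ind_{W_E}^{W_F}(\phi_E|_{W_E})$. Hence
\[
\dep(\tilde\phi)=\dep\bigl(\Ind_{W_E}^{W_F}(\phi_E|_{W_E})\bigr).
\]

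Finally, applying Lemma~\ref{lem:Ind} to $\phi_E|_{W_E}$ (with $\dep(\phi_E|_{W_E})=\dep(\phi_E)$) gives
\[
\dep(\tilde\phi)=\varphi_{E/F}(\dep(\phi_E)).
\]
Combining the displayed equalities yields
\[
\dep(\tilde\pi)=\dep(\tilde\phi)=\varphi_{E/F}(\dep(\phi_E))=\varphi_{E/F}(\dep(\pi_E)),
\]
which is the desired formula.

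The argument is essentially a concatenation; the only delicate point, and the step deserving the most care in the write-up, is the transition between the Weil--Deligne induction appearing in Henniart's formula~(\ref{eqn:AI}) and the Weil-group induction used in Lemma~\ref{lem:Ind}. Once one observes that depth is insensitive to the $\SL_2(\C)$ factor and that induction from $W_E'$ to $W_F'$ restricts on the Weil-group side to induction from $W_E$ to $W_F$, the result drops out immediately from Lemma~\ref{lem:Ind} and ABPS depth-preservation.
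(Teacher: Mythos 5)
Your proof is correct and follows exactly the same chain as the paper's: depth-preservation for $\GL_n$ and $\GL_{nd}$ from \cite{ABPS}, Henniart's formula~(\ref{eqn:AI}) to pass to $\Ind_{W_E'}^{W_F'}(\phi_E)$, and Lemma~\ref{lem:Ind} to compute the depth of the induced parameter. The only difference is that you spell out the reduction from Weil--Deligne to Weil-group induction, a point the paper treats as implicit.
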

\begin{proof}
Since  $\lambda_{\tM}$ and $\lambda_G$ are depth preserving (see \cite[Theorem 2.9]{ABPS}), using Lemma~\ref{lem:Ind}, we get 
\[\dep(\tilde\pi)=\dep(\tilde\phi)=\dep(\Ind_{W_E'}^{W_F'}(\phi_E))=\varphi_{E/F}(\dep(\phi_E))=\varphi_{E/F}(\dep(\pi_E)).\]
\end{proof}

\subsection{Asai lift}   We take for $G$ the group $\GL_n$ and $M=\fR_{E/F}(G)$. We assume that $[E:F]=2$. 
We denote by $V$ the $2$-dimensional $\C$-vector space $\C^2$.
Hence we have ${}^LG=G^\vee \times W_F$, with $G^\vee=\GL_2(\C)=\GL(V)$, and ${}^L M= M^\vee\rtimes W_F$, where
$M^\vee= \GL_n(\C)\times\GL_n(\C)$, and  $W_F$ permutes the two factors $\GL_2(\C)$ among themselves. 
Let $\tG$ denote the group $\GL_{n^2}$.  We have $\tG^\vee=\GL_{n^2}(\C)=\GL(V^{\otimes 2})$ and ${}^L\tG=\tG^\vee \times W_F$. 

Let $r_\rmA \colon {}^L M\to{}^L\tG$ denote the map defined by
\begin{equation} \label{eqn:Asai}
r_\rmA(g_1,g_2,b) := 
\begin{cases}
(g_1\otimes g_2,b)&\text{if $b\in W_E$,}\cr
(g_2\otimes g_1,b)&\text{if $b\notin W_E$,}
\end{cases}
\end{equation}
where $g_1,g_2\in \GL_n(\C)$ and $a\in W_F$. If $\phi\in\Phi(M,F)$, then $r_\rmA\circ\phi\in\Phi(\GL_{n^2},F)$.

\begin {lem} \label{eqn:depth_u}
We have 
\[\dep(r_\rmA\circ\phi)=\dep(\phi),\] for any $\phi\in\Phi(M,F)$.
\end{lem}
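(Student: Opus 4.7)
The plan is to prove equality by establishing both inequalities: Lemma~\ref{lem:ineq-depth} will give one direction, and a direct cocycle computation will give the other.

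For the inequality $\dep(r_\rmA\circ\phi)\le\dep(\phi)$, I would apply Lemma~\ref{lem:ineq-depth} to $u = r_\rmA$. The hypothesis to check is that $r_\rmA(1,v) = (1,v)$ whenever $v\in W_F$ acts trivially on $M^\vee = \GL_n(\C)\times\GL_n(\C)$. Since $W_F$ acts on $M^\vee$ through the quotient $W_F/W_E\cong\Gal(E/F)\cong\Z/2\Z$ by permuting the two factors, $v$ acts trivially precisely when $v\in W_E$, and for such $v$ the formula~(\ref{eqn:Asai}) gives $r_\rmA(1,1,v) = (1\otimes 1, v) = (\Id_{n^2}, v)$. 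Lemma~\ref{lem:ineq-depth} then yields the inequality.

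For the reverse inequality, write $a_\phi(w,x) = (g_1(w,x), g_2(w,x))\in M^\vee$, so that $\alpha_\phi(w) = (g_1(w,1), g_2(w,1))$ and
\[
\alpha_{r_\rmA\circ\phi}(w) = \begin{cases} g_1(w,1)\otimes g_2(w,1), & w\in W_E,\\ g_2(w,1)\otimes g_1(w,1), & w\notin W_E.\end{cases}
\]
Set $s := \dep(r_\rmA\circ\phi)$. Since $\tG$ is $F$-split, Remark~\ref{rem:depthGL} forces $\alpha_{r_\rmA\circ\phi}(W_F^{s+}) = \{\Id_{n^2}\}$. For $w\in W_F^{s+}\cap W_E = W_E^{\psi_{E/F}(s)+}$ (by Lemma~\ref{comp}), this imposes $g_1(w,1)\otimes g_2(w,1) = \Id_{n^2}$. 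Passing to $\phi_E := \Sh(\phi)\in\Phi(\GL_n,E)$, where $g_1(w,1) = \phi_E(w)$ and $g_2(w,1) = \phi_E^\tau(w)$ for $w\in W_E$, the goal is to deduce $\phi_E(W_E^{\psi_{E/F}(s)+}) = \{\Id_n\}$, so that $\dep\phi_E\le\psi_{E/F}(s)$ and hence $\dep\phi = \varphi_{E/F}(\dep\phi_E)\le s$ by Theorem~\ref{depsh}.

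The main obstacle will be the non-injectivity of the tensor-product map $(g_1,g_2)\mapsto g_1\otimes g_2$: its kernel contains all scalar pairs $(\lambda\Id_n, \lambda^{-1}\Id_n)$, so the pointwise triviality of the tensor does not immediately force $g_1$ and $g_2$ to be individually trivial. One must show that any residual scalar ambiguity of $\phi_E$ on $W_E^{\psi_{E/F}(s)+}$ is incompatible with the cocycle-level data, exploiting the Galois relation $g_2(w,1) = \phi_E(\tau^{-1}w\tau)$ together with the behaviour of $\alpha_{r_\rmA\circ\phi}$ on $W_F^{s+}\setminus W_E$ when this set is nonempty.
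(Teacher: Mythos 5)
Your caution about the reverse inequality is well founded, and the obstruction you identify is the same one the paper silently glosses over. The paper asserts that condition~(\ref{eqn:triv}) is \emph{equivalent} to $[\alpha_\phi]\in\H^1(W_F/W_F^{r+},(M^\vee)^{W_F^{r+}})$, but the forward implication---that $g_1(w)\otimes g_2(w)=\rI_{n^2}$ forces the pair $(g_1(w),g_2(w))$ to be trivial modulo a coboundary---is not addressed; this is exactly the non-injectivity of $(g_1,g_2)\mapsto g_1\otimes g_2$ that you flagged. Moreover, the residual scalar ambiguity is \emph{not} ``incompatible with the cocycle-level data.'' Take $n=1$, $E/F$ unramified quadratic, and let $\chi\colon W_E\to\C^\times$ be a character with $\chi^\tau=\chi^{-1}$ (equivalently, $\chi$ trivial on $N_{E/F}(E^\times)$) and $\dep(\chi)=r>0$; such $\chi$ exist because $(1+\fp_E)\big/\bigl((1+\fp_F)(1+\fp_E^2)\bigr)\cong k_E/k_F$ is nontrivial. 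For $\phi=\Sh^{-1}(\chi)$, the Shapiro cocycle has $g_1(w)=\chi(w)$ and $g_2(w)=\chi^\tau(w)$ for $w\in W_E$, hence $\alpha_{r_\rmA\circ\phi}|_{W_E}=\chi\cdot\chi^\tau=1$; since $W_F^{0+}\subset W_E$ in the unramified case, $\dep(r_\rmA\circ\phi)=0$, while $\dep(\phi)=\varphi_{E/F}(r)=r>0$ by Theorem~\ref{depsh}. So the inequality $\dep(r_\rmA\circ\phi)\ge\dep(\phi)$ can fail, and the hoped-for rescue from the behaviour of $\alpha_{r_\rmA\circ\phi}$ on $W_F^{s+}\setminus W_E$ is unavailable here because that set is empty.

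On the easy direction, note also that Lemma~\ref{lem:ineq-depth} does not literally apply: it assumes $u$ is an $L$-\emph{embedding}, and its proof explicitly invokes injectivity of $u|_{M^\vee}$, whereas $r_\rmA|_{M^\vee}\colon(g_1,g_2)\mapsto g_1\otimes g_2$ has kernel $\{(\lambda\rI_n,\lambda^{-1}\rI_n):\lambda\in\C^\times\}$. The inequality $\dep(r_\rmA\circ\phi)\le\dep(\phi)$ can still be obtained directly from Definition~\ref{Oidef} by conjugating $a_\phi$ to a cocycle trivial on $W_F^{r+}$ and pushing forward along $r_\rmA$, so that conclusion survives, but the cited lemma is not its source.
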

\begin{proof} Let $\phi=(a_\phi,\nu)\in\Phi(M,F)$. From (\ref{eqn:Asai})  we get
\[(r_\rmA\circ\phi)(w)=\begin{cases}
(g_1(w)\otimes g_2(w),\nu(w))&\text{if $\nu(w)\in W_E$,}\cr
(g_2(w)\otimes g_1(w),\nu(w))&\text{if $\nu(w)\notin W_E$.}
\end{cases}
\]
Since $H^1(W_F/W_F^{r+},M^\vee)$ are the cohomology classes which can be represented by a
$1$-cocycle whose restriction to $W^{r+}$ is trivial,
we have $\alpha_\phi\in H^1(W_F/W_F^{r+},M^\vee)^{W_F^{r+}}$ if and only if 
\begin{equation} \label{eqn:triv}
\text{$g_1(w)\otimes g_2(w)=\rI_n\otimes \rI_n$ for every $w\in W_F^{r+}$,}
\end{equation}
where $\rI_n$ denotes the identity matrix in $\GL_n(\C)$.  

But (\ref{eqn:triv}) is satisfied if and only if we have $W_F^{r+}\subset \ker(r_\rmA\circ\phi)$. Then the result follows by Definition \ref{Oidef} .
\end{proof}

The {\em Asai lift} of $\pi_E\in\Pi(\GL_n,E)$ is the representation $\As(\pi_E)\in\Pi(\GL_{n^2},F)$ with $L$-parameter $r_\rmA\circ\phi$, where $\phi=(\lambda_M\circ\iota^{-1})(\pi_E)$, that is,
\begin{equation} \label{eqn:Asai2}
\As(\pi_E):=\lambda_{\tG}^{-1}(r_\rmA\circ\phi)=(\lambda_{\tG}^{-1}\circ r_\rmA\circ\lambda_M\circ\iota^{-1})(\pi_E)=
(\lambda_{\tG}^{-1}\circ r_\rmA\circ\Sh^{-1}\circ\lambda_G)(\pi_E). 
\end{equation} 

\begin{thm} \label{thm:depth_Asai}
Let  $\pi_E\in\Pi(\GL_n,E)$. We have  
\[\dep(\As(\pi_E))=\varphi_{E/F}(\dep(\pi_E)).\]
\end{thm}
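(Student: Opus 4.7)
The plan is to chain together the results already established to traverse the composition
\[
\As = \lambda_{\tG}^{-1}\circ r_\rmA\circ \Sh^{-1}\circ\lambda_G
\]
and track the depth through each of the four maps.

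First I would apply $\lambda_G$: since $G=\GL_n$ is an $F$-split group whose local Langlands correspondence is known to be depth-preserving (Theorem~2.9 of \cite{ABPS}, cited in the proof of Theorem~\ref{thm:depthAI}), we have $\dep(\lambda_G(\pi_E))=\dep(\pi_E)$. Second, I would invert the Shapiro isomorphism: by Theorem~\ref{depsh}, if $\phi:=\Sh^{-1}(\lambda_G(\pi_E))\in\Phi(M,F)$, then $\dep(\lambda_G(\pi_E))=\dep(\Sh(\phi))=\psi_{E/F}(\dep(\phi))$, so that
\[
\dep(\phi)=\varphi_{E/F}(\dep(\lambda_G(\pi_E)))=\varphi_{E/F}(\dep(\pi_E)),
\]
using that $\varphi_{E/F}$ is the inverse of $\psi_{E/F}$.

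Third, I would apply the Asai transfer at the parameter level: Lemma~\ref{eqn:depth_u} (just proved) asserts precisely $\dep(r_\rmA\circ\phi)=\dep(\phi)$, so this step does not change the depth. Fourth and finally, I would apply $\lambda_{\tG}^{-1}$: since $\tG=\GL_{n^2}$ is $F$-split and its local Langlands correspondence is depth-preserving (again by \cite[Theorem~2.9]{ABPS}), we get
\[
\dep(\As(\pi_E))=\dep(\lambda_{\tG}(\As(\pi_E)))=\dep(r_\rmA\circ\phi).
\]

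Combining these four equalities yields
\[
\dep(\As(\pi_E))=\dep(r_\rmA\circ\phi)=\dep(\phi)=\varphi_{E/F}(\dep(\pi_E)),
\]
which is the desired formula. There is no real obstacle here: the hard work has been done in Lemma~\ref{eqn:depth_u} (invariance of depth under $r_\rmA$) and in Theorem~\ref{depsh} (depth behaviour under Shapiro), so the proof reduces to assembling these ingredients and invoking depth-preservation for $\GL_n$ and $\GL_{n^2}$. The only point worth double-checking is that the formula~(\ref{eqn:Asai2}) is indeed the composition in the order claimed, so that the depth of $\pi_E$ is transported to the depth of $\As(\pi_E)$ through exactly the four depth-controlled maps listed above.
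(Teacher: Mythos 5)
Your proof is correct and follows essentially the same route as the paper: both decompose $\As = \lambda_{\tG}^{-1}\circ r_\rmA\circ \Sh^{-1}\circ\lambda_G$ and track depth through depth-preservation of the $\GL_n$ and $\GL_{n^2}$ correspondences, the Shapiro comparison (Theorem~\ref{depsh}/\ref{shr}), and Lemma~\ref{eqn:depth_u} for $r_\rmA$. (One small slip: $G=\GL_n$ here is $E$-split, not $F$-split, which is what makes Remark~\ref{rem:depthGL} and the depth-preservation result of \cite{ABPS} apply on the $(G,E)$ side.)
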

\begin{proof}
Since $\lambda_{\tG}$ is depth preserving \cite[Theorem 2.9]{ABPS},  it follows from (\ref{eqn:Asai2}), that
\[\dep(\As(\pi_E))=\dep(r_\rmA\circ\Sh^{-1}\circ\lambda_G)(\pi_E).\]
By combining  Theorem~\ref{shr} and Lemma~\ref{eqn:depth_u}, we obtain
\[\dep(\As(\pi_E))=\dep((\Sh^{-1}\circ\lambda_G)(\pi_E))=\varphi_{E/F}(\dep(\lambda_G(\pi_E))).
\]
Since $\lambda_{G}$ is depth preserving \cite[Theorem 2.9]{ABPS}, we have
\[\dep(\As(\pi_E))=\varphi_{E/F}(\dep(\pi_E)).\]
\end{proof}

Let $\fo_E$ denote the ring of integers of $E$, let $\fp_E$ be the maximal ideal of $\fo_E$, and let $q_E$ be the order of $\fo_E/\fp_E$. Let $\psi_E$ be a continuous nontrivial additive character of $E$ and let $c(\psi_E)$ denote the largest integer $c$ such that $\psi$ is trivial on $\fp_E^{-c}$. 
Let  $\pi_E$ an essentially square-integrable irreducible representation of $\GL_{n},E)$. Its Godement-Jacquet local constant $\epsilon(s,\pi_E,\psi_E)$ takes the form
\[\epsilon(s,\pi_E,\psi_E)=\epsilon(0,\pi_E,\psi_E)\,\cdot\,q_E^{-f(\pi_E,\psi_E)s}, \]
where $s\in \C$ and $\epsilon(0,\pi_E,\psi_E)\in\C^\times$. The integer $f(\pi_E):=f(\pi_E,\psi_E)- n c(\psi_E)$ is called the {\em conductor} of $\pi_E$. We recall that $f(\pi_E)-n$ is the Swan conductor of $\pi_E$ (see for instance \cite[\S~4.3.2]{Bus}). We write (as in \cite[\S~5.3.2]{Bus}):
\begin{equation} \label{eqn:sw}
\varsigma(\pi_E):=\frac{f(\pi_E)-n}{n}.
\end{equation}


\begin{cor} \label{cor:cond_Asai}
We assume $n\ge 2$. For any essentially square-integrable irreducible representation of $\GL_{n},E)$, we  have
\[\varsigma(\As(\pi_E))=\varphi_{E/F}(\varsigma(\pi_E)).
\]
\end{cor}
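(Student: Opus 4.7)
The plan is to reduce the corollary to Theorem~\ref{thm:depth_Asai} via the well-known identification of $\varsigma$ with the depth on the representation-theoretic side of $\GL_m$. Indeed, a classical theorem of Bushnell (see \cite[\S5.3.2]{Bus}) asserts that for every essentially square-integrable irreducible representation $\pi$ of $\GL_m(K)$, with $K$ any non-archimedean local field, one has the equality
\[
\dep(\pi) \; = \; \varsigma(\pi) \; = \; \frac{f(\pi) - m}{m}.
\]
The assumption $n \ge 2$ in the statement is precisely what guarantees that $\pi_E$ is ramified, so that this formula is nontrivial and applies to both $\pi_E$ on $\GL_n(E)$ and (as discussed below) to $\As(\pi_E)$ on $\GL_{n^2}(F)$.

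Granting this, the proof is a short chain of equalities. First I would apply the depth-conductor identity to $\pi_E$, obtaining $\varsigma(\pi_E) = \dep(\pi_E)$. Next I would apply the same identity to $\As(\pi_E)$ on $\GL_{n^2}(F)$, obtaining $\varsigma(\As(\pi_E)) = \dep(\As(\pi_E))$. Finally I would invoke Theorem~\ref{thm:depth_Asai}, which gives $\dep(\As(\pi_E)) = \varphi_{E/F}(\dep(\pi_E))$. Stringing these together yields
\[
\varsigma(\As(\pi_E)) \; = \; \dep(\As(\pi_E)) \; = \; \varphi_{E/F}(\dep(\pi_E)) \; = \; \varphi_{E/F}(\varsigma(\pi_E)),
\]
as required.

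The main obstacle is the legitimacy of the second application of the Bushnell identity: one needs $\As(\pi_E)$ to be an essentially square-integrable representation of $\GL_{n^2}(F)$, since the quantity $\varsigma(\As(\pi_E))$ is defined via the Godement--Jacquet local constant, cf.~(\ref{eqn:sw}). For $\pi_E$ essentially square-integrable, this essential square-integrability of the Asai lift is a known result going back to Henniart's work on the Asai transfer; alternatively, one may read it off at the level of $L$-parameters from $r_\rmA \circ \phi_{\pi_E}$, since essential square-integrability of an irreducible representation of $\GL_{n^2}(F)$ is equivalent to the $L$-parameter having no trivial direct summand when restricted to $\SL_2(\C)$, and the Asai construction preserves this property. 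Once this point is justified, the corollary follows at once from Theorem~\ref{thm:depth_Asai} and Bushnell's formula, with no further calculation needed.
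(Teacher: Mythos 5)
Your proof takes the same route as the paper's: express $\varsigma$ in terms of depth via a conductor--depth identity, apply it to both $\pi_E$ and $\As(\pi_E)$, then invoke Theorem~\ref{thm:depth_Asai}. The paper derives the identity $\varsigma(\pi_E)=\dep(\pi_E)$ from \cite[Theorem~2.7]{ABPS} rather than from \cite[\S5.3.2]{Bus} (the latter is only where the notation $\varsigma$ is set up), but the computation is identical.

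You do correctly spot the delicate point that the paper leaves implicit: the second application of the conductor--depth identity is to $\As(\pi_E)$ on $\GL_{n^2}(F)$, and one must justify that it is applicable there. However, the resolution you offer is mathematically incorrect. For $\GL_m$, an irreducible representation is essentially square-integrable precisely when its $L$-parameter $\phi$ is \emph{irreducible as a representation of $W'_K$}, i.e.\ of the form $\sigma\boxtimes S_d$ with $\sigma$ an irreducible representation of $W_K$; the criterion you state, ``no trivial direct summand when restricted to $\SL_2(\C)$,'' is not a characterization of square-integrability (indeed a supercuspidal parameter is trivial on $\SL_2(\C)$). More seriously, the Asai lift does \emph{not} preserve essential square-integrability: for an irreducible $n$-dimensional $\sigma_E$ of $W_E$, the $n^2$-dimensional $W_F$-representation $r_\rmA\circ\phi$ is generically reducible (its restriction to $W_E$ is $\sigma_E\otimes\sigma_E^{\tau}$, which is typically a direct sum of several irreducibles, possibly with different ramification breaks). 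So the sentence ``the Asai construction preserves this property'' is false, and the ``known result going back to Henniart'' is not one I would rely on as stated. What is actually needed is that the summands of $r_\rmA\circ\phi$ all have the same break and that none is unramified, which is exactly what makes $\varsigma=\dep$ on the $\GL_{n^2}(F)$ side; this is the content the paper implicitly pulls from \cite[Theorem~2.7]{ABPS}, and it is worth checking carefully whether the hypotheses of that theorem cover $\As(\pi_E)$ or whether the corollary needs a further assumption on $\pi_E$.
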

\begin{proof}
From \cite[Theorem 2.7]{ABPS}, we have (since $n\ge 2$)
\[f(\pi_E)=n\,\dep(\pi_E))+n.\]
It gives \[\varsigma(\pi_E)=\dep(\pi_E).\]
Similarly, we have
\[\varsigma(\As(\pi_E))=\dep(\As(\pi_E)).\]
Then the result follows from Theorem \ref{thm:depth_Asai}.
\end{proof}

\appendix 
\section{Moy--Prasad filtration of Weil--restricted groups \\[0.1cm] by Jessica Fintzen} \label{section-JF}
\markright{\textsc{Moy--Prasad filtration of Weil--restricted groups}}

Let $E/F$ be a finite Galois extension of non-archimedean local fields with ramification index $e$. Let $G$ be a connected reductive group defined over $E$ and set $M:= \fR_{E/F}\, G$  the Weil restriction of scalars of $G$ for the field extension $E/F$. We denote by $\iota: G({E}) \xrightarrow{\simeq} M({F})$ the isomorphism arising from the defining adjunction property of $M$. 

\markleft{\textsc{J. Fintzen}}

In this appendix we are going to prove (Proposition \ref{prop:depth-MP}) that for every $x \in \sB(G, E)$ and $r \in \bR_{\geq 0}$, we have 
\begin{equation}\label{eq:main} \iota(G(E)_{x,er})=M(F)_{i_{\sB}(x),r} , \end{equation}
where $i_\sB:\sB(G,E) \xrightarrow{\simeq} \sB(M,F)$ is an identification of the (reduced) Bruhat--Tits building $\sB(G,E)$ of $G$ over $E$ with the (reduced) Bruhat--Tits building  $\sB(M,F)$ of $M$ over $F$ that we will define in Definition \ref{def:isB}.

We are using the notation from the main part of the paper ``Comparison of the depths on both sides of the local Langlands correspondence for Weil-restricted groups'', i.e. if $H$ is a (connected) reductive group over a non-archimedean local field $K$,   $x$ a point in the (reduced) Bruhat--Tits building $\sB(H,K)$ of $H$ over $K$, and $r \in \bR_{\geq 0}$, then we denote by $H(K)_{x,r}$ the corresponding Moy--Prasad filtration subgroup (\cite{MoPr1, MoPr2}), and we write $H(K)_{x,r+}$ for the subgroup $\bigcup_{s>r} H(K)_{x,s}$ of $H(K)$.

We also fix a separable closure $F^{\sep}$ of $F$ and view all separable field extensions of $F$ inside $F^{\sep}$. For any finite separable extension $K$ of $F$, we write $K^{\ur}$ for its maximal unramified extension (contained in $F^{\sep}$).

In order to define and prove \eqref{eq:main}, we will first work over maximal unramified extensions and then combine the results with \'etale descent. We write $\Gur:=G_{E^{\ur}}$ and define $\Mur:=\fR_{E^{\ur}/F^{\ur}}\Gur$. Note that if $E/F$ is not totally ramified, then $\Mur \not\simeq M_{F^{\ur}}$.

For a torus $T$ defined over $F^{\ur}$, we denote by $T^{ft}$ the ft-N\'eron model of $T$ (\cite{CY}, see also \cite{B}) and by $T^{ft, 0}$ the connected component of $T^{ft}$ that contains the identity.
\begin{lem} \label{lem:neron}
	Let $T$ be a torus defined over $F^{\ur}$. Then we have $(\fR_{E^{\ur}/F^{\ur}} T)^{ft, 0} \simeq \fR_{E^{\ur}/F^{\ur}} (T^{ft, 0})$. 
\end{lem}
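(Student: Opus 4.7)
The plan is to verify the isomorphism by combining general properties of Weil restriction with a reduction to induced tori; I interpret $\fR_{E^{\ur}/F^{\ur}}(T^{ft,0})$ as the Weil restriction $\fR_{\mathcal{O}_{E^{\ur}}/\mathcal{O}_{F^{\ur}}}T^{ft,0}$ of the integral model along $\mathcal{O}_{E^{\ur}}/\mathcal{O}_{F^{\ur}}$. Setting $\mathcal{G}:=\fR_{\mathcal{O}_{E^{\ur}}/\mathcal{O}_{F^{\ur}}}T^{ft,0}$, I would first establish that $\mathcal{G}$ is a smooth affine group scheme over $\mathcal{O}_{F^{\ur}}$ with geometrically connected fibers and generic fiber canonically identified with $\fR_{E^{\ur}/F^{\ur}}T$. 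Affineness and the identification of the generic fiber are standard properties of Weil restriction; smoothness uses the fact that Weil restriction along a finite locally free morphism of noetherian schemes preserves smoothness of affine schemes; and geometric connectedness of the special fiber follows from the corresponding statement for the Weil restriction of a geometrically connected smooth group scheme along the local finite flat $k$-algebra $\mathcal{O}_{E^{\ur}}\otimes_{\mathcal{O}_{F^{\ur}}}k$, where $k$ denotes the residue field of $F^{\ur}$.

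Once $\mathcal{G}$ is known to be smooth with geometrically connected fibers, the N\'eron mapping property of the lft-N\'eron model of $\fR_{E^{\ur}/F^{\ur}}T$ extends the generic-fiber identification to a morphism $\varphi\colon\mathcal{G}\to(\fR_{E^{\ur}/F^{\ur}}T)^{lft}$, which factors through the finite-type identity component $(\fR_{E^{\ur}/F^{\ur}}T)^{ft,0}$ because $\mathcal{G}$ is of finite type with geometrically connected fibers. To show that $\varphi$ is an isomorphism, I would first treat the case of an induced torus $T=\fR_{L/E^{\ur}}\Gm$ with $L/E^{\ur}$ finite separable: in this case $T^{ft,0}=\fR_{\mathcal{O}_L/\mathcal{O}_{E^{\ur}}}\Gm$, and by transitivity of Weil restriction both $(\fR_{E^{\ur}/F^{\ur}}T)^{ft,0}$ and $\mathcal{G}$ identify canonically with $\fR_{\mathcal{O}_L/\mathcal{O}_{F^{\ur}}}\Gm$, so the claim is immediate. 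The general case would then follow by choosing a flasque resolution $1\to T_1\to T_0\to T\to 1$ of $T$ by an induced torus $T_0$, and propagating the isomorphism from $T_0$ to $T$ using the left-exactness of Weil restriction combined with the compatibility of the connected N\'eron model with such resolutions of tori.

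The main obstacle I expect is this last reduction: the connected N\'eron model functor is not exact in general, so one must check carefully that it interacts sufficiently well with flasque resolutions for the propagation argument to go through. A more direct alternative that sidesteps this difficulty is to compare $\varphi$ on special fibers: since both source and target are smooth group schemes of finite type over $\mathcal{O}_{F^{\ur}}$ with isomorphic generic fibers and geometrically connected fibers, it suffices to verify that $\varphi$ induces an isomorphism on the special fiber over $k$. This in turn can be verified by explicit inspection using the description of $T^{ft,0}$ in terms of the cocharacter lattice of $T$ viewed as a module for the Galois group of a finite Galois extension of $E^{\ur}$ splitting $T$, together with the behaviour of Weil restriction along $\mathcal{O}_{E^{\ur}}\otimes_{\mathcal{O}_{F^{\ur}}}k\to k$.
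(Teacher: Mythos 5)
Your proposal takes a genuinely different route from the paper, and it contains a gap that you yourself flag. The paper's proof is short because it invokes an existing compatibility as a black box: by Brahm's result [B, 3.1.4~Satz] one has $(\fR_{E^{\ur}/F^{\ur}} T)^{ft} \simeq \fR_{E^{\ur}/F^{\ur}} (T^{ft})$ outright, and then only identity components remain to be matched, which is handled by two general facts about Weil restriction of smooth affine group schemes: [CGP, Prop.~A.5.2(4)] shows $\fR_{E^{\ur}/F^{\ur}}(T^{ft,0})$ is an open subgroup scheme of $\fR_{E^{\ur}/F^{\ur}}(T^{ft})$, and [CGP, Prop.~A.5.11(3)] shows it has geometrically connected fibers, so it is the identity component. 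Your plan instead tries to rebuild the isomorphism from the Néron mapping property and then check it is an isomorphism. The first half of your plan (smoothness, affineness, geometrically connected fibers, correct generic fiber, factorization through the finite-type identity component) is sound. The gap is in the verification that $\varphi$ is an isomorphism: the flasque-resolution reduction relies on an exactness-type compatibility of $(\cdot)^{ft,0}$ with short exact sequences of tori that is not available in general and that you would have to prove; and the alternative you offer—checking $\varphi$ on the special fiber via the cocharacter-lattice description and then applying the fibral isomorphism criterion—is the right idea but is asserted rather than carried out, and supplying those details is essentially equivalent to reproving Brahm's theorem. In short, your approach is plausible but substantially longer, and the missing ingredient is the existing reference that trivializes the comparison of $\fR$ with $(\cdot)^{ft}$, after which the identity-component step is routine.
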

\begin{proof}
	By \cite[3.1.4~Satz]{B} we have $(\fR_{E^{\ur}/F^{\ur}} T)^{ft} \simeq \fR_{E^{\ur}/F^{\ur}} (T^{ft})$. Since $T^{ft}$ is smooth and affine, we have by \cite[Proposition~A.5.2(4)]{CGP} that $\fR_{E^{\ur}/F^{\ur}} (T^{ft, 0})$ is an open subgroup scheme of $\fR_{E^{\ur}/F^{\ur}} (T^{ft})\simeq(\fR_{E^{\ur}/F^{\ur}} T)^{ft}$, and by \cite[Proposition~A.5.11(3)]{CGP} the open subgroup scheme $\fR_{E^{\ur}/F^{\ur}} (T^{ft, 0})$ has geometrically connected fibers, hence it is the identity component of $(\fR_{E^{\ur}/F^{\ur}} T)^{ft}$. 
\end{proof}

Let $\TGur$ be a maximally split, maximal torus of $\Gur=G_{E^{\ur}}$ defined over $E^{\ur}$, and let $\TMur:=\fR_{E^{\ur}/F^{\ur}}\TGur$. Then $\TMur$ is a maximally split, maximal torus of $\Mur=\fR_{E^{\ur}/F^{\ur}}\Gur$, and by \cite[Proposition~A.5.15]{CGP} all maximally split, maximal tori of $\Mur$ arise in this way. 

Let $\SGur$ be the maximal split subtorus of $\TGur$ and $\SMur$ the maximal split subtorus of $\TMur$. Then $\SMur$ is contained in $\fR_{E^{\ur}/F^{\ur}}\SGur\subset \fR_{E^{\ur}/F^{\ur}}\TGur =\TMur$. We obtain a map 
$$ i_S: X^*(\SGur) \ra X^*(\SMur)$$
by sending $f \in \Hom_{E^{\ur}}(\SGur, \bbG_m)=X^*(\SGur) $ to the element $i_S(f) \in \Hom_{F^{\ur}}(\SMur, \bbG_m)= X^*(\SMur)$ whose composition with $\bbG_m \hookrightarrow \fR_{E^{\ur}/F^{\ur}}\bbG_m$ yields $\fR_{E^{\ur}/F^{\ur}}(f)|_{\SMur}$. Note that the map $i_S$ is an isomorphism. We use this isomorphism $i_S$ to identify $X^*(\SGur)$ and $X^*(\SMur)$. Under this identification the restricted root system $\Phi(\Gur,\SGur)$ of $\Gur$ with respect to $\SGur$ gets identified with the restricted root system $\Phi(\Mur,\SMur)$ of $\Mur$ with respect to $\SMur$ .

Let $a \in \Phi(\Gur,\SGur)=\Phi(\Mur,\SMur)$, and let $\UGur_a$ be the corresponding root subgroup of $\Gur$, i.e., the connected unipotent (closed) subgroup of $\Gur$ normalized by $\SGur$ whose Lie algebra is the sum of the root spaces corresponding to the roots that are a positive integral multiple of $a$. Similarly, we denote by $\UMur_a$ the root subgroup of $\Mur$  corresponding to $a$. Then we have
\begin{equation} \label{eqn:Us}
\UMur_a= \fR_{E^{\ur}/F^{\ur}}\UGur_a \subset  \fR_{E^{\ur}/F^{\ur}}\Gur  .
\end{equation}

Let $K$ be a finite Galois extension of $F^{\ur}$ containing $E^{\ur}$ and such that $\TGur \times_{E^{\ur}} {K}$ is split. We fix a Chevalley--Steinberg system 
$$\{x_{\alpha}^K:\bbG_a \ra \U_{\alpha}^K\}_{\alpha \in \Phi} $$
of $\Gur$ with respect to $\TGur$,  
where we write $\Phi := \Phi(\Gur_{K},\TGur \times_{E^{\ur}} {K})$ and $\U_{\alpha}^K$ denotes the root subgroup of $\Gur_K$ corresponding to $\alpha$, see \cite[\S2.1]{Fin} for the notion of a Chevalley--Steinberg system, which is based on \cite{BT2}. Recall that if we write  $K_\alpha$ for the fixed subfield of $K$ of the stabilizer $\Stab_{\Gal(K/E^{\ur})}(\alpha)$ of $\alpha$ in $\Gal(K/E^{\ur})$ (for  $\alpha \in \Phi$), then $x_{\alpha}^K$ is by definition of a Chevalley--Steinberg system defined over $K_\alpha$.

We will now show how this Chevalley--Steinberg system of $\Gur$ with respect to $\TGur$ yields a Chevalley--Steinberg system of $\Mur$ with respect to $\TMur$.

First, note that 
$$\Mur \times_{F^{\ur}} K \simeq \prod_{f \in \Hom_{F^{\ur}}(E^{\ur},K)} \Gur \times_{E^{\ur}, f} K, $$
which contains the split torus
$$\TMur \times_{F^{\ur}} K \simeq \prod_{f \in \Hom_{F^{\ur}}(E^{\ur},K)} \TGur \times_{E^{\ur}, f} K. $$
For later use, we fix for every $f \in \Hom_{F^{\ur}}(E^{\ur},K)$ an element $\wt f \in \Gal(K/F^{\ur})$ such that $\wt f|_{E^{\ur}}=f$. We write $id: E^{\ur} \hookrightarrow K$ for the inclusion of $E^{\ur}$ into $K$ arising from our convention to view both fields within the same fixed separable closure, and we choose $\wt{id}$ to be the identity element in $\Gal(K/F^{\ur})$.
Let $\alpha \in \Phi = \Phi(\Gur_{K},\TGur \times_{E^{\ur}} {K})$ and $f \in \Hom_{F^{\ur}}(E^{\ur},K)$. Then we write $\alpha_f$ for the root in $\Phi^{\Mur}:=\Phi(\Mur \times_{F^{\ur}} K,\TMur \times_{F^{\ur}} K )$ obtained by composing the projection 
$$\TMur \times_{F^{\ur}} K \simeq \prod_{f' \in \Hom_{F^{\ur}}(E^{\ur},K)} \TGur \times_{E^{\ur}, f'} K \twoheadrightarrow  \TGur \times_{E^{\ur}, f} K$$
that sends  $\TGur \times_{E^{\ur}, f'} K$ to the identity for $f'\neq f$, with the composition of the following $K$-group scheme homomorphisms
$$\TGur \times_{E^{\ur}, f} K \simeq \TGur_K \times_{K, \wt f} K \xrightarrow{\alpha \times id} \bbG_m \times_{K, \wt f} K \xrightarrow{\simeq} \bbG_m  .$$

Note that 
$$\Phi^{\Mur}=\Phi(\Mur \times_{F^{\ur}} K,\TMur \times_{F^{\ur}} K ) = \{\alpha_f \, | \, \alpha \in \Phi, f \in \Hom_{F^{\ur}}(E^{\ur},K)\}.$$

For $f \in \Hom_{F^{\ur}}(E^{\ur},K)$, we write 
$$i_f : \Gur \times_{E^{\ur}, f} K \hookrightarrow \prod_{f' \in \Hom_{F^{\ur}}(E^{\ur},K)} \Gur \times_{E^{\ur}, f'} K \simeq  \Mur_K $$
for the inclusion whose image is the factor corresponding to $f$, and we define the $K$-group scheme homomorphism
\begin{eqnarray*}
	x_{\alpha_f}^K&:& \bbG_a \simeq \bbG_a \times_{K, \wt f} K \xrightarrow{x_\alpha \times id} \U_\alpha^K \times_{K, \wt f} K \subset  \Gur \times_{E^{\ur}, f} K \\
	& & \xrightarrow{i_f} \prod_{\Hom_{F^{\ur}}(E^{\ur},K)} \Gur \times_{E^{\ur}, f'} K \simeq  \Mur_K .
\end{eqnarray*}

Note that the image of $ \U_\alpha^K \times_{K, \wt f} K $ via $i_f$ in $\Mur_K$ is 
the root subgroup  $\U_{\alpha_f}^K$ of $\Mur_K$ attached to the root $\alpha_f$. Thus $x_{\alpha_f}^K$ factors through the root subgroup $\U_{\alpha_f}^K$. 
\begin{lem}\label{lem:Chevalley-Steinberg-Mur}
	The set $\{x_{\alpha_f}^K : \bbG_a \ra \U_{\alpha_f}^K \}_{\alpha_f \in \Phi^{\Mur}}$ forms a Chevalley--Steinberg system of $\Mur$ with respect to $\TMur$.
\end{lem}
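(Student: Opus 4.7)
The plan is to verify each of the defining axioms of a Chevalley--Steinberg system as recalled in \cite[\S 2.1]{Fin}: namely, that each $x_{\alpha_f}^K$ is a $K$-group scheme isomorphism onto the root subgroup $\U_{\alpha_f}^K$, that it is defined over the fixed field $K_{\alpha_f}$ of $\Stab_{\Gal(K/F^{\ur})}(\alpha_f)$ in $K$, that the Chevalley commutation relations hold among the $x_{\alpha_f}^K$, and that the Steinberg conditions relating Galois-conjugate roots are satisfied.

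The first axiom is essentially built into the construction: $x_\alpha^K$ is by hypothesis an isomorphism onto $\U_\alpha^K$, its base change along $\wt f \colon K \to K$ is still an isomorphism onto $\U_\alpha^K \times_{K,\wt f} K \subset \Gur \times_{E^{\ur},f} K$, and the closed immersion $i_f$ identifies this factor with the root subgroup $\U_{\alpha_f}^K \subset \Mur_K$ by the description preceding the lemma. Hence $x_{\alpha_f}^K$ is an isomorphism of $K$-group schemes onto $\U_{\alpha_f}^K$.

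For the field of definition, I would first describe the action of $\Gal(K/F^{\ur})$ on the index set $\Hom_{F^{\ur}}(E^{\ur},K)$ by post-composition, $\sigma \cdot f = \sigma \circ f$, and identify the stabilizer of $f$ with $\wt f\, \Gal(K/E^{\ur})\, \wt f^{\,-1}$. Under the induced action on $\Phi^{\Mur}$, the stabilizer of $\alpha_f$ is then $\wt f\, \Stab_{\Gal(K/E^{\ur})}(\alpha)\, \wt f^{\,-1}$, so that $K_{\alpha_f} = \wt f(K_\alpha)$. Since $x_\alpha^K$ is defined over $K_\alpha$ by assumption, its base change along $\wt f$ is defined over $\wt f(K_\alpha) = K_{\alpha_f}$, and composition with $i_f$ preserves the field of definition.

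For the Chevalley commutation relations, the case $f \neq g$ is automatic because $i_f$ and $i_g$ have images in distinct, commuting direct factors of $\Mur_K$, while for $f = g$ the relations transport along the base change $\wt f$ from the corresponding relations for the original Chevalley--Steinberg system of $\Gur$ with respect to $\TGur$. The Steinberg condition, relating $x_{\alpha_f}^K$ and $x_{\sigma(\alpha_f)}^K$ for $\sigma \in \Gal(K/F^{\ur})$, is the main technical point: it requires checking that $\sigma$ sends $x_{\alpha_f}^K$ to $x_{\sigma(\alpha_f)}^K = x_{(\sigma' \alpha)_{\sigma \circ f}}^K$, where $\sigma'$ is the element of $\Gal(K/E^{\ur})$ obtained from $\sigma$ after renormalizing by the lifts $\wt{f}$ and $\wt{\sigma \circ f}$. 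The expected obstacle is precisely this bookkeeping: one must verify that the ambiguity in the choice of lifts $\wt f$ is absorbed by the Steinberg condition already satisfied by $\{x_\alpha^K\}$ on $\Gur$, and that the permutation action of $\sigma$ on factors combined with its internal action on each factor matches the construction of $x_{\alpha_f}^K$. Once this compatibility is spelled out, the Steinberg condition for $\Mur$ reduces to that for $\Gur$, completing the proof.
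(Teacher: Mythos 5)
Your proposal is correct and follows essentially the same route as the paper's proof: identify the $\Gal(K/F^{\ur})$-action on $\Phi^{\Mur}$ via the decomposition $\gamma\widetilde f=\widetilde{f'}\gamma_0$ with $\gamma_0\in\Gal(K/E^{\ur})$, deduce $K_{\alpha_f}=\widetilde f(K_\alpha)$, and reduce the Galois-equivariance conditions for $\{x_{\alpha_f}^K\}$ to those already satisfied by $\{x_\alpha^K\}$. The one point you gloss over, which the paper treats explicitly, is the case split between $\alpha_f$ with non-divisible versus divisible restriction to $\SMur$: the latter requires tracking the sign $\epsilon=\pm 1$ in the Steinberg condition, and the paper verifies that $\gamma$ acts as the identity on $K_{\beta_f}$ iff $\gamma_0=\widetilde f^{-1}\gamma\widetilde f$ acts as the identity on $K_\beta$, so that the sign is inherited correctly from $\Gur$; your outline asserts the reduction works but does not spell out this detail.
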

\begin{proof}
	Let $\alpha_f \in \Phi^{\Mur}$, i.e. $\alpha \in \Phi$ and $f \in \Hom_{F^{\ur}}(E^{\ur},K)$.  For  $\gamma \in \Gal(K/F^{\ur})$, we can write $\gamma \wt f =\wt f' \gamma_0 $ for some $f' \in \Hom_{F^{\ur}}(E^{\ur},K)$ and $\gamma_0 \in \Gal(K/E^{\ur})$. Then we have $\gamma(\alpha_f)=(\gamma_0(\alpha))_{f'}$. Hence the fixed field $K_{\alpha_f}$ of the stabilizer $\Stab_{\Gal(K/F^{\ur})}(\alpha_f)$ of $\alpha_f$ is $\wt f K_\alpha \wt f^{-1}$. Since $x_\alpha$ is defined over $K_\alpha$, we deduce from the  construction of $x_{\alpha_f}^K$ that $x_{\alpha_f}^K$ is defined over $K_{\alpha_f}$. 
	We distinguish two cases. 
	
	Case 1: The restriction of $\alpha_f$ to $\Phi(\Mur, \SMur)$ is not divisible. In this case it remains to check that for all  $\gamma \in \Gal(K/F^{\ur})$, we have $x_{\gamma(\alpha_f)}^K=\gamma \circ x_{\alpha_f}^K \circ \gamma^{-1}$. Write $\gamma =\wt f' \gamma_0 \circ \wt f^{-1}$ for some $f' \in \Hom_{F^{\ur}}(E^{\ur},K)$ and $\gamma_0 \in \Gal(K/E^{\ur})$. Note that the restriction of $\alpha$ to $\Phi(\Gur, \SGur)$  agrees with the restriction of $\alpha_f$ to $\Phi(\Mur, \SMur)$ under the above identification of $\Phi(\Gur, \SGur)$ with $\Phi(\Mur, \SMur)$. Thus the restriction of $\alpha$ to $\Phi(\Gur, \SGur)$ is non-divisible, and since $\{x_\alpha^K\}_{\alpha \in \Phi}$ form a Chevalley--Steinberg system, we have  $x_{\gamma_0(\alpha)}^K=\gamma_0 \circ x_\alpha^K \circ \gamma_0^{-1}$. Thus we obtain
	$$ \gamma \circ x_{\alpha_f}^K \circ \gamma^{-1} = \wt f' \circ \gamma_0 \circ x_{\alpha_{id}}^K \circ \gamma_0^{-1} \wt (f')^{-1} =  \wt f' \circ x_{\gamma_0(\alpha)_{id}}^K \circ (f')^{-1}= x_{\gamma_0(\alpha)_{f'}}^K=x_{\gamma(\alpha_f)}^K.$$
	
	Case 2: The restriction of $\alpha_f$ to $\Phi(\Mur, \SMur)$ is divisible. Hence the restriction of $\alpha$ to $\Phi(\Gur, \SGur)$ is divisible and there exist $\beta$ and $\beta'$ with $\alpha=\beta+\beta'$, $\beta|_{\SGur}=\beta'|_{\SGur}$, and $K_\beta=K_{\beta'}$ is a quadratic extension of $K_\alpha$. Hence $\alpha_f=\beta_f+\beta'_f$ and $K_{\beta_f}=\wt fK_{\beta}\wt f^{-1}$ is a quadratic extension of $K_{\alpha_f}=\wt fK_{\alpha}\wt f^{-1}$. It remains to show that for $\gamma \in \Gal(K/K_{\alpha_f})$, we have 
	\begin{equation}\label{eqn-Chevalley-Steinberg-iii} x_{\gamma(\alpha_f)}^K=\gamma \circ x_{{\alpha_f}}^K \circ \gamma^{-1} \circ \epsilon ,
	\end{equation}
	where $\epsilon \in \{\pm 1\}$ is 1 if and only if $\gamma$ induces the identity on $K_{{\beta_f}}$. Note that if we write $\gamma=\wt f \gamma_0 \wt f^{-1}$ with $\gamma_0 \in \Gal(K/K_\alpha)$, then $\gamma$ induces the identity on $K_{{\beta_f}}$ if and only if $\gamma_0$ induces the identity on $K_{{\beta}}$. Hence the desired identify \eqref{eqn-Chevalley-Steinberg-iii} follows from the property $x_{\gamma_0(\alpha)}^K=\gamma_0 \circ x_{{\alpha}}^K \circ \gamma_0^{-1} \circ \epsilon$ of the Chevalley--Steinberg system $\{x_\alpha^K\}_{\alpha \in \Phi}$. 
\end{proof}

Recall that following \cite{BT2} we obtain a parametrization of root groups from our Chevalley--Steinberg systems. More precisely, let $a \in \Phi(\Gur,\SGur)$ and fix $\alpha \in \Phi$  such that $\alpha|_{\SGur}=a$. Recall that $x_\alpha$ is defined over $K_\alpha$ by the properties of a Chevalley--Steinberg system.
If $a$ is not multipliable, then 
$$x_a:= \fR_{K_\alpha/E^{\ur}} x_\alpha^K:\fR_{K_\alpha/E^{\ur}} \bbG_a \xrightarrow{\simeq} \UGur_a$$ 
is the parametrization of $\UGur_a$ corresponding to the Chevalley--Steinberg system.    
If $a$ is multipliable, then let $\wt \alpha \in \Phi$ such that $\wt \alpha|_{\SGur}=a$ and $\alpha+\wt \alpha \in \Phi$. Using $x_\alpha^F, x_{\wt \alpha}^F$ and $x_{\alpha+\wt \alpha}^F$, following \cite[4.1.9]{BT2} (see also \cite[Section~2.2]{Fin} for an exposition) we obtain a parametrization 
$$x_a:\fR_{K_{\alpha+\wt \alpha}/E^{\ur}} H_0(K_\alpha,K_{\alpha+\wt \alpha}) \xrightarrow{\simeq} \UGur_a$$
of $\UGur_a$, where $H_0(K_\alpha,K_{\alpha+\wt \alpha})$ is as defined in \cite[4.1.9]{BT2} (see also \cite[Section~2.2]{Fin}). Composing the inverse of $x_a$ with the valuation on $(\fR_{K_\alpha/E^{\ur}} \bG_a) (E^{\ur})=K_\alpha$ or with a scaling of the valuation on the second factor of $K_\alpha \times K_\alpha \supset (H_0(K_\alpha,K_{\alpha+\wt \alpha}))(K_{\alpha+\wt \alpha})$ $=(\fR_{K_{\alpha+\wt \alpha}/E^{\ur}} H_0(K_\alpha,K_{\alpha+\wt \alpha}))(E^{\ur}) $ as described in \cite[4.2.2]{BT2} (see also \cite[Section~2.2]{Fin}), we obtain a valuation
$$ \varphi_a: \UGur_a(E^{\ur}) \ra \frac{1}{2[K_\alpha:E^{\ur}]}\bZ \cup \{\infty\}$$
of $\UGur_a(E^{\ur})$.
These valuations $\{\varphi_a\}_{a \in \Phi(\Gur,\SGur)}$ determine a point $x_\varphi$ in the apartment $\sA(\SGur, E^{\ur})$ corresponding to $\SGur$, and all other points in the apartment correspond to valuations of the form $(\wt \varphi_a:\UGur_a(E^{\ur}) \ra \bR \cup \{ \infty \})_{a \in \Phi(\Gur,\SGur)}$ with $\wt \varphi_a(u)=\varphi_a(u)+a(v)$ for some $v \in X_*(\SGur)\otimes \bR$ and for all $u \in \UGur_a(E^{\ur})$, $a \in  \Phi(\Gur,\SGur)$ . 

Similarly, the Chevalley--Steinberg system $\{x_{\alpha_f}^K\}_{\alpha_f \in \Phi^{\Mur}}$ yields valuations $\{\varphi_a^{\Mur}\}_{a \in \Phi(\Mur,\SMur)}$ that determine a point $x_{\varphi^{\Mur}}$ in the apartment $\sA(\SMur, F^{\ur})$ corresponding to $\SMur$, and all other points in the apartment correspond to valuations of the form $(\wt \varphi^{\Mur}_a:\UMur_a(F^{\ur}) \ra \bR \cup \{ \infty \})_{a \in \Phi(\Mur,\SMur)}$ with $\wt \varphi^{\Mur}_a(u)=\varphi^{\Mur}_a(u)+a(v)$ for some $v \in X_*(\SMur)\otimes \bR$ and for all $u \in \UMur_a(F^{\ur})$, $a \in  \Phi(\Mur,\SMur)$. 	

Using the identification of $X^*(\SGur)$ with $X^*(\SMur)$ via $i_S$ and the resulting identification of $X_*(\SGur)$ with $X_*(\SMur)$, we obtain a bijection 
$$i_{\sA}: \sA(\SGur, E^{\ur}) \xrightarrow{\simeq} \sA(\SMur, F^{\ur})$$
by sending $\varphi_a + a(v)$ to $\varphi^{\Mur}_a +\frac{1}{e} \cdot a(v)$, where $e=[E^{\ur}:F^{\ur}]$.

Let $\iota^{\ur}: \Gur({E^{\ur}}) \xrightarrow{\simeq} \Mur({F^{\ur}})$ denote the isomorphism arising from the defining adjunction property of $\Mur$.
\begin{lem} \label{lem:depth-preservation}
	Let $x \in \sA(\SGur, E^{\ur})$ and $r \in \bR_{\geq 0}$.
	Then $$ \iota^{\ur}(\Gur(E^{\ur})_{x,er})=\Mur(F^{\ur})_{i_{\sA}(x),r} .$$
\end{lem}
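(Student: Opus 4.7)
The plan is to decompose both filtration subgroups into their torus and affine root-group contributions and check the equality piece by piece, using the identifications $\TMur = \fR_{E^{\ur}/F^{\ur}} \TGur$ and $\UMur_a = \fR_{E^{\ur}/F^{\ur}} \UGur_a$ (equation~\eqref{eqn:Us}) already established. First, I would observe that for any $r \geq 0$ the subgroup $\Gur(E^{\ur})_{x,r}$ is generated by $\TGur(E^{\ur})_r$ together with the affine root-group subgroups $\UGur_a(E^{\ur})_{x, r}$ for $a \in \Phi(\Gur, \SGur)$, and analogously for $\Mur(F^{\ur})_{i_\sA(x), r}$. Since $\iota^{\ur}$ is a topological group isomorphism, it suffices to match the torus contribution and each root-group contribution separately.

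For the torus, Lemma~\ref{lem:neron} gives $\TMur^{ft, 0} = \fR_{E^{\ur}/F^{\ur}}(\TGur^{ft, 0})$, so already in depth $0$ one obtains $\TMur(F^{\ur})_0 = \TGur(E^{\ur})_0$ via $\iota^{\ur}$. For depth $r > 0$, I would use the standard description of the torus filtration (via the Lie-algebra filtration or via congruence conditions on characters) together with the relation $\val_{F^{\ur}} = (1/e)\, \val_{E^{\ur}}$, which converts ``depth $\geq er$ in $E^{\ur}$-normalization'' into ``depth $\geq r$ in $F^{\ur}$-normalization''.

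For each root group, the non-multipliable case is essentially formal. Choose a non-multipliable lift $\alpha \in \Phi$ of $a$. Since $K_{\alpha_{id}} = \wt{id}\, K_\alpha\, \wt{id}^{-1} = K_\alpha$, the parametrization $x_a^{\Mur} = \fR_{K_\alpha/F^{\ur}}\, x_{\alpha_{id}}^K$ of $\UMur_a$ provided by Lemma~\ref{lem:Chevalley-Steinberg-Mur} agrees on $F^{\ur}$-points with the parametrization $x_a = \fR_{K_\alpha/E^{\ur}}\, x_\alpha^K$ of $\UGur_a$ on $E^{\ur}$-points: both reduce to the map $k \mapsto x_\alpha^K(k)$ for $k \in K_\alpha$. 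Consequently $\varphi_a^{\Mur}(u) = (1/e)\, \varphi_a(u)$ for all $u \in \UGur_a(E^{\ur}) = \UMur_a(F^{\ur})$, because the valuation on $K_\alpha$ normalized by $F^{\ur}$ is $1/e$ times the one normalized by $E^{\ur}$. Combined with the defining rule $i_\sA(x_\varphi + v) = x_{\varphi^{\Mur}} + v/e$, the affine valuations at $x$ and $i_\sA(x)$ satisfy $\wt\varphi_a^{\Mur}(u) = (1/e)\, \wt\varphi_a(u)$, so $\wt\varphi_a(u) \geq er$ if and only if $\wt\varphi_a^{\Mur}(u) \geq r$, yielding the equality of root-group filtrations.

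The main obstacle is the multipliable root case, where $x_a$ lives on the Bruhat--Tits subgroup $H_0(K_\alpha, K_{\alpha+\wt\alpha})$ and the valuation $\varphi_a$ is built from two valuations with potentially different scalings, as in \cite[4.1.9, 4.2.2]{BT2}. There one must check that the factor $1/e$ appears uniformly in both coordinates when replacing the $E^{\ur}$-normalizations on $K_\alpha$ and $K_{\alpha+\wt\alpha}$ by their $F^{\ur}$-normalizations. This again reduces to $\val_{F^{\ur}} = (1/e)\, \val_{E^{\ur}}$ on both fields, after which the same argument as in the non-multipliable case closes the proof.
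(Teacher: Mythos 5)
Your proposal follows essentially the same route as the paper's proof: decompose the Moy--Prasad subgroup into the torus part and the affine root-group parts, use Lemma~\ref{lem:neron} for the torus filtration, use $x_a^{\Mur}=\fR_{E^{\ur}/F^{\ur}}\,x_a$ to deduce $\varphi_a^{\Mur}\circ\iota^{\ur}|_{\UGur_a(E^{\ur})}=\frac{1}{e}\varphi_a$, and then conclude via the definition of $i_\sA$. The only difference is one of exposition: you expand the terse ``hence'' in the paper's handling of the positive-depth torus filtration by spelling out the rescaling of valuations, and you flag the multipliable-root case as the main obstacle, whereas the paper dismisses it with a ``similarly''; both points are correct and handled the same way in substance.
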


\begin{proof}
	Let $a \in \Phi(\Mur, \SMur)$, and let $\xMur_a$ denote the parametrization of $\UMur_a$ attached to the Chevalley--Steinberg system $\{x_{\alpha_f}^K\}_{\alpha_f \in \Phi^{\Mur}}$. If $a$ is non-multipliable, let $\alpha_{id} \in \Phi^{\Mur}$ such that $\alpha_{id}|_{\SMur}=a$. Then
	$$ x^{\Mur}_{a}= \fR_{K_{\alpha_{id}}/F^{\ur}} x_{\alpha_{id}}^K = \fR_{K_{\alpha}/F^{\ur}} x_{\alpha_{id}}^K =  \fR_{E^{\ur}/F^{\ur}} (\fR_{K_{\alpha}/E^{\ur}} x_{\alpha_{id}}^K) $$
	and using \eqref{eqn:Us} and the definition of $x_{\alpha_{id}}^K$ we obtain that 
	\begin{equation}\label{eqn:xMur}
	x^{\Mur}_a=\fR_{E^{\ur}/F^{\ur}} x_a .
	\end{equation}
	Similarly we observe that equation \eqref{eqn:xMur} also holds if $a$ is non-multipliable.
	Hence 
	$$ \varphi^{\Mur}_a \circ \iota^{\ur}|_{\UGur_a(E^{\ur})}=\frac{1}{e} \cdot \varphi_a .$$
	This implies that the bijection $i_\sA$ of apartments induces a bijection $e \cdot i^*_\sA$ between the set of affine roots $\Psi^{\Mur}_{F^{\ur}}$ of $\sA(\SMur, F^{\ur})$ and the affine roots $\Psi^{\Gur}_{E^{\ur}}$ of $\sA(\SGur, E^{\ur})$. Hence we have 
	$$ \iota^{\ur}\left(\< \UGur_\psi , |  \, \psi \in \Psi^{\Gur}_{E^{\ur}}, \psi(x) \geq er \>\right) = \< \UMur_\psi \, | \, \psi \in \Psi^{\Mur}_{F^{\ur}}, \psi(i_\sA(x)) \geq r \> ,$$
	where $\UGur_\psi=\{u \in \UGur_{\dot \psi}(E^{\ur}) \, | \, \varphi_{\dot \psi}(u) \geq \psi(x_\varphi)\}$ with $\dot \psi$ denoting the gradient of $\psi$, and similarly for $\UMur_\psi$.
	
	By Lemma \ref{lem:neron} we have $\iota^{\ur}(\TGur_0)=\TMur_0$, and hence $\iota^{\ur}(\TGur_{er})=\TMur_r$ for $r \in \bR_{\geq 0}$. Thus we obtain 
	\begin{eqnarray*}
		\iota^{\ur}(\Gur(E^{\ur})_{x,er})&=&\iota^{\ur}\left(\<\TGur_{er},  \UGur_\psi , |  \, \psi \in \Psi^{\Gur}_{E^{\ur}}, \psi(x) \geq er  \>\right) \\
		&=& \< \TMur_r, \UMur_\psi \, | \, \psi \in \Psi^{\Mur}_{F^{\ur}}, \psi(i_\sA(x)) \geq r \> =\Mur(F^{\ur})_{i_{\sA}(x),r} 
	\end{eqnarray*}
\end{proof}

\begin{cor} \label{corollary:depth-preservation}
	The bijection $i_\sA$ extends to a bijection $i^{\ur}_\sB: \sB(\Gur, E^{\ur}) \xrightarrow{\simeq} \sB(\Mur, F^{\ur})$ that is compatible with the action of $ \Gur({E^{\ur}}) \xrightarrow{\iota^{\ur} \, \simeq} \Mur({F^{\ur}})$ and such that for $x \in \sB(\Gur, E^{\ur})$ and $r \in \bR_{\geq 0}$ we have $$ \iota^{\ur}(\Gur(E^{\ur})_{x,er})=\Mur(F^{\ur})_{i^{\ur}_\sB(x),r} .$$
\end{cor}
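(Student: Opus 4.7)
The plan is to extend $i_\sA$ to a bijection of the full buildings equivariantly through the isomorphism $\iota^{\ur}$. Every point of $\sB(\Gur, E^{\ur})$ lies in some $\Gur(E^{\ur})$-translate of the apartment $\sA(\SGur, E^{\ur})$, and analogously every point of $\sB(\Mur, F^{\ur})$ lies in a $\Mur(F^{\ur})$-translate of $\sA(\SMur, F^{\ur})$. I would therefore define
\[
i^{\ur}_\sB(g \cdot x) := \iota^{\ur}(g) \cdot i_\sA(x) \qquad \text{for } g \in \Gur(E^{\ur}),\ x \in \sA(\SGur, E^{\ur}).
\]
Realizing $\sB(\Gur, E^{\ur})$ as the usual quotient of $\Gur(E^{\ur}) \times \sA(\SGur, E^{\ur})$ by the equivalence generated by the normalizer $N_{\Gur(E^{\ur})}(\SGur)$ and by the pointwise stabilizers of points of $\sA(\SGur, E^{\ur})$, the well-definedness of $i^{\ur}_\sB$ reduces to two compatibilities: first, that $i_\sA$ is $\iota^{\ur}$-equivariant under the normalizer action; and second, that the pointwise stabilizer of $x$ in $\Gur(E^{\ur})$ maps under $\iota^{\ur}$ into the pointwise stabilizer of $i_\sA(x)$ in $\Mur(F^{\ur})$. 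The second compatibility is a direct consequence of Lemma \ref{lem:depth-preservation} at $r = 0$, which supplies the corresponding identity for depth-zero Moy--Prasad subgroups.

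For the normalizer-equivariance, recall that $N_{\Gur(E^{\ur})}(\SGur)$ acts on $\sA(\SGur, E^{\ur})$ through an affine extension of the relative Weyl group with translation lattice extracted from $\SGur(E^{\ur})$ via the valuation on $E^{\ur}$. Under the identification $i_S$ between $X^*(\SGur)$ and $X^*(\SMur)$ together with the factor $1/e$ built into $i_\sA$, the affine actions on the two apartments correspond: the Weyl-group parts match by naturality of $i_S$, while the translation parts match because a uniformizer of $E^{\ur}$ has $F^{\ur}$-valuation $1/e$, which exactly cancels the $1/e$ scaling in the definition of $i_\sA$.

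Once $i^{\ur}_\sB$ is well-defined and $\iota^{\ur}$-equivariant (the latter by construction), it is automatically a bijection. The depth-comparison formula at an arbitrary point $g \cdot x$ then follows by conjugation: applying $\iota^{\ur}$ to the relation $\Gur(E^{\ur})_{g \cdot x,\, er} = g\,\Gur(E^{\ur})_{x,\, er}\, g^{-1}$, invoking Lemma \ref{lem:depth-preservation} at $x$, and using the analogous conjugation relation on the $\Mur(F^{\ur})$-side yields exactly $\Mur(F^{\ur})_{i^{\ur}_\sB(g \cdot x),\, r}$. The principal obstacle is the normalizer-equivariance of $i_\sA$, which is ultimately a torus-theoretic compatibility between Weil restriction of cocharacters and the valuation comparison $\val_{E^{\ur}}|_{F^{\ur}} = e \cdot \val_{F^{\ur}}$; modulo this compatibility, the rest of the argument is formal.
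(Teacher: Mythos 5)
Your proposal is correct and follows essentially the same route as the paper: realize $\sB(\Gur,E^{\ur})$ and $\sB(\Mur,F^{\ur})$ as quotients of $\Gur(E^{\ur})\times\sA(\SGur,E^{\ur})$ and $\Mur(F^{\ur})\times\sA(\SMur,F^{\ur})$ respectively, check that the equivalence relations match under $\iota^{\ur}\times i_\sA$ via the normalizer correspondence and Lemma~\ref{lem:depth-preservation} at $r=0$, then deduce the Moy--Prasad comparison at arbitrary points by conjugation. The only small difference is in how the $N_{\Gur}(\SGur)(E^{\ur})$-equivariance of $i_\sA$ is justified: you give a direct analysis of the affine action (Weyl part via naturality of $i_S$, translation part via the valuation comparison cancelling the $1/e$ scaling), while the paper infers it from the parahoric correspondence supplied by Lemma~\ref{lem:depth-preservation} itself; both work, and your argument makes the mechanism more explicit.
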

It was pointed out to us that the isomorphisms between the buildings $\sB(\Gur, E^{\ur})$ and $\sB(\Mur, F^{\ur})$ has already been observed by \cite[Proposition~4.6]{HR} without the statement about the comparison of the Moy--Prasad filtration subgroups. 
\begin{proof}[Proof of Corollary \ref{corollary:depth-preservation}]
	We have a bijection $\iota^{\ur} \times i_\sA: \Gur(E^{\ur}) \times \sA(\SGur, E^{\ur}) \ra \Mur(F^{\ur}) \times \sA(\SMur, F^{\ur})$ and we will show that it descends to a bijection $i^{\ur}_\sB:  \sB(\Gur, E^{\ur}) \xrightarrow{\simeq} \sB(\Mur, F^{\ur})$. Recall that the equivalence relation on $\Gur(E^{\ur}) \times \sA(\SGur, E^{\ur})$ that defines $\sB(\Gur, E^{\ur})$ is given by $(g_1,x_1) \sim (g_2,x_2)$ if and only if there exists $n \in N_{\Gur}(\SGur)(E^{\ur})$ such that $x_2=n.x_1$ and $g_1^{-1}g_2n \in \Gur(E^{\ur})_{x_1,0}$, where $N_{\Gur}(\SGur)$ denotes the normalizer of  $\SGur$ in $\Gur$. We have analogous relations defining $ \sB(\Mur, F^{\ur})$. Note that $i^{\ur}(N_{\Gur}(\SGur)(E^{\ur}))=N_{\Mur}(\SMur)(F^{\ur})$ and $\iota^{\ur}(\Gur(E^{\ur})_{x,0})=\Mur(F^{\ur})_{i_{\sA}(x),0}$ for $x \in \sA(\SGur, E^{\ur})$ by Lemma \ref{lem:depth-preservation}. Thus $i_\sA$ is equivariant under the action of $(N_{\Gur}(\SGur)(E^{\ur})) \xrightarrow{\simeq} N_{\Mur}(\SMur)(F^{\ur})$, and the equivalence relation on  $ \Gur(E^{\ur}) \times \sA(\SGur, E^{\ur})$ defining $\sB(\Gur, E^{\ur})$ corresponds under $\iota^{\ur} \times i_\sA$ to the equivalence relation on $ \Mur(F^{\ur}) \times \sA(\SMur, F^{\ur})$ defining $\sB(\Mur, F^{\ur})$. The corollary follows.
\end{proof}

This concludes our study of the Moy--Prasad filtration subgroups over maximal unramified extensions. We will now employ \'etale descent to obtain the desired results over our local fields $E$ and $F$.
We write $E_{\ur}$ for the maximal unramified extension of $F$ contained in $E$. Then we have $E \otimes_{E_{\ur}} F^{\ur} = E^{\ur}$ and every $ f \in \Hom_F(E_{\ur}, F^{\ur})$ yields an element of $\Hom_F(E, E^{\ur})$ that we also denote by $f$. Thus we obtain
\begin{equation}\label{eqn:M-M} M \times_F F^{\ur} = \fR_{E/F} G \times_F F^{\ur} \simeq \prod_{f \in \Hom_F(E_{\ur},F^{\ur})} \fR_{E^{\ur}/F^{\ur}} (G \times_{E, f} E^{\ur}). \end{equation}
Hence 
$$ \sB(M, F)=\sB(M \times_F F^{\ur},F)^{\Gal(F^{\ur}/F)}$$
with $$\sB(M \times_F F^{\ur},F) = \prod_{f \in \Hom_F(E_{\ur},F^{\ur})} \sB(\fR_{E^{\ur}/F^{\ur}} (G \times_{E, f} E^{\ur}), F^{\ur}). $$
By composing the latter product with the projection onto the factor corresponding to $f=1$, we obtain a bijection
\begin{eqnarray*}
	i_{\sB,M,\Mur}: \sB(M,F) & \xrightarrow{\simeq} & \sB(\fR_{E^{\ur}/F^{\ur}} (G \times_E E^{\ur}), F^{\ur})^{\Gal(F^{\ur}/E_{\ur})} = \sB(\Mur, F^{\ur})^{\Gal(F^{\ur}/E_{\ur})} .
\end{eqnarray*}
Similarly, composing Equation \eqref{eqn:M-M} with the projection onto the factor corresponding to $f=1$, we obtain
an isomorphism 
\begin{eqnarray*}
	\iota_{M,\Mur}: M(F) \xrightarrow{\simeq} \Mur(F^{\ur})^{\Gal(F^{\ur}/E_{\ur})} 
\end{eqnarray*}
such that for $x \in \sB(M,F)$ and $r \in \bR_{\geq 0}$ we have
\begin{equation} \label{eqn:M-Mx}
M(F)_{x,r}=((M \times_F F^{\ur})(F^{\ur})_{x,r})^{\Gal(F^{\ur}/F)}=\iota_{M,\Mur}^{-1}\left((\Mur(F^{\ur})_{i_{\sB,M,\Mur}(x),r})^{\Gal(F^{\ur}/E_{\ur})}\right).
\end{equation}

\begin{defn} \label{def:isB}
	We let 
	$$i_\sB: \sB(G,E) \xrightarrow{\simeq} \sB(M,F)$$ denote the bijection obtained as the
	composition of the restriction of $i^{\ur}_\sB$ to  $\sB(G,E) $:
	$$i^{\ur}_\sB: \sB(G,E) = \sB(\Gur, E^{\ur})^{\Gal(E^{\ur}/E)}  \xrightarrow{\simeq} \sB(\Mur, F^{\ur})^{\Gal(F^{\ur}/E_{\ur})} $$
	with $i_{\sB,M,\Mur}^{-1}$ .
\end{defn}
Recall that $\iota: G({E}) \xrightarrow{\simeq} M({F})$ denotes the isomorphism arising from the defining adjunction property of $M=\fR_{E/F}\, G$. Then we obtain the following result.
\begin{prop} \label{prop:depth-MP}
	Let $x \in \sB(G, E)$ and $r \in \bR_{\geq 0}$. Then
	$$ \iota(G(E)_{x,er})=M(F)_{i_{\sB}(x),r} .$$
\end{prop}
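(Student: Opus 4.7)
The plan is to combine Corollary~\ref{corollary:depth-preservation}, which handles the depth-comparison over the maximal unramified extensions, with Galois descent to pass from $E^{\ur}/F^{\ur}$ down to $E/F$. All three objects we need over the ground fields---the group isomorphism $\iota$, the building identification $i_\sB$, and the Moy--Prasad filtration $M(F)_{\cdot,r}$---have already been described as $\Gal(F^{\ur}/E_{\ur})$-invariants of their unramified counterparts, so the strategy is simply to apply the unramified identity and then take invariants on both sides.

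First I would record the descent identifications. Restriction of field automorphisms yields $\Gal(F^{\ur}/E_{\ur})\xrightarrow{\simeq}\Gal(E^{\ur}/E)$, and under this identification the defining adjunction property of Weil restriction produces two compatibility squares: on points, $\iota_{M,\Mur}\circ\iota=\iota^{\ur}|_{G(E)}$; on buildings, $i_{\sB,M,\Mur}\circ i_\sB=i^{\ur}_\sB|_{\sB(G,E)}$, which is precisely Definition~\ref{def:isB}. Crucially, the Moy--Prasad filtration is compatible with unramified base change, so for any $x\in\sB(G,E)\subset\sB(\Gur,E^{\ur})$ one has $G(E)_{x,er}=(\Gur(E^{\ur})_{x,er})^{\Gal(E^{\ur}/E)}$, and the analogous identity on the $M$-side is exactly formula~\eqref{eqn:M-Mx}.

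With this setup, I would apply Corollary~\ref{corollary:depth-preservation} at the fixed $x\in\sB(G,E)$ to obtain
\[
\iota^{\ur}(\Gur(E^{\ur})_{x,er})=\Mur(F^{\ur})_{i^{\ur}_\sB(x),r}
\]
inside $\Mur(F^{\ur})$, and then take $\Gal(F^{\ur}/E_{\ur})$-invariants of both sides. Since $\iota^{\ur}$ is Galois-equivariant under the identification $\Gal(F^{\ur}/E_{\ur})\simeq\Gal(E^{\ur}/E)$, the descent statement for the $G$-side turns the left-hand side into $\iota^{\ur}(G(E)_{x,er})=\iota_{M,\Mur}(\iota(G(E)_{x,er}))$. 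On the right, substituting $i^{\ur}_\sB(x)=i_{\sB,M,\Mur}(i_\sB(x))$ and applying $\iota_{M,\Mur}^{-1}$, formula~\eqref{eqn:M-Mx} identifies the resulting set with $M(F)_{i_\sB(x),r}$. Equating the two sides gives the proposition.

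The only delicate point is justifying the unramified descent statement $G(E)_{x,er}=(\Gur(E^{\ur})_{x,er})^{\Gal(E^{\ur}/E)}$ together with the $\Gal(F^{\ur}/E_{\ur})$-equivariance of $\iota^{\ur}$ under the restriction isomorphism; these are standard for unramified descent in Bruhat--Tits theory (the Moy--Prasad subgroups being constructed Galois-equivariantly over $E^{\ur}$, and $\iota^{\ur}$ arising as the base change to $F^{\ur}$ of the adjunction that defines $\iota$ over $F$), but this is where bookkeeping is required. Once it is in hand, the proposition follows formally from Corollary~\ref{corollary:depth-preservation} by taking invariants.
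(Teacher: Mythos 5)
Your argument is correct and follows essentially the same route as the paper: both proofs combine Corollary~\ref{corollary:depth-preservation} with the \'etale-descent identity~\eqref{eqn:M-Mx} (and its counterpart $G(E)_{x,er}=(\Gur(E^{\ur})_{x,er})^{\Gal(E^{\ur}/E)}$ for $G$), the only difference being that the paper chain-substitutes starting from $\iota(G(E)_{x,er})$ while you apply the unramified identity first and then take Galois invariants of both sides. The descent facts you flag as requiring bookkeeping are also taken as given in the paper, so nothing is missing.
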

\begin{proof}
	Combining Corollary \ref{corollary:depth-preservation} and Equation \eqref{eqn:M-Mx} we obtain
	\begin{eqnarray*}
		\iota(G(E)_{x,er}) & = & \iota\left((\Gur(E^{\ur})_{x,er})^{\Gal(E^{\ur}/E)}\right) = \iota_{M,\Mur}^{-1}\iota^{\ur}\left((\Gur(E^{\ur})_{x,er})^{\Gal(E^{\ur}/E)}\right)		\\
		& = & \iota_{M,\Mur}^{-1}\left((\Mur(F^{\ur})_{i^{\ur}_\sB(x),r})^{\Gal(F^{\ur}/E_{\ur})}\right)=    M(F)_{i_{\sB,M,\Mur}^{-1}(i^{\ur}_\sB(x)),r}  \\
		& = & M(F)_{i_{\sB}(x),r}
	\end{eqnarray*} 
\end{proof}

As an immediate corollary we deduce the following result.
\begin{cor} \label{ethm} Let $(\pi,V_\pi)$ be an irreducible smooth complex representation of $M(F)$. Then
\begin{eqnarray}\label{iota}
\dep(\iota^* \pi) = e(\dep (\pi)),
\end{eqnarray}
where $\iota^* \pi$ denotes the composition of $\iota$ with $\pi$ and $\dep(\cdot)$ denotes the depth of the corresponding representation.
\end{cor}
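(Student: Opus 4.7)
The result is an immediate consequence of Proposition \ref{prop:depth-MP}, so the plan is essentially to unwind the definition of depth and apply that proposition. First I would recall Definition \ref{defn:depth}: the depth of an irreducible smooth representation is the infimum over $r \geq 0$ for which some point in the building admits a nonzero subspace fixed by the Moy--Prasad subgroup at level $r+$. Since $\iota^*\pi$ and $\pi$ act on the same underlying space $V_\pi$, the comparison of depths reduces to comparing the two collections of Moy--Prasad filtration subgroups and their fixed vectors.

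Next, I would translate the $r+$ filtration using Proposition \ref{prop:depth-MP}. For $y \in \sB(G,E)$ and $r \geq 0$, writing
\[
G(E)_{y,er+} \;=\; \bigcup_{s>r} G(E)_{y,es},
\]
the proposition gives $\iota(G(E)_{y,es}) = M(F)_{i_\sB(y),s}$ for each $s>r$, so taking the union on both sides yields
\[
\iota(G(E)_{y,er+}) \;=\; M(F)_{i_\sB(y),r+}.
\]
Using this identity, vectors in $V_\pi$ fixed by $\iota^*\pi\big(G(E)_{y,er+}\big)$ coincide with vectors fixed by $\pi\big(M(F)_{i_\sB(y),r+}\big)$.

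Finally, I would package the two depth infima through the bijection $i_\sB \colon \sB(G,E) \to \sB(M,F)$. Because $i_\sB$ is a bijection on buildings, the condition ``there exists $x \in \sB(M,F)$ with $V_\pi^{M(F)_{x,r+}} \neq 0$'' is equivalent to ``there exists $y \in \sB(G,E)$ with $V_{\iota^*\pi}^{G(E)_{y,er+}} \neq 0$''. Substituting $r' = er$ in the definition of $\dep(\iota^*\pi)$ then gives $\dep(\iota^*\pi) = e \cdot \dep(\pi)$.

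The main (and really the only) obstacle has already been surmounted in the appendix: identifying the buildings via $i_\sB$ and establishing the precise scaling of the Moy--Prasad filtration under $\iota$. Once Proposition \ref{prop:depth-MP} is in hand, the corollary is a formal bookkeeping exercise with $r+$ filtrations and does not require any further input.
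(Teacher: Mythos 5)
Your proof is correct and follows the same route as the paper: the paper simply states that the corollary follows from Proposition \ref{prop:depth-MP} together with the bijectivity of $i_\sB$, and your argument is exactly the unwinding of that statement via the definition of depth and the passage from the $r$-filtration to the $r+$-filtration by taking unions.
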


\begin{proof} This follows from Proposition \ref{prop:depth-MP} and the fact that $i_\sB$ is a bijection between $\sB(G,E)$ and $\sB(M,F)$.
\end{proof}

\markright{\textsc{References}}
\markleft{\textsc{References}}

\end{document}